\newtheorem{definition}{Definition}[section]
\newtheorem{lemma}[definition]{Lemma}
\newtheorem{proposition}[definition]{Proposition}
\newtheorem{theorem}[definition]{Theorem}
\newtheorem{remark}[definition]{Remark}
\newtheorem{openquestion}[definition]{Open Question}
\newcommand{\N}{\mathbb{N}}
\newcommand{\Z}{\mathbb{Z}}
\newcommand{\R}{\mathbb{R}}
\newcommand{\zero}{{\boldsymbol{0}}}
\newcommand{\UN}{{\boldsymbol{1}}}
\newcommand{\ba}{{\boldsymbol{a}}}
\newcommand{\be}{{\boldsymbol{e}}}
\newcommand{\bh}{{\boldsymbol{h}}}
\newcommand{\bk}{{\boldsymbol{k}}}
\newcommand{\bn}{{\boldsymbol{n}}}
\newcommand{\bp}{{\boldsymbol{p}}}
\newcommand{\bq}{{\boldsymbol{q}}}
\newcommand{\Bm}{{\boldsymbol{m}}} % because \bm already used
\newcommand{\bu}{{\boldsymbol{u}}}
\newcommand{\bv}{{\boldsymbol{v}}}
\newcommand{\bx}{{\boldsymbol{x}}}
\newcommand{\by}{{\boldsymbol{y}}}
\newcommand{\balpha}{{\boldsymbol{\alpha}}}
\newcommand{\bbeta}{{\boldsymbol{\beta}}}
\newcommand{\Acal}{\mathcal{A}} %REDUNDANT
\newcommand{\Bcal}{\mathcal{B}}
\newcommand{\Ccal}{\mathcal{C}}
\newcommand{\Fcal}{\mathcal{F}}
\newcommand{\Ical}{\mathcal{I}}
\newcommand{\Jcal}{\mathcal{J}}
\newcommand{\Lcal}{\mathcal{L}}
\newcommand{\Pcal}{\mathcal{P}}
\newcommand{\Scal}{\mathcal{S}}
\newcommand{\Tcal}{\mathcal{T}}
\newcommand{\Ucal}{\mathcal{U}}
\newcommand{\Vcal}{\mathcal{V}}
\newcommand{\Wcal}{\mathcal{W}}
\newcommand{\Xcal}{\mathcal{X}} %SHOULD THIS REPRESENT SUBSHIFTS?
\newcommand{\Zcal}{\mathcal{Z}}
\newcommand{\factorialclosure}[1]{{\overline{#1}^{Fact}}}
\newcommand{\shiftclosure}[1]{{\overline{#1}^{\sigma}}}
\newcommand{\topologicalclosure}[1]{{\overline{#1}}}
\newcommand{\dist}{\operatorname{dist}}
\newcommand{\shape}{\textsc{shape}}
\newcommand{\Zrange}[1]{\llbracket0,#1\rrbracket}
\newcommand{\ZZrange}[2]{{\llbracket#1,#2\rrbracket}}
\newcommand{\height}{\textsc{height}}
\newcommand{\width}{\textsc{width}}
\newcommand{\RecurrentVertices}{\textsc{RecurrentVertices}}
\newcommand{\SFT}{\textsc{SFT}}
\newcommand{\interior}[1]{\accentset{\circ}{#1}}
\newcommand{\sctop}{\textsc{top}}
\newcommand{\scbottom}{\textsc{bottom}}
\newcommand{\scleft}{\textsc{left}}
\newcommand{\scright}{\textsc{right}}
\newcommand{\generictorus}{{\boldsymbol{T}}}
\newcommand{\torus}{\mathbb{T}}
\newcommand{\scConfig}{\textsc{Config}}
\newcommand{\sccode}{\textsc{Code}}
\newcommand{\scReturnWord}{\textsc{ReturnWord}}
\newcommand\tile[4]{
    \raisebox{-3mm}{
\begin{tikzpicture}[scale=0.9]
% tile at position (x,y)=(0, 0)
\draw (0, 0) -- (1, 0);
\draw (0, 0) -- (0, 1);
\draw (1, 1) -- (1, 0);
\draw (1, 1) -- (0, 1);
\node[rotate=0,font=\footnotesize] at (0.8, 0.5) {#1};
\node[rotate=0,font=\footnotesize] at (0.5, 0.8) {#2};
\node[rotate=0,font=\footnotesize] at (0.2, 0.5) {#3};
\node[rotate=0,font=\footnotesize] at (0.5, 0.2) {#4};
\end{tikzpicture}}}
\newtcolorbox[auto counter,number within=section]{exo}[1][]{colback=green!5!white,
colframe=green!15!black,fonttitle=\bfseries,
coltitle=red!10!black,
colbacktitle=green!40!white,enhanced,
attach boxed title to top left={yshift=-2mm,xshift=2mm},
title=Exercise \thetcbcounter,{#1}}
\newtcolorbox{exosolution}[2][]{colback=yellow!5!white,
colframe=yellow!15!black,fonttitle=\bfseries,
coltitle=red!10!black,
colbacktitle=yellow!40!white,enhanced,
attach boxed title to top left={yshift=-2mm,xshift=2mm},
title=Solution to Exercise {#2},{#1},breakable}
\newtcolorbox{sagecommandlinetcb}[1][]{
    oversize,
    right=1mm,
    left=1mm,
    top=-2mm,
    bottom=-2mm,
    colback=red!5!white,
    colframe=red!75!black,
    {#1}}
\newcommand{\noncopynumber}[1]{
    \BeginAccSupp{method=escape,ActualText={}}
    #1
    \EndAccSupp{}
}
\lstdefinestyle{SageInput}{style=DefaultSageInput,basicstyle={\footnotesize\ttfamily}}
\definecolor{pblue}{rgb}{0.13,0.13,1}
\definecolor{pgreen}{rgb}{0,0.5,0}
\footnotesize\fontfamily{AnonymousPro}\selectfont,
\bfseries\color{pblue},
\itshape\color{green!40!black},
\itshape\color{black!60},
\algrenewcommand\algorithmicrequire{\textbf{Precondition:}}
\algrenewcommand\algorithmicensure{\textbf{Postcondition:}}
\newcommand{\ifnv}[2]{\ifthenelse{\equal{#1}{}}{}{#2}}
\newcommand\defn[2][]{\ifnv{#1}{\index{#1|\textbf}}{\index{#2|\textbf}}\textbf{#2}}
\keywords{
aperiodic 
\and subshift
\and Sturmian
\and tiling 
\and substitution 
\and self-similar 
\and SFT 
\and Markov partition 
\and self-induced 
\and coding of rotations
\and Rauzy induction 
\and polyhedron exchange transformation}
\subjclass[2020]{Primary 37B51; Secondary 52C23, 28D05}
\thanks{
The author acknowledges financial support from 
the Agence Nationale de la Recherche through the projects 
CODYS (ANR-18-CE40-0007),
PARADIS (ANR-18-CE23-0007-01) and
IZES (ANR-22-CE40-0011).}
\begin{document}

% Adresse LaBRI selon
% https://www.labri.fr/index.php?n=LaBRI.HowToSigne

\title[Three characterizations of a self-similar aperiodic 2-dimensional subshift]
      {Three characterizations of\\a self-similar aperiodic 2-dimensional subshift}
\author[S.~Labb\'e]{S\'ebastien Labb\'e}
\address[S.~Labb\'e]{Univ. Bordeaux, CNRS,  Bordeaux INP, LaBRI, UMR 5800, F-33400, Talence, France}
\email{sebastien.labbe@labri.fr}

%\date{\today}

\begin{abstract}
The goal of this chapter is to illustrate a generalization of the Fibonacci
word to the case of 2-dimensional configurations on $\mathbb{Z}^2$.  More
precisely, we consider a particular subshift of $\mathcal{A}^{\mathbb{Z}^2}$ on
the alphabet $\mathcal{A}=\{0,\dots,15\}$ for which we give three
characterizations: as the subshift $\mathcal{X}_\Phi$ generated by a
2-dimensional morphism $\Phi$ defined on $\mathcal{A}$; as the Wang shift
$\Omega_\mathcal{Z}$ defined by a set $\mathcal{Z}$ of 16 Wang tiles; as the
symbolic dynamical system $\mathcal{X}_{\mathcal{P}_\mathcal{Z},R_\mathcal{Z}}$
representing the orbits under some $\mathbb{Z}^2$-action $R_\mathcal{Z}$
defined by rotations on $\mathbb{T}^2$ and coded by some topological partition
$\mathcal{P}_\mathcal{Z}$ of $\mathbb{T}^2$ into 16 polygonal atoms.  We prove
their equality $\Omega_\mathcal{Z}
=\mathcal{X}_\Phi=\mathcal{X}_{\mathcal{P}_\mathcal{Z},R_\mathcal{Z}}$ by
showing that they are self-similar with respect to the substitution $\Phi$.

This chapter provides a transversal reading of results divided into four
different articles obtained through the study of the Jeandel-Rao Wang shift.
It gathers in one place the methods introduced to desubstitute Wang shifts and
to desubstitute codings of $\mathbb{Z}^2$-actions by focussing on a simple
2-dimensional self-similar subshift. SageMath code to find marker tiles and
compute the Rauzy induction of $\mathbb{Z}^2$-rotations is provided 
allowing to reproduce the computations. The chapter contains many exercises
whose solutions are provided at the end.
\end{abstract}

\maketitle

\setcounter{tocdepth}{1}
\tableofcontents

%\listoftodos

% \todo[inline]{
%     three main TODOs:
%         (1) fix the main exo
%         (2) 19 to 16
%         (3) add solutions to exercises
%     }

\newpage
\section{Introduction}\label{chap:Labbe:sec:Introduction}

The rule
$s:a\mapsto ab, b\mapsto a$
defines a morphism on the monoid $\{a,b\}^*$.
The successive application of this morphism on the letter $a$
defines longer and longer words covering the negative and non-negative
integers:
\[
\begin{array}{r|l}
    s^0(a)&s^0(a)\\
    s^1(a)&s^1(a)\\
    s^2(a)&s^2(a)\\
    s^3(a)&s^3(a)\\
    s^4(a)&s^4(a)\\
    s^5(a)&s^5(a)\\
\end{array}
\qquad
=
\qquad
\begin{array}{r|l}
    \underline{a}& 
    a\\
    \underline{ab}& 
    ab\\
    a\underline{ba}& 
    aba\\
    aba\underline{ab}& 
    abaab\\
    abaaba\underline{ba}& 
    abaababa\\
    abaababaaba\underline{ab}& 
    abaababaabaab\\
\end{array}
\]
The letters that change from line to line are underlined.
It is an interesting exercise to show that at the limit, we obtain
$\lim_{n\to\infty}s^{2n}(a)|s^{2n}(a)= \widetilde{F}ba|F$
and
$\lim_{n\to\infty}s^{2n+1}(a)|s^{2n+1}(a)= \widetilde{F}ab|F$
where $F$ is the well-known right-infinite Fibonacci word \cite{berstel_mots_1980}.
The rule $s$ can be seen as a substitution that we may apply on the biinfinite words
$x=\widetilde{F}ba|F$
and $y=\widetilde{F}ab|F$ and we observe that
$s(x)=y$ and $s(y)=x$.
Thus $s^2(x)=x$ and $s^2(y)=y$ and we say that
$x$ and $y$ are fixed points of $s^2$.
The set of finite words that appear in $x=s^2(x)$ defines a language
\[
    \Lcal_s = \{\varepsilon, a, b, aa, ab, ba, aab, aba, baa, bab, aaba, \dots\}\subset\{a,b\}^*
\]
and a subshift
\[
    \Xcal_s = \{u\in\{a,b\}^\Z\colon\text{the finite words that appear in }u\text{ are in }\Lcal_s\}.
\]
The subshift $\Xcal_s$ contains $x$, $y$, all shifts of $x$ and $y$, and much more.
Indeed, $\Xcal_s$ is a Sturmian shift which is an uncountable set.
The reader will find detailed information on Sturmian sequences in
\cite[Chapter 2]{MR1905123} and \cite[Chapter 6]{MR1970385}.
%\cite{MR1970391}.
In particular, $\Xcal_s$ is aperiodic, that is, it is nonempty and none of the
sequences in $\Xcal_s$ is periodic.

It is known since the early work of Morse and Hedlund in \cite{MR0000745} 
and Coven and Hedlund in \cite{MR0322838}
that the 1-dimensional subshift $\Xcal_s$,
being a Sturmian subshift,
has many equivalent characterizations:
\begin{itemize}
    \item as the subshift generated by the 1-dimensional substitution $s$;
    \item as the subshift on $\{a,b\}$ having exactly $n+1$ factors of length $n$ and
        such that the ratio of the frequency of the two letters is $\varphi=\frac{1+\sqrt{5}}{2}$;
    \item as the symbolic representation of a rotation on the $1$-dimensional
        torus $\torus=\R/\Z$ through a partition into two intervals
        whose length ratio is the golden mean.
\end{itemize}

\begin{sagesilent}
from slabbe import Substitution2d
Phi = Substitution2d({0: [[14]], 1: [[13]], 2: [[12],[10]],
        3: [[11],[8]], 4: [[14],[7]], 5: [[13],[7]], 6: [[12],[7]],
        7: [[12,6]], 8: [[14,3]], 9: [[13,3]], 10: [[12,2]],
        11: [[12,6],[10,1]], 12: [[11,6],[8,1]], 13: [[15,5],[9,1]],
        14: [[11,4],[8,1]], 15: [[12,2],[7,0]]})
from slabbe import WangTileSet
L = ["DOJO", "DOHL", "JMDP", "DMDK", "HPJP", "HPHN", "HKDP",
     "BOIO", "ILEO", "ILCL", "ALIO", "EPIP", "IPIK",
     "IKBM", "IKAK", "CNIP"]
L = [tuple(tile) for tile in L]
Z = WangTileSet(L)
from slabbe.arXiv_1903_06137 import self_similar_19_atoms_partition
PU = self_similar_19_atoms_partition()
merge_dict = {0:0, 1:1, 2:2, 3:3, 4:4, 5:5, 6:6, 7:6, 8:7, 9:8, 10:9, 
              11:10, 12:11, 13:11, 14:12, 15:12, 16:13, 17:14, 18:15}
PZ = PU.merge_atoms(merge_dict)
load('axis_labels.sage')
axis0 = axis_HV()
PZtikz = PZ.tikz(extra_code=axis0, fontsize=r'\normalsize', scale=5)
\end{sagesilent}

The goal of this chapter is to illustrate a generalization of the above
to the case of 2-dimensional configurations on $\Z^2$.
More precisely, we consider a particular subshift of $\Zrange{15}^{\Z^2}$,
first considered in \cite{lepsova_thesis_2024},
for which we give three characterizations:
\begin{itemize}
    \item as the subshift $\Xcal_\Phi$ generated by the 2-dimensional morphism
        $\Phi$ defined on the alphabet $\Acal=\Zrange{15}$ by the rule 
\begin{equation} % not enough space for label
\label{eq:definition-of-Phi} 
\begin{array}{ll}
    \Phi:&\Zrange{15}\to\Zrange{15}^{*^2}\\[2mm]
    &\left\{\footnotesize\arraycolsep=1.8pt
         \sage{Phi._latex_(ncolumns=4, align='l')}
    \right.
\end{array}
\end{equation}
    \item as the Wang shift $\Omega_\Zcal$,
        that is, the set of valid configuration $\Z^2\to\Zrange{15}$
        describing valid tiling of the plane using the following set $\Zcal$ of
        16 Wang tiles:
\begin{center}
    \sageplot[][pdf]{Z.tikz(ncolumns=8)}
\end{center}
    \item as the symbolic dynamical system $\Xcal_{\Pcal_\Zcal,R_\Zcal}$
        representing the orbits under
        the $\Z^2$-action $R_\Zcal$ defined by
        rotations on %the $2$-dimensional torus 
        $\torus^2$
        and coded by the topological partition $\Pcal_\Zcal$ of
        $\torus^2$:
    \[
    \begin{array}{c}
        \Pcal_\Zcal=
    \end{array}
    \begin{array}{c} 
        \sageplot[][pdf]{PZtikz}
    \end{array}
    \quad
    \begin{array}{l}
        R_\Zcal^\bn(\bx)=\bx+\varphi^{-2}\bn
    \end{array}
    \] 
    where $\varphi=\frac{1+\sqrt{5}}{2}$ is the golden ratio.
\end{itemize}

The reader may observe that while increasing the dimension from 1 to 2, we
replaced the second characterization of the Fibonacci subshift $\Xcal_s$ based on the factor
complexity by the notion of Wang shift or more generally subshift of finite type (SFT).
It may seem counter-intuitive since the Fibonacci subshift is aperiodic and 
1-dimensional SFT always contain a periodic
configuration \cite{MR1369092}, but this is not a contradiction in higher dimension 
since there exist aperiodic 2-dimensional SFTs \cite{MR0216954}.

In this chapter, we show that $\Omega_\Zcal$ and $\Xcal_{\Pcal_\Zcal,R_\Zcal}$
are self-similar.
The tools used in the proofs are completely different in each case: based on the
notion of marker tiles in the former case and on Rauzy induction of
$\Z^2$-rotations in the latter.
It turns out that the $2$-dimensional morphism describing the self-similarities is
$\Phi$ in both cases.

\begin{theorem}\label{thm:main-theoremA}
    {\rm \cite{lepsova_thesis_2024}}
    The Wang shift $\Omega_\Zcal\subset\Zrange{15}^{\Z^2}$ is self-similar satisfying
    $\Omega_\Zcal=\shiftclosure{\Phi(\Omega_\Zcal)}$
    where $\Phi:\Zrange{15}\to\Zrange{15}^{*^2}$
    is defined in Equation~\eqref{eq:definition-of-Phi}.
\end{theorem}

Theorem~\ref{thm:main-theoremA} was first proved in \cite{lepsova_thesis_2024}.
The set $\Zcal$ of 16 tiles was introduced in \cite{lepsova_thesis_2024}
as a simplification of the set $\Ucal$ of 19 Wang tiles introduced in
\cite{MR3978536}. Lepšová proved that $\Omega_\Zcal$ is topologically conjugate
to $\Omega_\Ucal$. Therefore, the Wang shift $\Omega_\Zcal$ is also minimal,
 aperiodic and self-similar as the same was known for $\Omega_\Ucal$.
The proof of Theorem~\ref{thm:main-theoremA} provided here is constructive
and uses the tools developed in \cite{MR4226493}.

\begin{theorem}\label{thm:main-theoremB}
    The subshift $\Xcal_{\Pcal_\Zcal,R_\Zcal}\subset\Zrange{15}^{\Z^2}$ is self-similar satisfying 
    $\Xcal_{\Pcal_\Zcal,R_\Zcal}=\shiftclosure{\Phi(\Xcal_{\Pcal_\Zcal,R_\Zcal})}$
    where $\Phi:\Zrange{15}\to\Zrange{15}^{*^2}$
    is defined in Equation~\eqref{eq:definition-of-Phi}.
\end{theorem}

The proof of Theorem~\ref{thm:main-theoremB} provided here is also constructive
and uses the tools developed in \cite{MR4347332} to perform the Rauzy induction
of toral $\Z^2$-rotations coded by polygonal partitions.

The equality of the three subshifts follows from a criterion for the minimality
of self-similar subshifts stated in Lemma~\ref{lem:criterion-for-minimality}.

    \begin{theorem}\label{thm:equality-three-subshifts}
The three subshifts are equal:
$\Xcal_\Phi
=\Omega_\Zcal
=\Xcal_{\Pcal_\Zcal,R_\Zcal}$.
    \end{theorem}

The 2-dimensional subshift $\Omega_\Ucal$ was introduced in \cite{MR3978536} and
discovered during the study of the substitutive structure
\cite{MR4226493} of the Jeandel--Rao Wang shift
\cite{MR4210631}.
Its description as the coding of a toral $\Z^2$-action was presented in
\cite{MR4213162} and its substitutive structure was further developed
in \cite{MR4347332}.
This chapter provides a transversal reading of results divided in four different
articles about Jeandel--Rao tilings and gathers the methods introduced by focussing on the 
self-similar subshift hidden in the Jeandel--Rao Wang shift, which is more simple.
Thus we avoid the difficulty raised by the
Jeandel--Rao Wang shift itself which is not a minimal subshift, has a long preperiod
in its substitutive description and needs the definition of other tools
including the shear-conjugacy.

\textbf{Structure of the chapter}.
Section~\ref{chap:Labbe:sec:Preliminaries} gathers preliminary notions on
topological dynamical systems,
subshifts and shifts of finite type and $d$-dimensional languages.
In Section~\ref{chap:Labbe:sec:aperiodic-self-similar-subshifts}, we define a
$2$-dimensional self-similar subshift $\Xcal_\Phi$ from a $2$-dimensional substitution
$\Phi$ defined on 16-letter alphabet.
We show that $\Xcal_\Phi$ is aperiodic.
In Section~\ref{chap:Labbe:sec:wang-shifts}, we introduce a Wang shift
$\Omega_\Zcal$ defined from a set $\Zcal$ of 16 Wang tiles and we show using
the notion of marker tiles that it is self-similar and $\Omega_\Zcal=\Xcal_\Phi$.
In Section~\ref{chap:Labbe:sec:Z2-rotations}, we introduce a $2$-dimensional subshift
defined as the symbolic representation of a toral $\Z^2$-rotation using a
partition of $\R^2/\Z^2$ into 16 polygons. We show that it is also self-similar
and equal to $\Xcal_\Phi$. Around 40 exercises are included in the chapter. Their solutions
are gathered at the end of the chapter in Section~\ref{sec:exosolutions}.

Algorithms to find marker tiles and compute the Rauzy induction of
$\Z^2$-rotations are provided as well as the SageMath code to reproduce the
computations.

\textbf{Acknowledgments}.
The author is thankful to Jana Lepšová for her careful reading of a preliminary
version of this chapter and to an anonymous referee for their comments leading
to significant improvements to this chapter.
All computations made in this chapter were made with
the following versions of SageMath \cite{sagemathv10.4} and optional
package \texttt{slabbe} \cite{labbe_slabbe_0_7_6_2023}:
\begin{sagecommandlinetcb}
\begin{sagecommandline}
sage: version()
'SageMath version ..., Release Date: ...'
sage: import importlib.metadata
sage: importlib.metadata.version("slabbe")
'...'
\end{sagecommandline}
\end{sagecommandlinetcb}
All outputs within red boxes in this chapter are computed directly from SageMath using
the \texttt{sagetex} package. Please contact the author if you have trouble
reproducing any of the computations. It is possible
to doctest (check all outputs) using the command
``\texttt{sage -t chapter\_doctest.sage}'' on the file provided in the archive
(166 tests, 13.20 s).

\section{Preliminaries}\label{chap:Labbe:sec:Preliminaries}

\subsection{Topological dynamical systems}

%https://www.encyclopediaofmath.org/index.php/Topological_dynamical_system

Most of the notions introduced here can be found in \cite{MR648108}.
A \defn{dynamical system} is
a triple $(X,G,T)$, where $X$ is a topological space, $G$ is a topological
group and $T$ is a continuous function $G\times X\to X$ defining a left action
of $G$ on $X$:
if $x\in X$, $e$ is the identity element of $G$ and $g,h\in G$, then using
additive notation for the operation in $G$ we have $T(e,x)=x$
and $T(g+h,x)=T(g,T(h,x))$.
In other words, if one denotes the transformation $x\mapsto T(g,x)$
by $T^g$, then $T^{g+h}=T^g T^h$.
In this work, we consider the Abelian group $G=\Z\times\Z$.

If $Y\subset X$, let $\overline{Y}$ denote the topological closure of $Y$ and
let $T(Y):=\cup_{g\in G}T^g(Y)$ denote the $T$-closure of $Y$.
Alternatively, we also use the notation $\overline{Y}^T=T(Y)$ to denote the $T$-closure of $Y$.
A subset $Y\subset X$ is \defn{$T$-invariant} if $T(Y)=Y$.
A dynamical system $(X,G,T)$ is called \defn{minimal} if $X$ does
not contain any nonempty, proper, closed $T$-invariant subset.
The left action of $G$ on $X$ is \defn{free}
if $g=e$ whenever there exists $x\in X$ such that $T^g(x)=x$.

%We adapt the definition of conjugacy of dynamical systems from
%\cite[p. 185]{MR1369092} to subshifts.

Let $(X,G,T)$ and $(Y,G,S)$ be two dynamical systems with
the same topological group $G$.
A \defn{homomorphism} $\theta:(X,G,T)\to(Y,G,S)$ is a continuous
function $\theta:X\to Y$ satisfying the commuting property
that $S^g\circ\theta=\theta\circ T^g$ for every $g\in G$.
A homomorphism $\theta:(X,G,T)\to(Y,G,S)$ is called an \defn{embedding}
if it is one-to-one, a \defn{factor map} if it is onto, and a \defn{topological
conjugacy} if it is both one-to-one and onto and its inverse map is continuous.
If $\theta:(X,G,T)\to(Y,G,S)$ is a factor map,
then $(Y,G,S)$ is called a \defn{factor} of $(X,G,T)$
and $(X,G,T)$ is called an \defn{extension} of $(Y,G,S)$.
Two dynamical systems are \defn{topologically conjugate} if there is a
topological conjugacy between them.
% Also, 
% a homomorphism $\theta:(X,G,T)\to(Y,G,S)$ is \defn{topologically
% surjective} if the range of $\theta$ is dense in $Y$, i.e.,
% $\overline{\theta(X)}=Y$, see \cite{MR2369449}.

% REMARK:
% almost one-to-one can also be defined in term of G_\delta dense, see
% http://www.scholarpedia.org/article/Minimal_dynamical_systems
% Samuel Petite: if minimal, the one-to-one at one point implies Gdelta-dense
% See also Robinson: MR1355301
% In the end, we don't need because we have much stronger.

%https://en.wikipedia.org/wiki/Measure-preserving_dynamical_system
% https://www.encyclopediaofmath.org/index.php/Metric_isomorphism

A \defn{measure-preserving dynamical system} is defined as a system
$(X,G,T,\mu,\Bcal)$, where $\mu$ is a probability measure defined on
the Borel $\sigma$-algebra $\Bcal$ of subsets of $X$,
and $T^g:X\to X$ is a measurable map
which preserves the measure $\mu$ for all $g\in G$, that is,
$\mu(T^g(B))=\mu(B)$ for all $B\in\Bcal$. The measure $\mu$ is said to be
\defn{$T$-invariant}.
In what follows, 
when it is clear from the context,
we omit the Borel $\sigma$-algebra $\Bcal$ of subsets of $X$ and write $(X,G,T,\mu)$ 
to denote a measure-preserving dynamical system.

The set of all $T$-invariant probability measures of a dynamical
system $(X,G,T)$ is denoted by $\mathcal{M}^T(X)$.
A $T$-invariant probability measure on $X$ is called \defn{ergodic} if for every set
$B\in\Bcal$ such that $T^{g}(B)=B$ for all $g\in G$, we have that $B$ has either
zero or full measure. A
dynamical system $(X,G,T)$ is \defn{uniquely ergodic}
if it has only one invariant probability measure, i.e., $|\mathcal{M}^T(X)|=1$.
One can prove that a uniquely ergodic dynamical system is ergodic.
A dynamical system $(X,G,T)$ is said \defn{strictly ergodic}
if it is uniquely ergodic and minimal.

Let $(X,G,T,\mu,\Bcal)$
and $(X',G,T',\mu',\Bcal')$ be two measure-preserving dynamical systems.
We say that the two systems are
\defn{isomorphic} %\cite[p.118]{MR2371216}
if there exist measurable sets $X_0\subset X$ and $X'_0\subset X'$
of full measure (i.e., $\mu(X\setminus X_0)=0$
and $\mu'(X'\setminus X'_0)=0$) with
$T^g(X_0)\subset X_0$, $T'^g(X'_0)\subset X'_0$ for all $g\in G$
and there exists a map $\phi:X_0\to X'_0$, called an \defn{isomorphism},
that is one-to-one and onto and such that for all $A\in\Bcal'(X'_0)$,
\begin{itemize}
    \item $\phi^{-1}(A)\in\Bcal(X_0)$,
    \item $\mu(\phi^{-1}(A))=\mu'(A)$, and
    \item $\phi\circ T^g(x)=T'^g\circ\phi(x)$ for all $x\in X_0$ and $g\in G$.
\end{itemize}
The role of the set $X_0$ is to make precise the fact that the properties of
the isomorphism need to hold only on a set of full measure.

\subsection{Subshifts and shifts of finite type}

In this section, we introduce multidimensional subshifts,
a particular type of dynamical systems 
\cite[\S 13.10]{MR1369092},
\cite{MR1861953,MR2078846,MR3525488}.
Let $\Acal$ be a finite set, $d\geq 1$, and let $\Acal^{\Z^d}$ be the set of all maps
$x:\Z^d\to\Acal$, equipped with the compact product topology. 
An element $x\in\Acal^{\Z^d}$ is called \defn{configuration}
and we write it as $x=(x_\Bm)=(x_\Bm:\Bm\in\Z^d)$,
where $x_\Bm\in\Acal$ denotes the value of $x$ at $\Bm$. 
The topology on $\Acal^{\Z^d}$ is compatible with the metric defined for all
configurations $x,x'\in\Acal^{\Z^d}$ by $\dist(x,x')=2^{-\min\left\{\Vert\bn\Vert\,:\,
x_\bn\neq x'_\bn\right\}}$
where $\Vert\bn\Vert = |n_1| + \dots + |n_d|$.
The \defn{shift action} $\sigma:\bn\mapsto
\sigma^\bn$ of the additive group $\Z^d$ on $\Acal^{\Z^d}$ is defined by
\begin{equation}\label{eq:shift-action}
    (\sigma^\bn(x))_\Bm = x_{\Bm+\bn}
\end{equation}
for every $x=(x_\Bm)\in\Acal^{\Z^d}$ and $\bn\in\Z^d$. 
If $X\subset \Acal^{\Z^d}$,
let $\overline{X}$ denote the topological closure of $X$
and let $\shiftclosure{X}:=\{\sigma^\bn(x)\mid x\in X, \bn\in\Z^d\}$
denote the shift-closure of $X$.
A subset $X\subset
\Acal^{\Z^d}$ is \defn{shift-invariant} if 
$\shiftclosure{X}=X$. A closed, shift-invariant subset
$X\subset\Acal^{\Z^d}$ is a \defn{subshift}. 
If $X\subset\Acal^{\Z^d}$ is a subshift we write
$\sigma=\sigma^X$ for the restriction of the shift action
\eqref{eq:shift-action} to $X$. 
%If $X\subset\Acal^{\Z^d}$ is a subshift it will sometimes be helpful to specify the
%shift action of $\Z^d$ explicitly and to write $(X,\sigma)$ instead of $X$.
When $X$ is a subshift,
the triple $(X,\Z^d,\sigma)$ is a dynamical system
and the notions presented in the previous section hold.

A configuration $x\in X$ is \defn{periodic} if there is a nonzero vector
$\bn\in\Z^d\setminus\{\zero\}$ such that $x=\sigma^\bn(x)$
and otherwise it is \defn{nonperiodic}.
We say that a nonempty subshift $X$ is \defn{aperiodic}
if the shift action $\sigma$ on $X$ is free.

For any subset $S\subset\Z^d$ let $\pi_S:\Acal^{\Z^d}\to\Acal^S$ denote the
projection map which restricts every $x\in\Acal^{\Z^d}$ to $S$. 
A \defn{pattern} is a function $p\in\Acal^S$ for some finite subset
$S\subset\Z^d$.
To every pattern $p\in\Acal^S$ corresponds
a subset $\pi_S^{-1}(p)\subset\Acal^{\Z^d}$ called \defn{cylinder}.
% A subshift $X\subset\Acal^{\Z^d}$ is a \emph{shift of finite type (SFT)} if there
% exists a finite set $S\subset\Z^d$ such that
% \begin{equation}\label{eq:SFT}
%     X = \{x\in\Acal^{\Z^d} \mid \pi_S\cdot\sigma^\bn(x)\in\pi_S(X)
%         \text{ for every } \bn\in\Z^d\}.
% \end{equation}
A nonempty set $X\subset\Acal^{\Z^d}$ is a
\defn{subshift} if and only if there exists a set $\Fcal$
of \defn{forbidden} patterns such that
$X=X_\Fcal$ where
\begin{equation}\label{eq:SFT}
    X_\Fcal = \{x\in\Acal^{\Z^d} \mid \pi_S\circ\sigma^\bn(x)\notin\Fcal
    \text{ for every } \bn\in\Z^d \text{ and } S\subset\Z^d\},
\end{equation}
see \cite[Prop.~9.2.4]{MR3525488}.
A subshift $X\subset\Acal^{\Z^d}$ is a 
\defn{subshift of finite type} (SFT) if there exists a finite set $\Fcal$ such that $X=X_\Fcal$
A subshift $X\subset\Acal^{\Z^d}$ is \defn{effective}
if there exists a \defn{computably enumerable} family of forbidden patterns $\Fcal$
such that $X=X_\Fcal$.

In this chapter, we mostly consider subshifts of finite type on $\Z\times\Z$,
that is, the case $d=2$.

\subsection{$d$-dimensional word}

In this section, we recall the definition of $d$-dimensional word that appeared
in \cite{MR2579856} and we keep the notation $u\odot^i v$ they proposed
for the concatenation. 

We denote by
$\{\be_k|1\leq k\leq d\}$ the canonical
basis of $\Z^d$ where $d\geq1$ is an integer.
If $i\leq j$ are integers, then $\llbracket i, j\rrbracket$ denotes the
interval of integers $\{i, i+1, \dots, j\}$.
Let $\bn=(n_1,\dots,n_d)\in\N^d$ and $\Acal$ be an alphabet.
We denote by $\Acal^{\bn}$ the set of functions
\begin{equation*}
    u:
\llbracket 0,n_1-1\rrbracket
\times
\cdots
\times
\llbracket 0,n_d-1\rrbracket
\to\Acal.
\end{equation*}
An element $u\in\Acal^\bn$ is called a
\defn{$d$-dimensional word} of \defn{shape} $\bn=(n_1,\dots,n_d)\in\N^d$
on the alphabet~$\Acal$.
We use the notation $\shape(u)=\bn$ when necessary.
The set of all finite $d$-dimensional words is 
$\Acal^{*^d}=\{\Acal^\bn\mid\bn\in\N^d\}$.
A $d$-dimensional word of shape $\be_k+\sum_{i=1}^d\be_i$ is called a
\defn{domino in the direction $\be_k$}.
When the context is clear, we write $\Acal$ instead of $\Acal^{(1,\dots,1)}$.
When $d=2$, we represent a $d$-dimensional word $u$ of shape $(n_1,n_2)$ as a
matrix with Cartesian coordinates:
\begin{equation*}
    u=
    \left(\begin{array}{ccc}
        u_{0,n_2-1} &\dots   & u_{n_1-1,n_2-1} \\
        \dots   &\dots   & \dots \\
        u_{0,0} &\dots   & u_{n_1-1,0}
    \end{array}\right).
\end{equation*}
Let $\bn,\Bm\in\N^d$ and $u\in\Acal^\bn$ and $v\in\Acal^\Bm$.
If there exists an index $i$ such that 
$n_j=m_j$ for all $j\in\{1,\dots,d\}\setminus\{i\}$,
then the \defn{concatenation} of $u$ and $v$ in the direction $\be_i$ 
is defined: it is
the 
$d$-dimensional word $u\odot^i v$ of shape $(n_1,\dots,n_{i-1},n_i+m_i,n_{i+1},\dots,n_d)\in\N^d$
given as
\begin{equation*}
    (u\odot^i v) (\ba) = 
\begin{cases}
    u(\ba)          & \text{if}\quad 0 \leq a_i < n_i,\\
    v(\ba-n_i\be_i) & \text{if}\quad n_i \leq a_i < n_i+m_i.
\end{cases}
\end{equation*}
The following equation illustrates the concatenation of $2$-dimensional words 
in the direction $\be_2$:
\[
    \arraycolsep=2.5pt
\left(\begin{array}{ccccc}
4 & 5 \\
10 & 5
\end{array}\right)
\odot^2
\left(\begin{array}{ccccc}
3 & 10 \\
9 & 9 \\
0 & 0 \\
\end{array}\right)
    =
\left(\begin{array}{ccccc}
3 & 10 \\
9 & 9 \\
0 & 0 \\
4 & 5 \\
10 & 5
\end{array}\right)
\]
whereas
\[
    \arraycolsep=2.5pt
\left(\begin{array}{ccccc}
3 & 10 \\
9 & 9 \\
0 & 0 \\
\end{array}\right)
\odot^2
\left(\begin{array}{ccccc}
4 & 5 \\
10 & 5
\end{array}\right)
    =
\left(\begin{array}{ccccc}
4 & 5 \\
10 & 5 \\
3 & 10 \\
9 & 9 \\
0 & 0
\end{array}\right)
\]
and in the direction $\be_1$:
\[
    \arraycolsep=2.5pt
\left(\begin{array}{ccccc}
2 & 8 & 7  \\
7 & 3 & 9 \\
1 & 1 & 0 \\
6 & 6 & 7 \\
7 & 4 & 3 
\end{array}\right)
\odot^1
\left(\begin{array}{ccccc}
3 & 10 \\
9 & 9 \\
0 & 0 \\
4 & 5 \\
10 & 5
\end{array}\right)
    =
\left(\begin{array}{ccccc}
2 & 8 & 7 & 3 & 10 \\
7 & 3 & 9 & 9 & 9 \\
1 & 1 & 0 & 0 & 0 \\
6 & 6 & 7 & 4 & 5 \\
7 & 4 & 3 & 10 & 5
\end{array}\right).
\]

Let $\bn,\Bm\in\N^d$ and $u\in\Acal^\bn$ and $v\in\Acal^\Bm$.
We say that $u$ \defn{occurs in $v$ at position} $\bp\in\N^d$ if
$v$ is large enough, i.e., $\Bm-\bp-\bn\in\N^d$ and
\[
    v(\ba+\bp) = u(\ba)
\]
for all $\ba=(a_1,\dots,a_d)\in\N^d$ such that 
$0\leq a_i<n_i$ with $1\leq i\leq d$.
If $u$ occurs in $v$ at some position, then we say that $u$ is a
$d$-dimensional \defn{subword} or \defn{factor} of $v$.

\subsection{$d$-dimensional rectangular language}

A subset $L\subseteq\Acal^{*^d}$ is called a $d$-dimensional \defn{language}. The
\defn{factorial closure} of a language $L$ is
\begin{equation*}
    \factorialclosure{L}
    = \{u\in\Acal^{*^d} \mid u\text{ is a $d$-dimensional subword of some } 
                           v\in L\}.
\end{equation*}
A language $L$ is \defn{factorial} if $\factorialclosure{L}=L$.
All languages considered in this contribution are factorial.
Given a configuration $x\in\Acal^{\Z^d}$, the \defn{language} $\Lcal(x)$ defined by $x$ is
\begin{equation*}
    \Lcal(x) = \{u\in\Acal^{*^d} \mid u\text{ is a $d$-dimensional subword of } x\}.
\end{equation*}
The \defn{language} of a subshift $X\subseteq\Acal^{\Z^d}$ is
    $\Lcal_X = \cup_{x\in X} \Lcal(x)$.
Conversely, given a factorial language $L\subseteq\Acal^{*^d}$ we define the subshift
\begin{equation*}
    \Xcal_L = \{x\in\Acal^{\Z^d}\mid \Lcal(x)\subseteq L\}.
\end{equation*}
A $d$-dimensional subword $u\in\Acal^{*^d}$ is \defn{allowed} in a 
subshift $X\subset\Acal^{\Z^d}$ if $u\in\Lcal_X$
and it is \defn{forbidden} in $X$ if $u\notin\Lcal_X$.
A language $L\subseteq\Acal^{*^d}$ is \defn{forbidden} in a 
subshift
$X\subset\Acal^{\Z^d}$ if $L\cap\Lcal_X=\varnothing$.

\begin{exo}[label={exo:subshift-easy}]
    Let $x:\Z^d\to\{0,1\}$ be the configuration
    defined by 
    \[
        x(\bn) = \Vert\bn\Vert_1\bmod 2
        \quad
        \text{ for every }
        \bn\in\Z^d.
    \]
    Describe $\Lcal(x)$. 
    How many elements are in the subshift $\Xcal_{\Lcal(x)}$?
\end{exo}

\subsection{$d$-dimensional morphisms}

In this section, we generalize the definition of $d$-dimensional morphisms
\cite{MR2579856} to the case where the domain and codomain are different as in
the case of $S$-adic systems \cite{MR3330561}.

Let $\Acal$ and $\Bcal$ be two alphabets.
Let $L\subseteq\Acal^{*^d}$ be a factorial language.
A function $\omega:L\to\Bcal^{*^d}$ is a \defn{$d$-dimensional
morphism} if for every
$i$ with $1\leq i\leq d$,
and every $u,v\in L$ such that 
$u\odot^i v$ is defined
and
$u\odot^i v\in L$,
we have
that the concatenation $\omega(u)\odot^i \omega(v)$
in direction $\be_i$ is defined and
\begin{equation*}
    \omega(u\odot^i v) = \omega(u)\odot^i \omega(v).
\end{equation*}
Note that the left-hand side of the equation is defined since
$u\odot^i v$ belongs to the domain of $\omega$.
A $d$-dimensional morphism $L\to\Bcal^{*^d}$ is thus completely defined from the
image of the letters in $\Acal$, so we sometimes denote
a $d$-dimensional morphism as a rule $\Acal\to\Bcal^{*^d}$ 
when the language $L$ is unspecified.

The next lemma can be deduced from the definition.
It says that when $d\geq 2$ every $d$-dimensional morphism
defined on the whole set $L=\Acal^{*^d}$ is uniform.
We say that a $d$-dimensional morphism $\omega:L\to\Bcal^{*^d}$ is \defn{uniform}
if there exists a shape $\bn\in\N^d$ such that $\omega(a)\in\Bcal^\bn$ for
every letter $a\in\Acal$.
These are called block-substitutions in \cite{frank_introduction_2018}.

\begin{lemma}
Let $\omega:L\to\Bcal^{*^d}$ be a $d$-dimensional morphism.
If $d\geq 2$ and $L=\Acal^{*^d}$, then $\omega$ is uniform.
\end{lemma}

Therefore, to consider non-uniform $d$-dimensional morphisms when $d\geq 2$, we
need to restrict the domain to a strict subset $L\subsetneq\Acal^{*^d}$.
In \cite{MR2579856} and \cite[p.144]{MR1014984}, they consider the case $\Acal=\Bcal$
and they restrict the domain of $d$-dimensional morphisms to the language they
generate.

Given a language $L\subseteq\Acal^{*^d}$ of $d$-dimensional words and 
a $d$-dimensional morphism $\omega:L\to\Bcal^{*^d}$, we define the image of the
language $L$ under $\omega$ as the language
\begin{equation*}
\factorialclosure{\omega(L)}
    = \{u\in\Bcal^{*^d} \mid u\text{ is a $d$-dimensional subword of }
                  \omega(v) \text{ with } v\in L\}
    \subseteq \Bcal^{*^d}.
\end{equation*}
Observe that some elements of 
$\factorialclosure{\omega(L)}$ do not have a preimage under $\omega$.

Let $L\subseteq\Acal^{*^d}$ be a factorial language
and $\Xcal_L\subseteq\Acal^{\Z^d}$ be the subshift generated by $L$.
A $d$-dimensional morphism 
$\omega:L \to\Bcal^{*^d}$ 
can be extended to a continuous map 
$\omega:\Xcal_L\to\Bcal^{\Z^d}$
in such a way that the origin of $\omega(x)$ is at zero position
in the word $\omega(x_\zero)$
for all $x\in\Xcal_L$. More precisely, the image
under $\omega$ of the configuration $x\in\Xcal_L$ is
\[
    \omega(x) =
    \lim_{n\to\infty}\sigma^{f(n)}\omega\left(\sigma^{-n\UN}(x|_{\llbracket-n\UN,n\UN\llbracket})\right)
    \in\Bcal^{\Z^d}
\]
where $\UN=(1,\dots,1)\in\Z^d$,
$f(n)=\shape\left(\omega(\sigma^{-n\UN}(x|_{\llbracket-n\UN,\zero\llbracket}))\right)$
for all $n\in\N$ and
$\llbracket\Bm,\bn\llbracket
=
\ZZrange{m_1}{n_1-1}
\times \cdots \times
\ZZrange{m_d}{n_d-1}$.

In general, the closure under the shift of the image of a subshift
$X\subseteq\Acal^{\Z^d}$ under $\omega$
is the subshift
\begin{equation*}
\shiftclosure{\omega(X)}
    = \{\sigma^\bk\omega(x)\in\Bcal^{\Z^d} \mid \bk\in\Z^d, x\in X\}
    \subseteq \Bcal^{\Z^d}.
\end{equation*}

% THIS RESULT SEEMS NOT NECESSARY
% The fact that a subshift is the image of another
% subshift under a $d$-dimensional morphism can be restated equivalently in terms
% of their languages. 
% The next lemma is well-accepted and its proof
% with the same notations can be found in \cite{MR3978536}.
% % > l 192: The codomain of $\omega$ should better be $\mathcal{B}^{\mathbb{Z}^d}$
% \begin{lemma}\label{lem:existence-omega-representation}
%     Let $\omega:X\to Y$ be a $d$-dimensional morphism for some subshifts
%     $X\subseteq\Acal^{\Z^d}$ and $Y\subseteq\Bcal^{\Z^d}$. Then 
%     $Y=\shiftclosure{\omega(X)}$
%     if and only if
%     $\L_Y=\factorialclosure{\omega(\L_X)}$.
% \end{lemma}

Now we show that $d$-dimensional morphisms preserve minimality of subshifts.

\begin{lemma}\label{lem:minimal-implies-minimal}
    Let $\omega:X\to\Bcal^{\Z^d}$ be a $d$-dimensional morphism for some 
    $X\subseteq\Acal^{\Z^d}$.
    If $X$ is a minimal subshift, then $\shiftclosure{\omega(X)}$ is
    a minimal subshift.
\end{lemma}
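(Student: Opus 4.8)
The plan is to use the standard characterization of minimality in terms of syndetic recurrence of patterns: a subshift $Y\subseteq\Bcal^{\Z^d}$ is minimal if and only if for every pattern $u\in\Lcal_Y$ there is a finite window $W\subset\Z^d$ such that every $y\in Y$ contains an occurrence of $u$ inside every translate of $W$ (equivalently, $u$ occurs syndetically in every $y\in Y$). So I would first fix $Y:=\overline{\omega(X)}^{\sigma}$ and a pattern $u\in\Lcal_Y$, and reduce to showing $u$ occurs syndetically. By definition of $\overline{\omega(X)}^{\sigma}$, the language $\Lcal_Y$ consists of subwords of $\omega(x)$ for $x\in X$; hence $u$ is a subword of $\omega(v)$ for some finite word $v$ occurring in some $x\in X$, say $v\in\Lcal_X$. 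Enlarging $v$ to a block $B\in\Lcal_X$ of some shape $\bn$ that "surrounds" the occurrence of $u$ with a margin of one letter in every direction, I can arrange that $u$ occurs in $\omega(w)$ whenever $B$ occurs as a subword of $w$ (the margin is needed because $\omega$ is non-uniform, so the precise position of the image of a cell depends on its neighbours; one extra layer of context pins down enough of the image).

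Next I would invoke minimality of $X$: the block $B\in\Lcal_X$ occurs syndetically in every configuration $x\in X$, i.e. there is a finite window $W_X\subset\Z^d$ so that a translate of $B$ appears inside $x|_{z+W_X}$ for every $x\in X$ and every $z\in\Z^d$. Applying $\omega$ and using that $\omega$ intertwines concatenation (so the image of a large block of $x$ is a correspondingly large block of $\omega(x)$, up to a bounded shift coming from the shapes of the images of the surrounding letters), each such occurrence of $B$ in $x$ yields an occurrence of $u$ in $\omega(x)$ within a bounded distance. Since the shapes $\{\shape(\omega(a)):a\in\A\}$ are finitely many and hence have bounded diameter, the "bounded distance" is uniform: there is a finite window $W_Y\subset\Z^d$, depending only on $u$, $W_X$, and $\max_a\|\shape(\omega(a))\|$, such that $u$ occurs in $\omega(x)|_{z'+W_Y}$ for every $x\in X$ and every $z'\in\Z^d$. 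As every element of $Y$ is a shift of some $\omega(x)$, the pattern $u$ occurs syndetically in every element of $Y$, which is the desired minimality of $Y$. (That $Y$ is a nonempty subshift is immediate from the definitions, $X$ being nonempty.)

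The main obstacle is bookkeeping the shifts introduced by the non-uniformity of $\omega$: unlike the uniform (block-substitution) case, the location within $\omega(x)$ of the image of the cell $x_\Bm$ is $\sum$ over a "staircase" of earlier cells of the relevant coordinates of their image shapes, so one must be careful that (a) knowing $B$ with a one-cell margin determines the restriction of $\omega(\cdot)$ to the relevant sub-block exactly, and (b) the offset between "where $B$ sits in $x$" and "where the image of $B$ sits in $\omega(x)$" is controlled by the shapes of the images of letters, hence bounded. Both points follow from the concatenation-compatibility in the definition of $d$-dimensional morphism together with finiteness of $\A$; I would state them as a short observation and then the syndeticity transfer is routine. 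No use is made of uniqueness of $\phi$ or of $d\ge 2$ here — the argument works for any $d$ and any non-uniform morphism.
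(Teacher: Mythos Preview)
Your syndetic-recurrence route is correct and genuinely different from the paper's. The paper gives a short topological argument: for a nonempty closed shift-invariant $Z\subseteq\overline{\omega(X)}^\sigma$, pick $z=\sigma^{\bk'}\omega(x')\in Z$ and an arbitrary $u=\sigma^{\bk}\omega(x)\in\overline{\omega(X)}^\sigma$; minimality of $X$ gives $x=\lim_n\sigma^{\bk_n}x'$, and continuity of $\omega$ together with the relation $\omega\circ\sigma^{\bk_n}=\sigma^{\bh_n}\circ\omega$ for suitable $\bh_n$ yields $u=\lim_n\sigma^{\bk+\bh_n-\bk'}z\in Z$. No patterns, no shape bookkeeping.

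Two minor remarks on your version. The one-letter margin around $v$ is unnecessary: if $B$ occurs in $x$ then $\omega(B)$ already occurs as a block in $\omega(x)$ by concatenation-compatibility, so any subword $u$ of $\omega(B)$ occurs in $\omega(x)$; your offsets affect only \emph{where} it appears, not \emph{whether}. And once you have a uniform syndeticity window for $u$ on $\{\sigma^{\bk}\omega(x):x\in X,\bk\in\Z^d\}$, you can pass to the closure for free since ``$u$ occurs in every $W_Y$-translate'' is a closed condition; this sidesteps having to argue that every limit point is again of that form. Your route gives an explicit recurrence bound for $u$ in terms of that of $B$ and $\max_a\shape(\omega(a))$; the paper's route buys a three-line proof.
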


\begin{proof}
    Let $\varnothing\neq Z\subseteq\shiftclosure{\omega(X)}$ be a closed
    shift-invariant subset.
    We want to show that $\shiftclosure{\omega(X)}\subseteq Z$.
    Let $u\in\shiftclosure{\omega(X)}$.
    Thus $u=\sigma^\bk\omega(x)$ for some $\bk\in\Z^d$ and $x\in X$.
    Since $Z\neq\varnothing$, there exists $z\in Z$.
    Thus $z=\sigma^{\bk'}\omega(x')$ for some $\bk'\in\Z^d$ and $x'\in X$.
    Since $X$ is minimal, there exists a sequence $(\bk_n)_{n\in\N}$,
    $\bk_n\in\Z^d$, such that $x=\lim_{n\to\infty}\sigma^{\bk_n} x'$.
    For some other sequence $(\bh_n)_{n\in\N}$, $\bh_n\in\Z^d$, we have
    \begin{equation*}
        u 
        = \sigma^\bk\omega(x) 
        = \sigma^\bk\omega\left(\lim_{n\to\infty}\sigma^{\bk_n} x'\right) 
        = \sigma^\bk\lim_{n\to\infty}\sigma^{\bh_n} \omega\left(x'\right)
        %= \lim_{n\to\infty}\sigma^{\bk+\bh_n-\bk'} \sigma^{\bk'} \omega\left(x'\right) 
        = \lim_{n\to\infty}\sigma^{\bk+\bh_n-\bk'} z.
    \end{equation*}
    Since $Z$ is closed and shift-invariant, it follows that $u\in Z$.
\end{proof}

\begin{exo}[label={exo:easy-image-by-Phi}]
We consider the $2$-dimensional morphism $\Phi$ 
    defined in Equation~\eqref{eq:definition-of-Phi}
    on the alphabet $\Acal=\Zrange{15}$.
Compute $\Phi(u)$ for each 2-dimensional word $u$ below
    \[
    \left(\begin{array}{c}
    11 \\
    \end{array}\right)
    ,\quad
    \left(13, \, 7\right)
    ,\quad
    \left(\begin{array}{c}
    6 \\
    10 \\
    \end{array}\right)
    ,\quad
    \left(\begin{array}{cc}
    6 & 1 \\
    11 & 8
    \end{array}\right)
    \]
except one for which the image $\Phi(u)$ is not well-defined.
\end{exo}

\begin{exo}[label={exo:image-of-subshift-is-a-subshift}]
    Let $\omega:X\to\Bcal^{\Z^d}$ be a $d$-dimensional morphism for some 
    $X\subseteq\Acal^{\Z^d}$.
    Prove that if $X$ is a subshift, then
$\shiftclosure{\omega(X)}$ is a subshift.
\end{exo}

\section{An aperiodic self-similar subshift}\label{chap:Labbe:sec:aperiodic-self-similar-subshifts}

\subsection{Self-similar subshifts}\label{chap:Labbe:subsec:self-similar-subshifts}

In this section, we consider languages and subshifts defined from morphisms
leading to self-similar structures. 
In this situation, the domain and codomain
of morphisms are defined over the same alphabet. 
Formally, we consider the case of $d$-dimensional morphisms
$\Acal\to\Bcal^{*^d}$ where $\Acal=\Bcal$.

The definition of self-similarity depends on the notion of expansiveness.
It avoids the presence of lower-dimensional self-similar structure by having
expansion in all directions.

\begin{definition}
We say that a $d$-dimensional morphism $\omega:\Acal\to\Acal^{*^d}$ is
\defn{expansive}
if for every $a\in\Acal$ and $K\in\N$,
there exists $m\in\N$ such that 
    \[
        \min(\shape(\omega^m(a)))>K.
    \]
\end{definition}

\begin{definition}
A subshift $X\subseteq\Acal^{\Z^d}$ 
is \defn{self-similar}
if there exists an expansive
$d$-dimensional morphism $\omega:\Acal\to\Acal^{*^d}$ such that
$X=\shiftclosure{\omega(X)}$.
\end{definition}

Respectively, 
a language $L\subseteq\Acal^{*^d}$
is \defn{self-similar}
if there exists an expansive
$d$-dimensional morphism $\omega:\Acal\to\Acal^{*^d}$ such that
$L=\factorialclosure{\omega(L)}$.

Self-similar languages and subshifts can be constructed by iterative
application of a morphism $\omega$ starting with the letters.
The \defn{language} $\Lcal_\omega$ defined by an expansive $d$-dimensional
morphism $\omega:\Acal\to\Acal^{*^d}$ is
\begin{equation*}
    \Lcal_\omega = \{u\in\Acal^{*^d} \mid u\text{ is a $d$-dimensional subword of }
    \omega^n(a) \text{ for some } a\in\Acal\text{ and } n\in\N \}.
\end{equation*}
It satisfies
$\Lcal_\omega=\factorialclosure{\omega(\Lcal_\omega)}$
and thus is self-similar.
The \defn{substitutive shift} $\Xcal_\omega=\Xcal_{\Lcal_\omega}$
defined from the language of $\omega$ is a self-similar subshift
since $\Xcal_\omega=\shiftclosure{\omega(\Xcal_\omega)}$ holds.

\begin{figure}
\begin{center}
    \includegraphics{figures/Phi_on_seed_12.pdf}
\end{center}
\caption{
Building a configuration of the positive quadrant with $\Phi$.
We compute $(\Phi^n(12))_{n\in\N}$ for the first values of $n\in\{0,1,2,3,4,5,6\}$.
The gray rectangles surround
patterns seen two step before in the application of $\Phi$.
The limit $\lim_{n\to\infty}\Phi^{2n}(12)$ defines a configuration
of the positive quadrant $\N^2$ and similarly for
the limit $\lim_{n\to\infty}\Phi^{2n+1}(12)$.}
\label{fig:Phi-on-seed-12}
\end{figure}

\begin{figure}
\begin{center}
    \includegraphics{figures/Phi_on_seed_8_12_1_6.pdf}
\end{center}
%https://tex.stackexchange.com/questions/34374/how-can-i-put-a-matrix-in-a-figure-caption
\newsavebox{\smlmat}% Box to store smallmatrix content
\savebox{\smlmat}{$\left(\begin{smallmatrix}8&12\\1&6\end{smallmatrix}\right)$}
\newsavebox{\smlmatII}% Box to store smallmatrix content
\savebox{\smlmatII}{$\left(\begin{smallmatrix}17&13\\16&15\end{smallmatrix}\right)$}
\caption{Building a configuration of $\Z^2$ with $\Phi$.
We compute $(\Phi^n\usebox{\smlmat})_{n\in\N}$ for the first values of $n\in\{0,1,2,3,4,5\}$.
The gray rectangles surround
patterns seen two step before in the application of $\Phi$.
The limit $\lim_{n\to\infty}\Phi^{2n}\usebox{\smlmat}$ defines a configuration
of $\Z^2$ and similarly for
the limit $\lim_{n\to\infty}\Phi^{2n+1}\usebox{\smlmat}$.}
\label{fig:Phi-on-seed-8-12-1-6}
\end{figure}

Let $\Phi:\Zrange{15}\to\Zrange{15}^{*^2}$ 
be the $2$-dimensional morphism defined in Exercise~\ref{exo:easy-image-by-Phi}.
At Figure~\ref{fig:Phi-on-seed-12}, we compute the sequence of $2$-dimensional words
$(\Phi^n(12))_{n\in\N}$ for the first values of $n$. 
Since 12 appears in the lower left corner of $\Phi^2(12)$, 
then the rectangular pattern $\Phi^{n}(12)$ appears in the lower left corner of $\Phi^{n+2}(12)$
for every integer $n\geq0$.
Thus, the limit
$\lim_{n\to\infty}\Phi^{2n}(12)$ is well-defined and it defines a
configuration of the positive quadrant~$\N^2$.

This procedure can be done in each of the four quadrants.
At Figure~\ref{fig:Phi-on-seed-8-12-1-6}, we compute the sequence of $2$-dimensional words
$(\Phi^n\left(\begin{smallmatrix}8&12\\1&6\end{smallmatrix}\right))_{n\in\N}$
for the first values of~$n$. 
The limits
\begin{align*}
x &= \lim_{n\to\infty}\Phi^{2n}\left(\begin{smallmatrix}8&12\\1&6\end{smallmatrix}\right)\\
y &= \lim_{n\to\infty}\Phi^{2n+1}\left(\begin{smallmatrix}8&12\\1&6\end{smallmatrix}\right)
\end{align*}
are well-defined and define two configurations of $\Z^2$.
They satisfy $\Phi(x)=y$ and $\Phi(y)=x$.
This implies that the configurations $x$ and $y$ are fixed points of $\Phi^2$
since $\Phi^2(x)=x$ and $\Phi^2(y)=y$.

% \todo[inline]{TODO: make the following more pedagogique en utilisant l'exemple}
% 
% The definition of prolongable substitutions \cite[Def.
% 1.2.18--19]{MR2742574} can be adapted in the case of $d$-dimensional morphisms.
% Let $\omega:\Acal\to\Acal^{*^d}$ be a $d$-dimensional morphism.
% Let $s\in\{+1,-1\}^d$.
% We say that $\omega$ is \defn{prolongable on a letter $a\in\Acal$} in the
% hyperoctant of sign $s$ if the letter $a$ appears in the appropriate corner of
% its own image $\omega(a)$ more precisely at position
% $p=(p_1,\dots,p_d)\in\N^d$ where
% \[
% p_i = 
% \begin{cases}
% 0       & \text{if}\quad s_i = +1,\\
% n_i-1   & \text{if}\quad s_i = -1.
% \end{cases}
% \]
% where $\bn=(n_1,\dots,n_d)\in\N^d$ is the shape of $\omega(a)$.
% If $\omega$ is prolongable on letter $a\in\Acal$ in the hyperoctant of
% sign $s$ 
% and if
% $\lim_{m\to\infty}\min(\shape(\omega^m(a)))=\infty$, then
% $\lim_{m\to\infty}\omega^m(a)$ is a $d$-dimensional infinite word
% $s_0\N\times\dots\times s_{d-1}\N\to\Acal$.

\begin{exo}[label={exo:prove-expansive}]
    Prove that $\Phi$ defined in 
    Equation~\eqref{eq:definition-of-Phi} 
    is expansive.
\end{exo}

\begin{exo}[label={exo:substitutive-shift-nonempty}]
    Prove that $\Xcal_\Phi\neq\varnothing$.
\end{exo}

\clearpage % after the two big pictures

\begin{exo}[label={exo:Phi-sagemath}]
    Using the \texttt{slabbe} package of the SageMath open-source software,
    define the 2-dimensional substitution $\Phi$ as follows.
    Note that $2$-dimensional words are encoded using Cartesian-like
    coordinates instead of matrix-like coordinates.
\begin{sagecommandlinetcb}
\begin{sagecommandline}
sage: from slabbe import Substitution2d
sage: Phi = Substitution2d({0: [[14]], 1: [[13]], 2: [[12],[10]],
....: 3: [[11],[8]], 4: [[14],[7]], 5: [[13],[7]], 6: [[12],[7]],
....: 7: [[12,6]], 8: [[14,3]], 9: [[13,3]], 10: [[12,2]],
....: 11: [[12,6],[10,1]], 12: [[11,6],[8,1]], 13: [[15,5],[9,1]],
....: 14: [[11,4],[8,1]], 15: [[12,2],[7,0]]})
\end{sagecommandline}
\end{sagecommandlinetcb}
    Reproduce the computations of the 
    Figure~\ref{fig:Phi-on-seed-12} and
    Figure~\ref{fig:Phi-on-seed-8-12-1-6}
    in SageMath.
\end{exo}

\begin{exo}[label={exo:HV-dominoes}]
    The language of horizontal and vertical dominoes that we see in 
    Figure~\ref{fig:Phi-on-seed-12} 
    and
    Figure~\ref{fig:Phi-on-seed-8-12-1-6}
    obtained from the morphism $\Phi$ are
    \[
        H = \left\{
            \begin{array}{l}
\left(\begin{smallmatrix}0  & 3 \end{smallmatrix}\right),
\left(\begin{smallmatrix}1  & 2 \end{smallmatrix}\right),
\left(\begin{smallmatrix}1  & 3 \end{smallmatrix}\right),
\left(\begin{smallmatrix}1  & 6 \end{smallmatrix}\right),
\left(\begin{smallmatrix}2  & 0 \end{smallmatrix}\right),
\left(\begin{smallmatrix}2  & 4 \end{smallmatrix}\right),
\left(\begin{smallmatrix}3  & 6 \end{smallmatrix}\right),
\left(\begin{smallmatrix}4  & 1 \end{smallmatrix}\right),
\left(\begin{smallmatrix}5  & 1 \end{smallmatrix}\right),\\
\left(\begin{smallmatrix}6  & 1 \end{smallmatrix}\right),
\left(\begin{smallmatrix}6  & 5 \end{smallmatrix}\right),
\left(\begin{smallmatrix}7  & 13\end{smallmatrix}\right),
\left(\begin{smallmatrix}8  & 12\end{smallmatrix}\right),
\left(\begin{smallmatrix}9  & 11\end{smallmatrix}\right),
\left(\begin{smallmatrix}9  & 12\end{smallmatrix}\right),
\left(\begin{smallmatrix}10 & 14\end{smallmatrix}\right),
\left(\begin{smallmatrix}11 & 8 \end{smallmatrix}\right),\\
\left(\begin{smallmatrix}12 & 7 \end{smallmatrix}\right),
\left(\begin{smallmatrix}12 & 10\end{smallmatrix}\right),
\left(\begin{smallmatrix}12 & 11\end{smallmatrix}\right),
\left(\begin{smallmatrix}12 & 15\end{smallmatrix}\right),
\left(\begin{smallmatrix}13 & 7 \end{smallmatrix}\right),
\left(\begin{smallmatrix}13 & 11\end{smallmatrix}\right),
\left(\begin{smallmatrix}13 & 12\end{smallmatrix}\right),\\
\left(\begin{smallmatrix}14 & 7 \end{smallmatrix}\right),
\left(\begin{smallmatrix}14 & 11\end{smallmatrix}\right),
\left(\begin{smallmatrix}15 & 9 \end{smallmatrix}\right)
            \end{array}\right\}
    \]
    and
    \[
        V = \left\{
            \begin{array}{l}
\left(\begin{smallmatrix}7  \\ 0 \end{smallmatrix}\right),
\left(\begin{smallmatrix}7  \\ 1 \end{smallmatrix}\right),
\left(\begin{smallmatrix}8  \\ 1 \end{smallmatrix}\right),
\left(\begin{smallmatrix}10 \\ 1 \end{smallmatrix}\right),
\left(\begin{smallmatrix}13 \\ 2 \end{smallmatrix}\right),
\left(\begin{smallmatrix}13 \\ 3 \end{smallmatrix}\right),
\left(\begin{smallmatrix}11 \\ 4 \end{smallmatrix}\right),
\left(\begin{smallmatrix}11 \\ 5 \end{smallmatrix}\right),
\left(\begin{smallmatrix}12 \\ 6 \end{smallmatrix}\right),
\left(\begin{smallmatrix}14 \\ 6 \end{smallmatrix}\right),\\
\left(\begin{smallmatrix}0  \\ 7 \end{smallmatrix}\right),
\left(\begin{smallmatrix}8  \\ 7 \end{smallmatrix}\right),
\left(\begin{smallmatrix}10 \\ 7 \end{smallmatrix}\right),
\left(\begin{smallmatrix}1  \\ 8 \end{smallmatrix}\right),
\left(\begin{smallmatrix}9  \\ 8 \end{smallmatrix}\right),
\left(\begin{smallmatrix}1  \\ 9 \end{smallmatrix}\right),
\left(\begin{smallmatrix}1  \\ 10\end{smallmatrix}\right),
\left(\begin{smallmatrix}9  \\ 10\end{smallmatrix}\right),
\left(\begin{smallmatrix}4  \\ 11\end{smallmatrix}\right),
\left(\begin{smallmatrix}6  \\ 11\end{smallmatrix}\right),\\
\left(\begin{smallmatrix}15 \\ 11\end{smallmatrix}\right),
\left(\begin{smallmatrix}2  \\ 12\end{smallmatrix}\right),
\left(\begin{smallmatrix}6  \\ 12\end{smallmatrix}\right),
\left(\begin{smallmatrix}11 \\ 12\end{smallmatrix}\right),
\left(\begin{smallmatrix}15 \\ 12\end{smallmatrix}\right),
\left(\begin{smallmatrix}3  \\ 13 \end{smallmatrix}\right),
\left(\begin{smallmatrix}12 \\ 13\end{smallmatrix}\right),
\left(\begin{smallmatrix}14 \\ 13\end{smallmatrix}\right),
\left(\begin{smallmatrix}3  \\ 14\end{smallmatrix}\right),\\
\left(\begin{smallmatrix}12 \\ 14\end{smallmatrix}\right),
\left(\begin{smallmatrix}5  \\ 15\end{smallmatrix}\right),
        \end{array}\right\}
    \]
    Prove that $H$ and $V$ are exactly the dominoes that appear in $\Lcal_\Phi$,
    that is, 
    \[
    (\Acal\odot^1\Acal)\cap\Lcal_\Phi=H
    \text{ and }
    (\Acal\odot^2\Acal)\cap\Lcal_\Phi=V
    \]
    where $\Acal=\Zrange{15}$ is the alphabet on which the morphism $\Phi$ is
    defined.
\end{exo}

% Solution:
% \begin{sagecommandlinetcb}
% \begin{sagecommandline}
% sage: F = omega.list_2x2_factors()
% sage: columns = set(tuple(colA) for colA,colB in F)
% sage: rows = set((a,c) for (a,b),(c,d) in F)
% sage: Gcols = DiGraph(columns, format="list_of_edges")
% sage: Grows = DiGraph(rows, format="list_of_edges")
% sage: _ = TikzPicture.from_graph(Gcols).pdf()
% sage: _ = TikzPicture.from_graph(Grows).pdf()
% \end{sagecommandline}
% \end{sagecommandlinetcb}

\begin{exo}[label={exo:belongs-in-the-set}]
    Is the $2\times2$ word
    $\left(\begin{smallmatrix}1 & 2\\9 & 12\end{smallmatrix}\right)$
    in the set $\Lcal_\Phi$?
\end{exo}

\begin{exo}[label={exo:list-45-2x2-elements-in-substitutive-shift}]
    List the 45 elements of the set $\Acal^{(2,2)}\cap\Lcal_\Phi$.
\end{exo}

\begin{exo}[label={exo:substitutive-shift-8-periodic-points}]
    Describe the 8 periodic points of $\Phi$, i.e. the configurations
    $x\in\Acal^{\Z^2}$ such that $\Phi^k(x)=x$ for some $k\geq1$.
\end{exo}

\subsection{$d$-dimensional recognizability and aperiodicity}

The definition of recognizability dates back to the work of Host, Qu\'effelec and
Moss\'e \cite{MR1168468}. 
% See also \cite{akiyama_mosse_2017} who proposed a completion of the statement
% and proof for B. Moss\'e's unilateral recognizability theorem.
The definition introduced below is based on work of Berth\'e et al.
\cite{MR4015135} on the recognizability in the case
of $S$-adic systems where more than one substitution is involved.

\begin{definition}[recognizable]
Let $X\subseteq\Acal^{\Z^d}$ and
$\omega:X\to\Bcal^{\Z^d}$ be a $d$-dimensional morphism.
If $y\in\shiftclosure{\omega(X)}$, i.e.,
$y=\sigma^\bk\omega(x)$ for some $x\in X$ and $\bk\in\Z^d$, where $\sigma$ is
the $d$-dimensional shift map, we say that $(\bk,x)$ is an
\defn{$\omega$-representation of $y$}. We say that it is \defn{centered} if
$y_\zero$ lies inside of the image of $x_\zero$, i.e., if
$\zero\leq\bk<\shape(\omega(x_\zero))$ coordinate-wise.
    We say that $\omega$ is \defn{recognizable in} $X\subseteq\Acal^{\Z^d}$
if each $y\in\Bcal^{\Z^d}$ has at most one centered $\omega$-representation 
$(\bk,x)$ with $x\in X$.
\end{definition}

% We say that $\omega$ is \defn{fully recognizable} 
% if it is recognizable in $\Acal^{\Z^d}$.

\begin{lemma}\label{lem:aperiodic-implies-aperiodic}
    Let $\omega:X\to Y$ be some $d$-dimensional morphism between two
    subshifts $X$ and $Y$.
\begin{enumerate}
\item 
   If $Y$ is aperiodic, then $X$ is aperiodic.
\item If $X$ is aperiodic and $\omega$ is recognizable in $X$,
    then $\shiftclosure{\omega(X)}$ is aperiodic.
\end{enumerate}
\end{lemma}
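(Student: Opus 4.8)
The plan is to prove the two statements in order, since the second one will reuse ideas from the first. For statement (1), I would argue by contraposition: suppose $X$ is not aperiodic, so there is $x\in X$ and $\bn\in\Z^d\setminus\{\zero\}$ with $\sigma^\bn(x)=x$. I want to produce a nonzero period of some configuration in $Y$. Since $\omega$ is a $d$-dimensional morphism and $\sigma^\bn(x)=x$, applying $\omega$ should yield a configuration $\omega(x)\in Y$ together with a shift vector $\bn'$ — obtained from $\bn$ by the (integer) expansion encoded in $\omega$ on the columns/rows between position $\zero$ and position $\bn$ — such that $\sigma^{\bn'}\omega(x)=\omega(x)$. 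The key point is that $\bn'\neq\zero$: this follows because $\omega$ maps each letter to a word of shape in $\N^d$ with all coordinates at least $1$ (being a $d$-dimensional word with nonempty shape), so the total displacement accumulated over the nonzero vector $\bn$ is itself nonzero. Hence $\omega(x)\in Y$ is a periodic configuration, contradicting aperiodicity of $Y$.

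For statement (2), suppose $X$ is aperiodic and $\omega$ is recognizable in $X$, and suppose for contradiction that $\overline{\omega(X)}^\sigma$ is not aperiodic. Then there is $y\in\overline{\omega(X)}^\sigma$ and $\bn\in\Z^d\setminus\{\zero\}$ with $\sigma^\bn(y)=y$. Write a centered $\omega$-representation $(\bk,x)$ of $y$, so $y=\sigma^\bk\omega(x)$ with $x\in X$ and $\zero\leq\bk<\shape(\omega(x_\zero))$. The idea is to use periodicity to produce a second centered $\omega$-representation of $y$ and then invoke recognizability to pull the period back down to $X$. Concretely, $\sigma^\bn(y)=y$ means $\sigma^{\bk+\bn}\omega(x)=\sigma^\bk\omega(x)$, i.e. $\sigma^{\bn}\omega(x)=\omega(x)$ after absorbing $\bk$; but $\sigma^\bn\omega(x)=\omega(\sigma^{\bm}(x))$ for an appropriate $\bm\in\Z^d$ once $\bn$ decomposes, via the block structure of $\omega$, as the shape of $\omega$ applied to a displacement $\bm$ together with a centered remainder $\bk'$. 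Both $(\bk,x)$ and $(\bk',\sigma^{\bm}(x))$ are then centered $\omega$-representations of the same configuration $y$ (after translating $y$ so its origin sits in the relevant block), so recognizability forces $\bk=\bk'$ and $\sigma^{\bm}(x)=x$. If $\bm\neq\zero$ this contradicts aperiodicity of $X$; and if $\bm=\zero$ then $\bn$ is itself a centered remainder, forcing $\bn=\zero$, contradicting $\bn\neq\zero$.

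The main obstacle, and the step that needs to be handled carefully, is the bookkeeping in statement (2): translating the period $\bn$ of $y$ into a clean statement about two centered $\omega$-representations. One must track where the origin of $y$ sits inside the tiling of $\Z^d$ by the images $\omega(x_\Bm)$, and show that a nonzero period of $y$ cannot be "absorbed" entirely within a single image block (this again uses that all blocks have bounded shape, while $\bn\neq\zero$ can be iterated to exceed any bound, or more directly that periodicity by a fixed nonzero $\bn$ combined with the block decomposition yields a genuine nonzero displacement $\bm$ of $x$). Statement (1) is comparatively routine given the convention that morphism images have strictly positive shape in every coordinate; the delicate case there is only checking $\bn'\neq\zero$, which I would phrase in terms of the monotone coordinatewise growth of the shape function along the axes.
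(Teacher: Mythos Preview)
Your proposal is correct and follows essentially the same approach as the paper: part (1) by contraposition (a period of $x$ yields a nonzero period of $\omega(x)$), and part (2) by taking a centered $\omega$-representation $(\bk,x)$ of a hypothetically periodic $y$, using the period to produce a second centered $\omega$-representation $(\bk',\sigma^{\bq}x)$, and invoking recognizability to force $x=\sigma^{\bq}x$ with $\bq\neq\zero$. The paper's write-up is terser---it skips the explicit ``block decomposition'' of $\bn$ and simply observes that the non-centered representation $(\bp+\bk,x)$ can be re-centered as $(\bk',\sigma^{\bq}x)$ for some $\bq\neq\zero$---but the underlying mechanism is identical to what you describe.
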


\begin{proof}
    %(i) 
    If $X$ contains a periodic configuration $x$, then
    $\omega(x)\in Y$ is periodic.

    (ii)
    Let $y\in\shiftclosure{\omega(X)}$.
    Then, there exist $\bk\in\Z^d$ and $x\in X$ such that 
    $(\bk, x)$ is a centered $\omega$-representation of $y$, i.e.,
    $y=\sigma^\bk\omega(x)$.
    Suppose by contradiction that $y$ has a nontrivial period
    $\bp\in\Z^d\setminus\zero$.
    Since $y =\sigma^\bp y =\sigma^{\bp+\bk}\omega(x)$,
    we have that
    $(\bp+\bk, x)$ is an $\omega$-representation of $y$.
    Since $\omega$ is recognizable, this representation is not centered.
    Therefore there exists $\bq\in\Z^d\setminus\zero$ such that
    $y_\zero$ lies in the image of $x_\bq=(\sigma^\bq x)_\zero$.
    Therefore there exists $\bk'\in\Z^d$ such that
    $(\bk',\sigma^\bq x)$ is a centered $\omega$-representation of $y$.
    Since $\omega$ is recognizable, we conclude that
    $\bk=\bk'$ and $x=\sigma^\bq x$. Then $x\in X$ is periodic which is a
    contradiction.
\end{proof}

In general, $\omega(X)$ is not closed under the shift which implies that
$\omega$ is not onto $Y$. This motivates the following definition.

\begin{definition}
Let $X$, $Y$ be two subshifts
and $\omega:X\to Y$ be a $d$-dimensional morphism.
If $Y=\shiftclosure{\omega(X)}$, then
we say that $\omega$
is \defn{onto up to a shift}.
\end{definition}

The next proposition is well-known, see \cite{MR1637896,MR1168468}, who showed that
recognizability and aperiodicity are equivalent for primitive substitutive
sequences. We state and prove only one direction (the easy one) of the equivalence
which does not need the notion of primitivity.

\begin{proposition}\label{prop:expansive-recognizable-aperiodic}
    Let $X\subseteq\Acal^{\Z^d}$ be a self-similar subshift 
    satisfying $\shiftclosure{\omega(X)}=X$
    for some expansive $d$-dimensional morphism
    $\omega:\Acal\to\Acal^{\Z^d}$.
    If $\omega$ is recognizable in $X$, then $X$ is aperiodic.
\end{proposition}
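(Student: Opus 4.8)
The plan is to argue by contradiction, leveraging Lemma~\ref{lem:aperiodic-implies-aperiodic}(ii), which already does most of the work once we know that $X$ itself is aperiodic. So the real content is to establish that a nonempty self-similar subshift $X=\overline{\omega(X)}^\sigma$ with $\omega$ expansive and recognizable cannot contain a periodic configuration. Suppose for contradiction that some $x\in X$ satisfies $\sigma^\bp x = x$ for a nonzero period vector $\bp\in\Z^d\setminus\{\zero\}$.

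The key idea is an infinite-descent / unbounded-period argument using desubstitution. Since $X=\overline{\omega(X)}^\sigma$, the configuration $x$ admits a centered $\omega$-representation $(\bk_1,x^{(1)})$ with $x^{(1)}\in X$ and $\zero\le\bk_1<\shape(\omega(x^{(1)}_\zero))$. I would first check that recognizability forces $x^{(1)}$ to itself be periodic, with a controlled period: applying the shift-by-$\bp$ argument as in the proof of Lemma~\ref{lem:aperiodic-implies-aperiodic}(ii), the representation $(\bp+\bk_1,x^{(1)})$ of $x$ must fail to be centered, so some nonzero $\bq_1$ makes $(\bk_1',\sigma^{\bq_1}x^{(1)})$ centered, and recognizability then yields $\sigma^{\bq_1}x^{(1)}=x^{(1)}$; moreover by tracking how the image shapes tile $\Z^d$ one gets that $\bq_1$ is, roughly, $\bp$ rescaled by the morphism — in particular $\|\bq_1\|\le\|\bp\|$ and $\bq_1\ne\zero$. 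Iterating, $x^{(1)}=\overline{\omega(X)}^\sigma$ too, so $x^{(1)}$ has a centered $\omega$-representation $(\bk_2,x^{(2)})$ with $x^{(2)}\in X$ periodic of period $\bq_2\ne\zero$, and so on, producing a sequence $x^{(n)}\in X$ with nonzero periods $\bq_n$.

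The main obstacle — and the place where expansiveness enters crucially — is to derive a contradiction from this infinite sequence of periodic preimages. The point is that expansiveness says iterated images of letters grow without bound in every coordinate direction, which dually means the desubstitution map contracts distances: a period vector $\bp$ of $x$ forces a period vector of $x^{(1)}$ whose length is essentially $\bp$ divided by the (minimal) expansion factor of $\omega$. After finitely many desubstitution steps the period vector $\bq_n$ would have to have length strictly less than $1$, hence equal $\zero$ — contradicting $\bq_n\ne\zero$. Concretely, I would make this quantitative by bounding $\|\bq_{n+1}\|$ in terms of $\|\bq_n\|$: since a centered $\omega$-representation writes one period of the image-level configuration as a concatenation of images $\omega(x^{(n+1)}_\Bm)$ over one period of $x^{(n+1)}$, and each such image has shape with all coordinates $\ge 1$ (indeed eventually $>K$ for any $K$ by expansiveness after passing to a power $\omega^m$, which is still recognizable and still satisfies $\overline{\omega^m(X)}^\sigma=X$), we get $\|\bq_{n+1}\|\le\|\bq_n\|$ with strict decrease forced once the relevant image shapes exceed $1$. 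Replacing $\omega$ by a suitable power $\omega^m$ at the outset guarantees that drop happens in one step, so $\|\bq_1\|<\|\bp\|$ already, and induction terminates.

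To finish, I would assemble the pieces: replace $\omega$ by $\omega^m$ (justified because $\Xcal_\omega$-type self-similarity and recognizability are inherited by powers, and a power of an expansive morphism is expansive), run the single descent step to get that any period of a configuration in $X$ forces a strictly shorter period of another configuration in $X$, and conclude by well-ordering of $\N$ that no nonzero period can exist — so $X$ is aperiodic. This is exactly the hypothesis needed to apply Lemma~\ref{lem:aperiodic-implies-aperiodic}(ii) with $Y=\overline{\omega(X)}^\sigma=X$, giving that $X$ is aperiodic (which here is the conclusion itself), completing the proof. The subtle bookkeeping — getting the precise relation between the period of $y$ and the period of its centered preimage, and ensuring the preimage period is genuinely nonzero — is the step I expect to require the most care.
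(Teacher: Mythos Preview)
Your descent idea is sound and can be completed, but it takes a longer route than the paper does, and your framing around Lemma~\ref{lem:aperiodic-implies-aperiodic}(ii) is circular: that lemma assumes $X$ is aperiodic, which is precisely the conclusion here, so invoking it at the start or the end adds nothing. The real content of your argument is the descent itself, and there the bookkeeping you flag as ``the step requiring the most care'' is genuinely delicate --- you need to show that the period $\bq$ of the preimage satisfies a strict coordinate-wise bound in terms of $\bp$ and the minimal image shapes, and that the dichotomy at each step is ``either two distinct centered representations (contradiction) or a strictly shorter nonzero period.''

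The paper sidesteps all of this. Both arguments begin by passing to a power $\omega^m$ using expansiveness so that $\shape(\omega^m(a))>\bp$ for every letter $a$. But instead of iterating desubstitutions, the paper takes a single centered $\omega^m$-representation $(\bk,x)$ of the periodic $y$ and observes that, since the image block $\omega^m(x_\zero)$ is larger than $\bp$, one can choose $\bu$ with both $\zero\le\bu<\shape(\omega^m(x_\zero))$ and $\zero\le\bu+\bp<\shape(\omega^m(x_\zero))$. Then $(\bu,x)$ and $(\bu+\bp,x)$ are two \emph{distinct} centered $\omega^m$-representations of the same point $\sigma^{\bu-\bk}y$ (using $\sigma^\bp y=y$), contradicting recognizability of $\omega^m$ and hence of $\omega$. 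This is a one-step argument with no descent and no need to track how periods transform under preimages; your approach eventually reaches the same contradiction, but only after an induction whose base case is essentially the paper's whole proof.
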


\begin{proof}
    Suppose that there exists a periodic configuration $y\in X$ with
    period $\bp\in\Z^d\setminus\{(0,0)\}$ satisfying $\sigma^\bp y=y$.
    Since $\omega$ is expansive,
    let $m\in\N$ such that the shape of the image of every letter
    $a\in\Acal$ by $\omega^m$ is large enough, that is,
    $\shape(\omega^m(a))>\bp$ for every
    letter $a\in\Acal$.
    By hypothesis,
    every $y\in X$ has an $\omega$-representation. 
    Recursively, there
    exists an $\omega^m$-representation $(\bk,x)$ of $y$ satisfying
    $y=\sigma^\bk\omega^m(x)$.
    We may assume that it is centered since $X$ is shift-invariant.
    By definition of centered representation,
    for every $\bu\in\Z^d$ such that $\zero\leq\bu<\shape(\omega^m(x_\zero))$,
    $(\bu,x)$ is a centered $\omega^m$-representation of 
    $\sigma^\bu\omega^m(x)=\sigma^{\bu-\bk}y$.
    By the choice of $m$, there exists $\bu\in\Z^d$ such that
    $\zero\leq\bu<\shape(\omega^m(x_\zero))$
    and
    $\zero\leq\bu+\bp<\shape(\omega^m(x_\zero))$.
    Therefore
    $(\bu,x)$ is a centered $\omega^m$-representation of 
    $\sigma^\bu\omega^m(x)=\sigma^{\bu-\bk}y$
    and
    $(\bu+\bp,x)$ is a centered $\omega^m$-representation of 
    $\sigma^{\bu+\bp}\omega^m(x)
    =\sigma^{\bu+\bp-\bk}y
    =\sigma^{\bu-\bk}\sigma^\bp y
    =\sigma^{\bu-\bk}y$.
    Therefore, $\omega^m$ is not recognizable
    which implies that $\omega$ is not recognizable
    which is a contradiction.
    We conclude that there is no periodic configuration $y\in X$.
\end{proof}

\begin{exo}[label={exo:representations}]
    Let $\Phi:\Zrange{15}\to\Zrange{15}^{*^2}$
    be the morphism defined in Exercise~\ref{exo:easy-image-by-Phi}.
    Find periodic configurations
    $x,y\in\Zrange{15}^{\Z^2}$
    and $\bk\in\Z^2$
    such that 
    \begin{itemize}
    \item $(\bk,x)$ is a $\Phi$-representation of $y$,
    \item $(\bk,x)$ is a centered $\Phi$-representation of $y$,
    \item $(\bk,x)$ is a $\Phi$-representation of $y$ which is not centered.
    \end{itemize}
\end{exo}

\begin{exo}[label={exo:more-one-centered-representations}]
    Does there exist a configuration $y\in\Zrange{15}^{\Z^2}$ that
    has more than one centered $\Phi$-representation $(\bk,x)$ with
    $x\in\Zrange{15}^{\Z^2}$?
\end{exo}

\begin{exo}[label={exo:prove-recognizable-aperiodic}]
Let $\Phi:\Zrange{15}\to\Zrange{15}^{*^2}$
be the morphism defined in Exercise~\ref{exo:easy-image-by-Phi}.
    \begin{enumerate}%[\rm (i)]
        \item Prove that $\Phi$ is recognizable in $\Xcal_\Phi$.
        \item Prove that $\Xcal_\Phi$ is aperiodic.
    \end{enumerate}
\end{exo}

\subsection{Primitivity and minimality of self-similar subshifts}\label{sec:self-similar}

Substitutive shifts obtained from expansive and primitive morphisms are
interesting for their properties.
As in the one-dimensional case, we say that $\omega$ is \defn{primitive}
if there exists $m\in\N$ such that
for every $a,b\in\Acal$ the letter $b$ occurs in $\omega^m(a)$.

\begin{lemma}\label{lem:xcal_omega_minimal}
    Let $\omega:\Acal\to\Acal^{*^d}$ be an expansive and primitive $d$-dimensional
    morphism. Then $\Xcal_\omega$ is minimal, i.e., it contains no nonempty proper
    subshift.
\end{lemma}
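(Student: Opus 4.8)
The plan is to show that the language $\Lcal_\omega$ is \emph{uniformly recurrent} in the $d$-dimensional sense, i.e., that for every $u \in \Lcal_\omega$ there is a radius $R = R(u)$ such that $u$ occurs in every $d$-dimensional word of $\Lcal_\omega$ whose shape is at least $R$ in each coordinate. Granting this, minimality follows by a standard argument: if $Y \subseteq \Xcal_\omega$ is a nonempty subshift, pick any $x \in \Xcal_\omega$; every pattern $u$ of $x$ lies in $\Lcal_\omega$, hence occurs within every large enough window of any configuration of $Y$, so $x$ is in the shift-closure of the orbit of any point of $Y$; since $Y$ is closed and shift-invariant, $x \in Y$, giving $Y = \Xcal_\omega$.

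First I would fix $u \in \Lcal_\omega$. By definition of $\Lcal_\omega$, $u$ occurs as a subword of $\omega^n(a)$ for some letter $a$ and some $n \in \N$. By primitivity, choose $m$ so that every letter $b \in \A$ has $a$ occurring in $\omega^m(b)$; then $\omega^n(a)$ occurs in $\omega^{n+m}(b)$ for every $b$, so $u$ occurs in $\omega^{n+m}(b)$ for \emph{every} $b \in \A$. Now let $w$ be any word in $\Lcal_\omega$ of sufficiently large shape; by definition $w$ is a subword of $\omega^N(c)$ for some $c$ and some $N$. The key point is that if $w$ is large enough, then when we desubstitute one level — write $\omega^N(c) = \omega(\omega^{N-1}(c))$ and view $\omega^{N-1}(c)$ as a $d$-dimensional word over $\A$ — the region of $\omega^{N-1}(c)$ whose image under $\omega$ covers the occurrence of $w$ contains, as a subword, a full letter, in fact (iterating the desubstitution $n+m$ times) contains a subword of shape large enough that it is itself some $\omega^{n+m}(b')$-block... more carefully: I would iterate this, peeling off $n+m$ applications of $\omega$, so that $w$ (if its shape exceeds a bound depending only on $n+m$ and the maximal shape of $\omega$-images, via the expansiveness estimate on how shapes grow) must contain the full image $\omega^{n+m}(b)$ of at least one letter $b$ appearing in the $(n+m)$-fold desubstitution. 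Since $u$ occurs in $\omega^{n+m}(b)$ for every $b$, we conclude $u$ occurs in $w$.

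The hard part will be making the combinatorial bookkeeping of the last paragraph precise: I need a clean quantitative statement that a subword of $\omega^N(c)$ of shape at least $R$ in each coordinate, with $N \geq n+m$, must contain a translate of $\omega^{n+m}(b)$ for some letter $b$ of $\omega^{N-n-m}(c)$. This uses that $\omega^N(c) = \omega^{n+m}(\omega^{N-n-m}(c))$ is a concatenation (in the multidimensional $\odot^i$ sense) of the blocks $\omega^{n+m}(b)$, $b$ ranging over the letters of $\omega^{N-n-m}(c)$, tiled in a grid-like fashion — here one must be slightly careful because $\omega$ is non-uniform, so the blocks have varying shapes, but the rows and columns still align in the way guaranteed by the morphism property $\omega(u \odot^i v) = \omega(u) \odot^i \omega(v)$. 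Choosing $R$ to be twice the maximum over $b$ of $\max(\shape(\omega^{n+m}(b)))$ suffices: a window that wide in every direction, placed anywhere in the grid, must completely contain at least one block. Then I would assemble the pieces into the minimality conclusion as sketched above, invoking only that $\Xcal_\omega = \Xcal_{\Lcal_\omega}$ and that closed shift-invariant subsets are determined by their languages.
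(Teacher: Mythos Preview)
Your argument is correct and is precisely the standard uniform-recurrence argument that the paper invokes but does not spell out: the paper's own proof of this lemma is a one-line citation (``minimal since $\omega$ is primitive using standard arguments \cite[\S 5.2]{MR2590264}''), so you have supplied exactly what the paper defers to the literature. One small point worth making explicit in your write-up: when you pass from ``$w$ is a subword of $\omega^N(c)$'' to the decomposition $\omega^N(c)=\omega^{n+m}(\omega^{N-n-m}(c))$, you need $N\geq n+m$; this is harmless because primitivity lets you embed $\omega^N(c)$ into $\omega^{N'}(c')$ for any $N'\geq N+m$, but it should be said.
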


\begin{proof}
    The substitutive shift of $\omega$ is well-defined since $\omega$ is expansive
    and it is minimal since $\omega$ is primitive 
    using standard arguments \cite[\S 5.2]{MR2590264}.
\end{proof}

The following two lemmas were proved in \cite{labbe_metallic_I_2024}.
We reproduce their proof here for completeness.

\begin{lemma}\label{lem:substitutive-contains-self-similar-part}
    {\rm\cite[Lemma~10.1]{labbe_metallic_I_2024}}
    Let $\omega:\Acal\to\Acal^{*^d}$ be an expansive and primitive $d$-dimensional
    morphism. Let $X\subseteq\Acal^{\Z^d}$ be a nonempty subshift 
    such that $X=\shiftclosure{\omega(X)}$. Then 
    $\Xcal_\omega\subseteq X$.
\end{lemma}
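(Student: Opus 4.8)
The plan is to show that every $d$-dimensional word in $\Lcal_\omega$ is allowed in $X$; since $\Xcal_\omega=\Xcal_{\Lcal_\omega}$ consists exactly of those configurations whose language is contained in $\Lcal_\omega$, and since $X$ is a subshift (hence contains every configuration all of whose finite subwords lie in $\Lcal_X$), the inclusion $\Lcal_\omega\subseteq\Lcal_X$ immediately gives $\Xcal_\omega\subseteq X$. So the whole problem reduces to proving $\Lcal_\omega\subseteq\Lcal_X$.

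First I would note that $X$ is nonempty, so fix any $x\in X$. Since $X=\overline{\omega(X)}^\sigma$ and, iterating, $X=\overline{\omega^n(X)}^\sigma$ for every $n\in\N$, every configuration of $X$ — in particular $x$ — contains, up to a shift, arbitrarily large central blocks of the form $\omega^n(x')$ for configurations $x'\in X$. Consequently, for each $n\in\N$ and each letter $b$ occurring in some $x'\in X$, the word $\omega^n(b)$ is a subword of some element of $X$, i.e.\ $\omega^n(b)\in\Lcal_X$. The key step is then to use \textbf{primitivity}: there is $m\in\N$ such that for every $a,b\in\A$ the letter $b$ occurs in $\omega^m(a)$. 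Pick one letter $b_0$ that actually occurs in $x$ (it exists since $x\in X$ is a configuration on the alphabet $\A$). By primitivity, for every $a\in\A$ the letter $a$ occurs in $\omega^m(b_0)$, and since $\omega^m(b_0)\in\Lcal_X$ we get $a\in\Lcal_X$ for all $a\in\A$; iterating, $\omega^{m}(a)$ occurs in $\omega^{2m}(b_0)\in\Lcal_X$, and more generally $\omega^{jm}(a)\in\Lcal_X$ for every $j\ge 1$ and every $a\in\A$. Combined with expansiveness, which guarantees $\min(\shape(\omega^{jm}(a)))\to\infty$, we conclude that for every $a\in\A$ and every $n$, the word $\omega^n(a)$ is a subword of $\omega^{jm}(a')$ for suitable $j$ and $a'$ (take $j$ large so that $\omega^{jm}(a')$ contains $\omega^{jm-n}$ of some letter whose $\omega^n$-image one wants, using primitivity again to place the right letter), hence $\omega^n(a)\in\Lcal_X$.

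Finally, an arbitrary $u\in\Lcal_\omega$ is by definition a $d$-dimensional subword of $\omega^n(a)$ for some $a\in\A$ and $n\in\N$; since $\omega^n(a)\in\Lcal_X$ and $\Lcal_X$ is factorial, $u\in\Lcal_X$. This proves $\Lcal_\omega\subseteq\Lcal_X$ and therefore $\Xcal_\omega\subseteq X$. The main obstacle is the bookkeeping in the key step: turning ``$\omega^n(a)\in\Lcal_X$'' into a clean statement requires combining the fact that large $\omega^n$-blocks of elements of $X$ appear in $X$ with a primitivity argument that lets us reach every letter $a$ from a single letter $b_0$ known to occur in $X$; once that is set up, expansiveness only serves to ensure the blocks grow, and factoriality of $\Lcal_X$ does the rest. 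A slightly slicker alternative would be to invoke Lemma~\ref{lem:xcal_omega_minimal}: $\Xcal_\omega$ is minimal, so it suffices to exhibit \emph{one} configuration of $\Xcal_\omega$ inside $X$, or even just to show $X\cap\Xcal_\omega\neq\varnothing$ together with the fact that $X$ is closed and shift-invariant — but proving that intersection is nonempty still goes through essentially the same ``large blocks'' analysis, so I would present the direct language-inclusion argument above.
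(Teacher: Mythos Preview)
Your proposal is correct and follows essentially the same approach as the paper: reduce to the language inclusion $\Lcal_\omega\subseteq\Lcal_X$, use the self-similarity $X=\overline{\omega^m(X)}^\sigma$ to see that $\omega^m(b_0)\in\Lcal_X$ for any letter $b_0$ occurring in $X$, and then invoke primitivity to reach every letter. The only simplification the paper makes is that once every letter $a$ lies in $\Lcal_X$, the relation $\Lcal_X=\overline{\omega(\Lcal_X)}^{Fact}$ immediately yields $\omega^n(a)\in\Lcal_X$ for all $n$ and all $a$, so the $jm$-versus-$n$ bookkeeping you worry about in the key step is unnecessary.
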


\begin{proof}
    We show that $\Lcal_\omega\subseteq\Lcal(X)$
    which implies that $\Xcal_\omega\subseteq X$.
    Let $u\in\Lcal_\omega$.
    From the definition of $\Lcal_\omega$,
    there exists $b\in\Acal$ and $n\in\N$ such that
    $u$ is a $d$-dimensional subword of $\omega^n(b)$.

    Since $X$ is nonempty,
    there exists a letter $a\in\Acal\cap\Lcal(x)$.
    From the primitivity of $\omega$, 
    there exists $m\geq1$ such that
    $\omega^m(a)$ contains an occurrence of the letter $b$.
    Therefore $\omega^{m+n}(a)$ contains an occurence of $u$.

    Since $X$ is self-similar, its
    language is also self-similar satisfying
    $\Lcal(X)=\factorialclosure{\omega(\Lcal(X))}$.
    Since $\omega(\Lcal(X))\subset\Lcal(X)$
    and $a\in\Lcal(x)$,
    for every integer $N\geq1$,
    the $d$-dimensional word $\omega^N(a)$ is in the
    language $\Lcal(X)$.
    Thus, we have
    \[
        u 
        \in
        \Lcal(\omega^n(b))
        \subset
        \Lcal(\omega^{m+n}(a))
        \subset
        \Lcal(X).
    \]
    We conclude that $\Lcal_\omega\subseteq\Lcal(X)$ and
    $\Xcal_\omega\subseteq X$.
\end{proof}

Recall that $\Xcal_\omega=\shiftclosure{\omega(\Xcal_\omega)}$.
Thus from Lemma~\ref{lem:xcal_omega_minimal} and
Lemma~\ref{lem:substitutive-contains-self-similar-part},
when $\omega$ is expansive and primitive, then 
$\Xcal_\omega$ is the smallest nonempty subshift
$X\subseteq\Acal^{\Z^d}$ satisfying $X=\shiftclosure{\omega(X)}$.
The next result provides a criterion
for the minimality of a self-similar subshift $X$ satisfying
$X=\shiftclosure{\omega(X)}$.

To achieve this goal, it is convenient to
consider the $2\times 2$ patterns as well as the domino patterns
of shape $1\times 2$ and $2\times 1$.
We use these dominoes to define two equivalence relations on the alphabet $\Acal$.
Formally, the vertical dominoes of shape $1\times 2$ 
appearing in the language $\Lcal(\Xcal_\omega)$
define an equivalence relation
$\equiv_2$ on $\Acal$ given as the reflexive, symmetric and transitive closure
of the pairs 
$\{(a,c)\in\Acal\times\Acal\mid 
 \left(\begin{smallmatrix} a\\ c \end{smallmatrix}\right)
 \in\Lcal(\Xcal_\omega)\}$.
Informally, $a\equiv_2 c$ for some letters $a,c\in\Acal$ means that letters $a$ and $c$
may appear in the same column in some configuration of $\Xcal_\omega$.
Similarly, the horizontal dominoes of shape $2\times 1$ 
appearing in the language $\Lcal(\Xcal_\omega)$
define an equivalence relation
$\equiv_1$ on $\Acal$ given as the reflexive, symmetric and transitive closure
of the pairs 
$\{(a,b)\in\Acal\times\Acal\mid 
 \left(\begin{smallmatrix} a& b \end{smallmatrix}\right)
 \in\Lcal(\Xcal_\omega)\}$.

Using these two equivalence relations $\equiv_1$ and $\equiv_2$ on the alphabet $\Acal$,
we consider the following graphs:
\begin{itemize}
    \item
Let $G_\omega^{2\times2}=(V_\omega^{2\times2},E_\omega^{2\times2})$ be the
directed graph whose vertices and edges are
\begin{align*}
    V_\omega^{2\times2} &= \left\{
        \left(\begin{smallmatrix}
            a&b\\
            c&d
        \end{smallmatrix}\right)
        \in\Acal^{2\times 2}
        \mid
        a\equiv_1 b,
        c\equiv_1 d,
        a\equiv_2 c,
        b\equiv_2 d
        \right\},\\
    E_\omega^{2\times2} &= \left\{
        \left(\begin{smallmatrix}
            e&f\\
            g&h
        \end{smallmatrix}\right)
        \to
        \left(\begin{smallmatrix}
            a&b\\
            c&d
        \end{smallmatrix}\right)
        \middle|
        \begin{array}{l}
        a \text{ is the lower right letter of } \omega(e),\\
        b \text{ is the lower left letter of }  \omega(f),\\
        c \text{ is the top right letter of }   \omega(g),\\
        d \text{ is the top left letter of }    \omega(h)\\
        \end{array}
        \right\}.
\end{align*}
    \item
Let $G_\omega^{2\times1}=(V_\omega^{2\times1},E_\omega^{2\times1})$ be the
directed graph whose vertices and edges are 
\begin{align*}
    V_\omega^{2\times1} &= \left\{
        \left(\begin{smallmatrix}
            a&b
        \end{smallmatrix}\right)
        \in\Acal^{2\times1}
        \mid
        a\equiv_1 b
        \right\},\\
    E_\omega^{2\times1} &= \left\{
        \left(\begin{smallmatrix}
            e&f
        \end{smallmatrix}\right)
        \to
        \left(\begin{smallmatrix}
            a&b
        \end{smallmatrix}\right)
        \middle|
        \begin{array}{l}
        \text{there exists an integer $j$ such that $0\leq j < \height(\omega(e))$ and}\\
        a \text{ is the letter in the $j$-th row in the right-most column of } \omega(e),\\
        b \text{ is the letter in the $j$-th row in the left-most column of } \omega(f)\\
        \end{array}
        \right\}.
\end{align*}
    \item
Let $G_\omega^{1\times2}=(V_\omega^{1\times2},E_\omega^{1\times2})$ be the
directed graph whose vertices and edges are 
\begin{align*}
    V_\omega^{1\times2} &= \left\{
        \left(\begin{smallmatrix}
            a\\
            c
        \end{smallmatrix}\right)
        \in\Acal^{1\times2}
        \mid
        a\equiv_2 c
        \right\},\\
    E_\omega^{1\times2} &= \left\{
        \left(\begin{smallmatrix}
            e\\
            g
        \end{smallmatrix}\right)
        \to
        \left(\begin{smallmatrix}
            a\\
            c
        \end{smallmatrix}\right)
        \middle|
        \begin{array}{l}
        \text{there exists an integer $i$ such that $0\leq i < \width(\omega(e))$ and}\\
        a \text{ is the letter in the $i$-th column in the bottom-most row of } \omega(e),\\
        c \text{ is the letter in the $i$-th column in the top-most row of } \omega(g)\\
        \end{array}
        \right\}.
\end{align*}
\end{itemize}
    Finally, for every directed graph $G=(V,E)$, we define
    \[
        \RecurrentVertices(G)
            =
            \{v\in V
            \mid
            v \text{ belongs to a cycle of }G\}.
    \]
    A vertex is a recurrent vertex in a graph if and only
    if it belongs to a biinfinite path in the graph,
    thus the terminology of \emph{recurrent}.

    The following result from \cite{labbe_metallic_I_2024}
    allows to conclude that a self-similar subshift is minimal 
    even when the 2-dimensional substitution admits more than one self-similar
    subshift (some made of configurations which are not uniformly recurrent).
    We reproduce its proof here for completeness.

\begin{lemma}\label{lem:criterion-for-minimality}
    {\rm\cite[Lemma~10.4]{labbe_metallic_I_2024}}
    Let $X=\shiftclosure{\omega(X)}$ be a nonempty self-similar subshift
    where $\omega:\Acal\to\Acal^{*^d}$ is an expansive and primitive $2$-dimensional
    morphism.
    The following are equivalent:
    \begin{enumerate}[(i)]
        \item $\Lcal(X)\cap\RecurrentVertices(G^s_\omega)\subset\Lcal(\Xcal_\omega)$
              for every shape $s\in\{2\times2, 2\times1, 1\times2\}$,
    \item $X=\Xcal_\omega$,
    \item $X$ is minimal.
    \end{enumerate}
\end{lemma}

% An element $u\in\Acal^\bn$ is called a
% \defn{$d$-dimensional word} of \defn{shape} $\bn=(n_1,\dots,n_d)\in\N^d$
% on the alphabet~$\Acal$.
% We use the notation $\shape(u)=\bn$ when necessary.

\begin{proof}
    Assume that $X=\overline{\omega(X)}^{\sigma}$ for some $\varnothing\neq X\subseteq\Acal^{\Z^d}$.

    (i) $\implies$ (ii)
    From Lemma~\ref{lem:substitutive-contains-self-similar-part},
    we have $\Xcal_\omega\subseteq X$.
    Let $z\in\Lcal(X)$. 
    We want to show that $z\in\Lcal(\Xcal_\omega)$. 
    Since $\omega$ is expansive,
    let $m\in\N$ such that the image of every letter
    $a\in\Acal$ by $\omega^m$ is larger than $z$, that is, 
    $\shape(\omega^m(a))\geq\shape(z)$ for all $a\in\Acal$.
    We have $z\in\Lcal(X)=\Lcal\left(\omega^m(\Lcal(X))\right)$.
    By the choice of $m$, $z$ cannot overlap more than two blocks
    $\omega^m(a)$ in the same direction. Thus, there exists a word $u\in\Lcal(X)$ of
    shape 
    $1\times 1$,
    $2\times 1$,
    $1\times 2$ or
    $2\times 2$ such that $z$ is a subword of $\omega^m(u)$.
    If $u$ is of shape $1\times 1$, then $z\in\Lcal(\Xcal_\omega)$.
    We may assume that the word $u$ has the smallest possible rectangular shape
    $s\in \{2\times 1, 1\times 2, 2\times 2\}$.

    We have $u\in V_\omega^{s}$. Since $u\in\Lcal(X)$ and $X$ is self-similar,
    there exists a sequence $(u_k)_{k\in\N}$ with $u_k\in V_\omega^{s}\cap\Lcal(X)$ 
    for all $k\in\N$
    such that
    \[
        \cdots\rightarrow
        u_{k+1} \rightarrow 
        u_k \rightarrow 
        \cdots\rightarrow
        u_1 \rightarrow u_0 = u
    \]
    is a left-infinite path in the graph $G_\omega^{s}$.
    Since $V_\omega^{s}$ is finite, there exist some $k,k'\in\N$ with $k<k'$
    such that $u_k=u_{k'}$.
    Thus $u_k\in\RecurrentVertices(G^s_\omega)$
    and $u$ is a subword of $\omega^k(u_k)$.
    From the hypothesis, we have $u_k\in\Lcal(\Xcal_\omega)$.
    Since $\omega$ is primitive, there exists $\ell$ such that 
    $u_k$ is a subword of $\omega^\ell(a)$ for every $a\in\Acal$.
    Therefore, $z$ is a subword of $\omega^{m+k+\ell}(a)$ for every $a\in\Acal$.
    Then $z\in\Lcal(\Xcal_\omega)$ and $\Lcal(X)\subseteq\Lcal(\Xcal_\omega)$.
    Thus $X\subseteq\Xcal_\omega$ and $X=\Xcal_\omega$.

    (ii) $\implies$ (i)
    If $X=\Xcal_\omega$,
    then $\Lcal(X)=\Lcal(\Xcal_\omega)$.
    Thus $\Lcal(X)\cap\RecurrentVertices(G^s_\omega)\subset\Lcal(X)=\Lcal(\Xcal_\omega)$
    for every shape $s\in\{2\times2, 2\times1, 1\times2\}$.

    (ii) $\implies$ (iii)
    From Lemma~\ref{lem:xcal_omega_minimal}, the substitutive shift
    $\Xcal_\omega$ is minimal.

    (iii) $\implies$ (ii)
    From Lemma~\ref{lem:substitutive-contains-self-similar-part},
    we have $\Xcal_\omega\subseteq X$.
    Since $X$ is minimal, we conclude that $\Xcal_\omega=X$.
\end{proof}

\begin{exo}[label={exo:prove-primitive-minimal}]
Let $\Phi:\Zrange{15}\to\Zrange{15}^{*^2}$
be the morphism defined in Exercise~\ref{exo:easy-image-by-Phi}.
    \begin{enumerate}%[\rm (i)]
        \item Prove that $\Phi$ is primitive.
        \item Prove that $\Xcal_\Phi$ is minimal.
    \end{enumerate}
\end{exo}

\begin{exo}[label={exo:recurrent-vertices}]
    Compute the sets
    $\RecurrentVertices(G^s_\Phi)$
    for every shape $s\in\{2\times2, 2\times1, 1\times2\}$.
    Conclude that the 2-dimensional substitution $\Phi$ does not have 
    a unique self-similar subshift $X=\shiftclosure{\Phi(X)}$.
\end{exo}

\subsection{Markers}\label{subsection:markers}

The goal of this section is to prove Theorem~\ref{thm:if-markers-desubstitute}
which states that we can desubstitute a subshift provided that its alphabet
contains a subset of markers.
Markers are such that they appear on non-consecutive layers in the
configurations of the subshift, see Definition~\ref{def:markers}.
The results are stated for arbitrary dimension since their proofs are
independent of the dimension, but we will use them in the 2-dimensional case
afterwards.

We now define the notion of markers for subshifts $X\subset\Acal^{\Z^d}$
and prove that their presence allows to desubstitute uniquely the configurations in $X$ 
using a $d$-dimensional morphism.
Originally, those results were proved for $d=2$ in order
to desubstitute configurations from Wang shifts, see
\cite{MR3978536} and \cite{MR4226493}.
It turns out that the notion of markers is more general and the results hold in general subshifts 
$X\subset\Acal^{\Z^d}$.

Recall that if $w:\Z^d\to\Acal$ is a configuration and $a\in\Acal$ is a letter, then
$w^{-1}(a)\subset\Z^d$ is the set of positions where the letter $a$ appears in
$w$.

\begin{definition}\label{def:markers}
    Let $\Acal$ be an alphabet
    and $X\subset\Acal^{\Z^d}$ be a subshift.
    A nonempty subset $M\subset\Acal$ is called \defn{subset of markers in the
    direction $\be_i$}, with $i\in\{1,\dots,d\}$,
    if positions of the letters of $M$ in any configuration are nonadjacent $(d-1)$-dimensional
    layers orthogonal to $\be_i$, that is,
    for all configurations $w\in X$
    there exists $P\subset\Z$ such that
    the positions of the markers satisfy
    \begin{equation*}
        w^{-1}(M) = P\be_i +\sum_{k\neq i} \Z\be_k
        \quad
        \text{ with }
        \quad
        1\notin P-P
    \end{equation*}
    where $P-P=\{b-a\mid a\in P, b\in P\}$ is the set of differences
    between elements of $P$.
\end{definition}

In Figure~\ref{fig:Phi-on-seed-8-12-1-6}, we may observe that not every letter
appear at every row. In particular, the letters in the set $\{0,1,2,3,4,5,6\}$
appear on nonadjacent rows. Thus, this is an example of a subset of markers
in the direction $\be_2$ (see Exercise~\ref{exo:marker-e2-in-XPhi}).

Note that it follows from the definition that a subset of markers is a proper
subset of $\Acal$ as the case $M=\Acal$ is impossible.

Proving that a subset $M\subset\Acal$ is a subset of
markers uses very local observations, namely the set of dominoes in the
language of the subshift. It leads to the following criterion.

\begin{lemma}\label{lem:criteria-markers}
    Let $\Acal$ be an alphabet
    and $X\subset\Acal^{\Z^d}$ be a subshift.
    A nonempty subset $M\subset\Acal$ is a subset of markers in the
    direction $\be_i$ if and only if
    \begin{equation*}
        M\odot^i M, \qquad
        M\odot^k (\Acal\setminus M), \qquad
        (\Acal\setminus M) \odot^k M
    \end{equation*}
    are forbidden in $X$ for every $k\in\{1,\dots,d\}\setminus\{i\}$.
\end{lemma}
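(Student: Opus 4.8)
The plan is to prove both implications directly from the definitions, using only the combinatorics of dominoes together with one topological fact: for a fixed integer $n$, the layer $L_n=\{\bn\in\Z^d : n_i=n\}$ is connected under the unit steps $\pm\be_k$ with $k\neq i$.

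For the forward implication I would fix a configuration $w\in X$ and use that $w^{-1}(M)=P\be_i+\sum_{k\neq i}\Z\be_k$ for some $P\subseteq\Z$ with $1\notin P-P$, then argue by contradiction that no domino from the three families can occur in $w$. A domino of $M\odot^i M$ at a position $\bp$ would put both $\bp$ and $\bp+\be_i$ in $w^{-1}(M)$, so $p_i$ and $p_i+1$ would both lie in $P$, contradicting $1\notin P-P$. A domino of $M\odot^k(\A\setminus M)$ at $\bp$ (with $k\neq i$) would force $p_i\in P$ via its first cell, hence $\bp+\be_k\in w^{-1}(M)$ since $\bp+\be_k$ has the same $i$-th coordinate, contradicting that its second cell lies in $\A\setminus M$; the family $(\A\setminus M)\odot^k M$ is handled symmetrically by reading the cell in $M$ first.

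For the converse I would assume the three families forbidden, fix $w\in X$, and first establish the key claim that $w_\bn\in M$ and $\bn'\in L_{n_i}$ imply $w_{\bn'}\in M$. By connectivity of $L_{n_i}$ it suffices to check this for $\bn'=\bn\pm\be_k$ with $k\neq i$: the case $w_{\bn+\be_k}\in\A\setminus M$ would exhibit a domino of $M\odot^k(\A\setminus M)$ at $\bn$, and the case $w_{\bn-\be_k}\in\A\setminus M$ would exhibit a domino of $(\A\setminus M)\odot^k M$ at $\bn-\be_k$, both forbidden; the claim then follows by induction along paths inside $L_{n_i}$. With the claim in hand I would set $P=\{n\in\Z : w_\bn\in M\text{ for some }\bn\in L_n\}$; the claim gives $w^{-1}(M)=\bigcup_{n\in P}L_n=P\be_i+\sum_{k\neq i}\Z\be_k$, and if $n,n+1\in P$ then $n\be_i$ and $n\be_i+\be_i$ both lie in $w^{-1}(M)$, producing a forbidden domino of $M\odot^i M$, so $1\notin P-P$. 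Since $w$ was arbitrary, $M$ is a subset of markers in direction $\be_i$.

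No single step is hard; the point that needs care is the logical order in the converse. The set $P$ must be defined through the occurrence of a marker \emph{somewhere} in a layer, not at a prescribed transverse position, so one has to derive the full-layer structure of $w^{-1}(M)$ from the two mixed families $M\odot^k(\A\setminus M)$ and $(\A\setminus M)\odot^k M$ first, and only then use $M\odot^i M$ to rule out two consecutive layers.
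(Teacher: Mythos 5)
Your proof is correct and follows essentially the same route as the paper's: the forward direction reads the three forbidden dominoes off the layer structure $w^{-1}(M)=P\be_i+\sum_{k\neq i}\Z\be_k$ with $1\notin P-P$, and the converse first uses the two mixed families to show markers fill complete layers and only then uses $M\odot^i M$ to get non-adjacency. The only difference is that you spell out the connectivity-of-layers induction that the paper leaves implicit, which is a welcome (but not essential) refinement.
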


\begin{proof}
    Suppose that $M\subset\Acal$ is a subset of markers in the
    direction $\be_i$.
    For any configuration $w\in X$,
    there exists $P\subset\Z$ such that
        $w^{-1}(M) = P\be_i +\sum_{k\neq i} \Z\be_k$ with 
        $1\notin P-P$.
    In any configuration $w\in X$ such that $w(p)\in M$,
    then $w(p\pm\be_k)\in M$ also belongs to $M$ for every $k\neq i$.
    Therefore, $M\odot^k (\Acal\setminus M)$ and $(\Acal\setminus M) \odot^k M$
    are forbidden in $X$ for every $k\neq i$.
    Moreover, the fact that
        $1\notin P-P$ implies that
        $M\odot^k M$ is forbidden in $X$.

    Conversely, suppose that
        $M\odot^i M$,
        $M\odot^k (\Acal\setminus M)$ and
        $(\Acal\setminus M) \odot^k M$
    are forbidden in $X$ for every $k\neq i$.
    The last two conditions implies that 
    in any configuration $w\in X$ such that $w(p)\in M$,
    then $w(p\pm\be_k)\in M$ also belongs to $M$ for every $k\neq i$.
    Therefore letters in $M$ appear as complete layers in $w$, that is,
        $w^{-1}(M) = P\be_i +\sum_{k\neq i} \Z\be_k$
        for some $P\subset\Z$.
    Since $M\odot^i M$ is forbidden in $X$, it means that
    the layers are nonadjacent, or equivalently, $1\notin P-P$.
    We conclude that $M$ is a subset of markers in the
    direction $\be_i$.
\end{proof}

If $a\odot^i b$ is a domino in the direction $\be_i$,
we say that $a$ is \defn{on the left} position and $b$ is \defn{on the right}
position in the domino.

The existence of a subset of markers allows to desubstitute a subshift by
``merging'' each marker to the letter on its right (or on its left).
This procedure creates a substitution with a specific form sending a letter on
a letter or a domino. 
In the next lemma, we provide a sufficient condition for
such substitutions to be recognizable.

\begin{lemma}\label{lem:recognizable}
    Let $d\geq1$ and $i$ such that $1\leq i\leq d$.
Let $\omega:\Bcal\to\Acal^{\Z^d}$ be a $d$-dimensional morphism such that
the image of letters are letters or dominoes in the direction $\be_i$.
If $\omega|_\Bcal$ is injective and there exists a subset $M\subset \Acal$ such that
\begin{equation}\label{eq:markers-on-right}
    \omega(\Bcal)
    \subseteq (\Acal\setminus M)
    \cup \left((\Acal\setminus M)\odot^i M\right),
\end{equation}
or
\begin{equation}\label{eq:markers-on-left}
    \omega(\Bcal)
    \subseteq (\Acal\setminus M)
    \cup \left(M\odot^i (\Acal\setminus M)\right),
\end{equation}
    then $\omega$ is recognizable in $\Bcal^{\Z^d}$.
\end{lemma}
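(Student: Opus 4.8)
The plan is to show directly that any centered $\omega$-representation of a configuration $y \in \A^{\Z^d}$ is uniquely determined by $y$ itself. Suppose $(\bk, x)$ and $(\bk', x')$ are two centered $\omega$-representations of the same $y \in \A^{\Z^d}$, i.e. $y = \sigma^\bk \omega(x) = \sigma^{\bk'} \omega(x')$ with $\zero \leq \bk < \shape(\omega(x_\zero))$ and $\zero \leq \bk' < \shape(\omega(x'_\zero))$ coordinatewise. Since the image of every letter is either a letter or a domino in direction $\be_i$, the shape of $\omega(a)$ is either $\UN$ or $\UN + \be_i$; hence in every coordinate $j \neq i$ we immediately get $k_j = k'_j = 0$, and the only freedom is in the $i$-th coordinate, where $k_i, k'_i \in \{0,1\}$ depending on whether $x_\zero$ (resp.\ $x'_\zero$) is sent to a letter or to a domino.

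The key step is to use the marker condition \eqref{eq:markers-on-right} (the argument for \eqref{eq:markers-on-left} is symmetric) to recover, purely from $y$, the partition of $\Z^d$ into the ``blocks'' $\omega(x_\bm)$. Under \eqref{eq:markers-on-right}, the image of a letter is either a single symbol from $\A \setminus M$, or a domino $(\A\setminus M)\odot^i M$; in both cases the block of $\omega(x_\bm)$ begins (in the $\be_i$-direction, reading in increasing $i$) with a symbol of $\A \setminus M$, and if it has length two it ends with a symbol of $M$. Therefore the set $y^{-1}(M)$ marks precisely those coordinates $\bm$ that are the ``second cell'' of a length-two block, and consequently a coordinate $\bm = (m_1,\dots,m_d)$ is the \emph{starting} cell of a block if and only if $y_{\bm - \be_i} \in M$ or ($\bm - \be_i$ is itself a starting cell of a length-one block, equivalently $y_{\bm-\be_i}\in\A\setminus M$ and $\bm-\be_i$ is a start). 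This recursion along each $\be_i$-line of $\Z^d$ is well-defined once we fix a single starting cell, and both representations $(\bk,x)$ and $(\bk',x')$ induce such a cut of $\Z^d$ that is compatible with the marker pattern of $y$; since the cut is forced from $y$ up to which coordinates are marked as ``starts'', and the requirement that $\bk$, $\bk'$ be centered pins down the block containing $\zero$, the two cuts coincide. This gives $\bk = \bk'$, and then, reading off each block of $y$ as an image $\omega(a)$ and using injectivity of $\omega|_\Bcal$, we recover $x_\bm = x'_\bm$ for every $\bm$, so $x = x'$.

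The main obstacle I anticipate is making the ``the cut is forced by $y$'' step fully rigorous: a priori the pattern $y^{-1}(M)$ along a given $\be_i$-line need not look like alternating marked/unmarked cells, since length-one blocks (images of letters in $\A\setminus M$) can occur consecutively. The clean way to handle this is to observe that whenever $y_\bm \in M$, the cell $\bm$ can only be the second cell of a domino block $(\A\setminus M)\odot^i M$ — it can never be a block-start — because $\omega(\Bcal)$ contains no symbol of $M$ in the first position of any image; hence $\bm-\be_i$ is a block-start and $y_{\bm-\be_i}\in \A\setminus M$. Conversely any cell not of this form is a block-start. This local characterization, together with centeredness fixing which block contains the origin, determines $\bk$ uniquely and hence $\bk=\bk'$; then $x=x'$ follows from $\omega|_\Bcal$ injective. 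I would also note in passing that the hypothesis that $\omega|_\Bcal$ is injective is exactly what is needed to pass from equality of blocks $\omega(x_\bm)=\omega(x'_\bm)$ to equality of preimages.
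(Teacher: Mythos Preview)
Your proposal is correct and follows essentially the same approach as the paper: reduce to $\bk,\bk'\in\{\zero,\be_i\}$, use the marker hypothesis to pin down $\bk$ from the value(s) of $y$ near the origin, and then use injectivity of $\omega|_\Bcal$ to conclude $x=x'$. Your observation that (under \eqref{eq:markers-on-right}) a position $\bm$ is a block-start if and only if $y_\bm\in\A\setminus M$ is precisely the local criterion that drives the paper's case analysis; the paper then handles $x=x'$ via a minimal-counterexample in $\Vert\cdot\Vert_\infty$, whereas you deduce it directly from the fact that the block decomposition of $\omega(x)=\omega(x')$ is determined by the configuration itself --- the two arguments are interchangeable.
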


\begin{proof}
    Let $(\bk,x)$ and $(\bk',x')$ be two
    centered $\omega$-representations of $y\in\Acal^{\Z^d}$ with
    $\bk,\bk'\in\Z^d$ and $x,x'\in\Bcal^{\Z^d}$.
    We want to show that they are equal.

    Since the image of a letter under $\omega$ is a letter or a domino in the
    direction $\be_i$, then $\bk,\bk'\in\{\zero,\be_i\}$.
    If $y_\zero\in M$, then $y_\zero$ appears as the left or right part of a
    domino and thus $\bk=\bk'=\be_i$
    if Equation~\eqref{eq:markers-on-right} holds
    or $\bk=\bk'=\zero$
    if Equation~\eqref{eq:markers-on-left} holds.

    Suppose now that $y_\zero\in \Acal\setminus M$. 
    If Equation~\eqref{eq:markers-on-right} holds,
    then $\bk=\bk'=\zero$.
    Suppose that Equation~\eqref{eq:markers-on-left} holds.
    By contradiction, suppose that $\bk\neq\bk'$ 
    and assume without loss of generality that
    $\bk=\zero$ and $\bk'=\be_i$.
    This means that $\omega(x'_\zero)=y_{-\be_i} \odot^i y_\zero$ is
    a domino in the direction $\be_i$. 
    Since $y_\zero\in \Acal\setminus M$, we must have that $y_{-\be_i}\in M$
    is a marker on the left.
    This is impossible as
    $\omega(x_{-\be_i})=y_{-\be_i}\in \Acal\setminus M$
    or
    $\omega(x_{-\be_i})=y_{-2\be_i} \odot^i y_{-\be_i}
    \in M\odot^i (\Acal\setminus M)$.
    Therefore, we must have $\bk=\bk'$ and
    $\omega(x)=\omega(x')$.

    Suppose by contradiction that $x\neq x'$.
    Let $\ba=(a_1,\dots,a_d)\in\Z^d$ be some minimal vector with respect to
    $\Vert\ba\Vert_\infty$ such that $x_\ba\neq x'_\ba$.
    From the minimality of the norm of $\ba$, we have that
    $\omega(x_\ba)$ occurs in $\omega(x)$
    at the same position as
    $\omega(x'_\ba)$ occurs in $\omega(x')$.
    If 
    $\omega(x_\ba)$ and
    $\omega(x'_\ba)$ have the same shape, then it implies
    that $\omega(x_\ba)=\omega(x'_\ba)$,
    which contradicts the injectivity of $\omega|_\Bcal$.
    Thus $\omega(x_\ba)$ and
    $\omega(x'_\ba)$ must have different shapes.
    Suppose without loss of generality that
    $\omega(x_\ba)\in\Acal $ and
    $\omega(x'_\ba)=b\odot^i c\in\Acal\odot^i\Acal$.
    We need to consider two cases: $a_i\geq 0$ and $a_i<0$.

    Suppose $a_i\geq 0$.
    We must have that Equation~\eqref{eq:markers-on-right} holds.
    We have $\omega(x_\ba)=b\in\Acal\setminus M$ and $c\in M$.
    But then $\omega(x_{\ba+\be_i})=c$
    or $\omega(x_{\ba+\be_i})=c\odot^i d$ for some 
    $c\in\Acal\setminus M$ and $d\in\Acal$ which is a contradiction.

    Suppose $a_i<0$.
    We must have that Equation~\eqref{eq:markers-on-left} holds.
    We have $\omega(x_\ba)=c\in\Acal\setminus M$ and $b\in M$.
    But then $\omega(x_{\ba-\be_i})=b$
    or $\omega(x_{\ba+\be_i})=d\odot^i b$ for some 
    $b\in\Acal\setminus M$ and
    $d\in\Acal$ which is a contradiction.
    We conclude that $x=x'$.
\end{proof}

The presence of markers allows to
desubstitute uniquely the configurations of a subshift.
There is even a choice to be made in the construction of the substitution. 
We may construct the substitution in such a way that the markers are on the left or
on the right in the image of letters that are dominoes 
in the direction $\be_k$. We make this distinction in the statement of
the following result which was stated in the context of Wang shifts
in \cite{MR3978536} and \cite{MR4226493}.

\begin{theorem}\label{thm:if-markers-desubstitute}
    Let $\Acal$ be an alphabet
    and $X\subset\Acal^{\Z^d}$ be a subshift.
    If there exists a subset
    $M\subset\Acal$ 
    of markers in the direction 
    $\be_i\in\{\be_1,\dots,\be_d\}$,
    then 
\begin{enumerate}%[\rm (i)]
    \item[(i)] (markers on the right) there exists an alphabet $\Bcal_R$,
    a subshift $Y\subset\Bcal_R^{\Z^d}$
    and a $d$-dimensional morphism
    $\omega_R:Y\to X$
    such that 
    \begin{equation*}
        \omega_R(\Bcal_R)\subseteq (\Acal\setminus M)\cup 
        \left((\Acal\setminus M)\odot^i M\right)
    \end{equation*}
    which is recognizable and onto up to a shift and
\item[(ii)] (markers on the left) there exists an alphabet $\Bcal_L$,
    a subshift $Y\subset\Bcal_L^{\Z^d}$
    and a $d$-dimensional morphism
    $\omega_L:Y\to X$
    such that 
    \begin{equation*}
        \omega_L(\Bcal_L)\subseteq (\Acal\setminus M)\cup 
        \left(M\odot^i (\Acal\setminus M)\right)
    \end{equation*}
    which is recognizable and onto up to a shift.
\end{enumerate}
\end{theorem}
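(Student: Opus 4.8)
### Proof plan

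The plan is to construct both morphisms explicitly from the marker structure, then invoke Lemma~\ref{lem:recognizable} for recognizability. I will treat case (i) in detail; case (ii) is obtained by the mirror-image construction (merging each marker to its left neighbour instead of its right).

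\textbf{Construction of the alphabet and morphism.} Fix a subset $M\subset\A$ of markers in the direction $\be_i$. For a configuration $w\in X$, let $P\subset\Z$ be the layer set with $w^{-1}(M)=P\be_i+\sum_{k\neq i}\Z\be_k$ and $1\notin P-P$. The idea of ``markers on the right'' is to glue each marker at layer $p\in P$ to the (unique, non-marker) cell at layer $p+1$ in the same $\be_i$-fiber, producing a new configuration over the alphabet
\[
    \Bcal_R = (\A\setminus M)\ \cup\ \{\,a\odot^i b : a\in\A\setminus M,\ b\in M,\ a\odot^i b\in\Lcal_X\,\}.
\]
First I would make this precise as a map on configurations: given $w\in X$, define $\rho(w)\in\Bcal_R^{\Z^d}$ by contracting the $\be_i$-axis, sending each maximal run of the form (non-marker)(marker) to the corresponding letter of $\Bcal_R$ and each isolated non-marker (one not immediately followed by a marker) to itself. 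Because $1\notin P-P$, the layers of $M$ are non-adjacent, so this partition of each $\be_i$-fiber into blocks of size $1$ or $2$ is well-defined and unambiguous; since $M\odot^k(\A\setminus M)$ and $(\A\setminus M)\odot^k M$ are forbidden for $k\neq i$ (Lemma~\ref{lem:criteria-markers}), the block decomposition is uniform across all $\be_i$-fibers, i.e.\ the cut positions depend only on the $\be_i$-coordinate, so $\rho(w)$ is a genuine configuration on $\Z^d$. Then $Y:=\overline{\{\rho(w):w\in X\}}^\sigma$ is a subshift (it is shift-invariant by construction and closed as the closure of a shift-invariant set), and the inverse operation ``expand each letter of $\Bcal_R$ back to a letter or a domino of $\A$'' defines the $d$-dimensional morphism $\omega_R:\Bcal_R\to\A^{*^d}$ by $a\mapsto a$ for $a\in\A\setminus M$ and $a\odot^i b\mapsto a\odot^i b$ for the length-two letters, extended to $Y\to X$. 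One checks that $\omega_R$ is a $d$-dimensional morphism (it respects concatenations in all directions: in direction $\be_i$ because adjacent blocks abut correctly, in directions $\be_k$, $k\neq i$, because the block structure is the same on neighbouring fibers) and that $\omega_R(\rho(w))=w$ for every $w\in X$, which gives $X\subseteq\overline{\omega_R(Y)}^\sigma$; the reverse inclusion $\overline{\omega_R(Y)}^\sigma\subseteq X$ follows since $\omega_R(\rho(w))=w\in X$ and $X$ is a closed shift-invariant set containing a dense subset of $\omega_R(Y)$ — one must argue that images of the added (closure/shift) points of $Y$ still land in $X$, using that $X$ is closed. Hence $\omega_R$ is onto up to a shift.

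\textbf{Recognizability.} By construction $\omega_R|_{\Bcal_R}$ is injective (distinct letters of $\Bcal_R$ are distinct words of $\A^{*^d}$, the length-one ones being non-markers and the length-two ones ending in a marker), and
\[
    \omega_R(\Bcal_R)\subseteq (\A\setminus M)\cup\bigl((\A\setminus M)\odot^i M\bigr),
\]
so Equation~\eqref{eq:markers-on-right} of Lemma~\ref{lem:recognizable} holds. That lemma yields recognizability of $\omega_R$ in $\Bcal_R^{\Z^d}$, hence in particular in $Y$. This completes (i). For (ii), repeat the construction contracting each maximal run (marker)(non-marker) — again well-defined because $1\notin P-P$ forbids two consecutive markers — obtaining $\Bcal_L=(\A\setminus M)\cup\{b\odot^i a:b\in M,\ a\in\A\setminus M,\ b\odot^i a\in\Lcal_X\}$ and $\omega_L$ satisfying Equation~\eqref{eq:markers-on-left}; Lemma~\ref{lem:recognizable} again gives recognizability.

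\textbf{Main obstacle.} The delicate point is not recognizability — that is handed to us by Lemma~\ref{lem:recognizable} — but verifying that the contraction $\rho$ produces a bona fide element of $\Bcal_R^{\Z^d}$ (rather than something ragged) and that $Y$ is a subshift with $\omega_R$ onto up to a shift. Concretely one must check that the choice of ``cut'' positions along the $\be_i$-axis is globally consistent: the forbidden-domino conditions in the transverse directions force the marker layers, hence the blocks, to be identical across all fibers orthogonal to $\be_i$, so $\rho(w)$ is well-defined; and one must check that taking the shift-closure to form $Y$ does not create configurations whose $\omega_R$-image escapes $X$, which follows from continuity of $\omega_R$ together with closedness of $X$. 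These are the bookkeeping steps where care is needed; everything else is formal.
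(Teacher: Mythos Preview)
Your approach matches the paper's: build $\omega_R$ from non-markers and (non-marker)$\odot^i$(marker) dominoes, construct preimages explicitly (your contraction $\rho$ is exactly the paper's fiber-by-fiber preimage $y$), and invoke Lemma~\ref{lem:recognizable} for recognizability. Two minor points: for ``markers on the right'' the marker at layer $p$ should be glued to the non-marker at layer $p-1$, not $p+1$ (your $\Bcal_R$ is correct, only the prose is reversed), and the paper sidesteps your closure worry by defining $Y=\omega_R^{-1}(X)$, which is automatically closed (continuity of $\omega_R$) and shift-invariant, rather than taking a shift-closure of $\rho(X)$.
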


\begin{proof}
    We do only the proof of (i) when the markers are on the right, since
    one case can be deduced from the other using symmetry.

    Since $X$ is a subshift, there exists a language $F\subset\Acal^{*^d}$
    such that $X$ is the set of configurations of $\Acal^{\Z^d}$ without any
    occurrence of patterns from $F$. Notice that since $M$ is a set of markers
    in the direction $\be_i$, we may assume $M\odot^i M\subset F$.
    Let $P\subset\Acal\odot^i\Acal$ and $Q\subset\Acal$ be the following sets:
    \begin{align*}
        P &= \left((\Acal\setminus M) \odot^i M\right) \setminus F,\\
        Q &= \left\{u\in\Acal\setminus M\,\middle|\, \text{ there exists }
        v\in\Acal\setminus
            M \text{ such that } u\odot^iv \notin F\right\}.
    \end{align*}
    We choose some ordering of their elements with indices starting from zero:
    \begin{align*}
		P&=\{p_0,\dots, p_{|P|-1}\},\\
		Q&=\{q_0,\dots, q_{|Q|-1}\}.
    \end{align*}
    We construct the alphabet $\Bcal=\left\{0,1,\dots,|Q|+|P|-1\right\}$
    and define the rule $\omega$ by
    \begin{equation}\label{eq:omegaStoT}
    \begin{array}{rccl}
    \omega:&\Bcal & \to & \Acal^{*^d}\\
        &j& \mapsto&
    \begin{cases}
        q_j         & \text{ if } 0\leq j <  |Q|,\\
        p_{j-|Q|} & \text{ if }|Q|\leq j < |Q|+|P|.\\
    \end{cases}
    \end{array}
    \end{equation}

    We want to show that $\omega$ extends to a map from a set of configurations to $X$ which
    is onto up to a shift.
    Let $x\in X$ be a configuration which can be seen as a function
    $x:\Z^d\to\Acal$.
    Consider the set $x^{-1}(M)\subset\Z^d$ of positions of markers
    in~$x$. From the definition of markers in the direction $\be_i$, 
    markers appear in nonadjacent hyperplanes orthogonal to $\be_i$ in the configuration $x$.
    Formally, there exists
    a set $H\subset\Z$ such that $x^{-1}(M)=\Z^{i-1}\times H\times\Z^{d-i}$ and
    $1\notin H-H$.
    Since $1\notin H-H$, there exists a strictly increasing sequence
    $(a_k)_{k\in\Z}$ such that $\Z\setminus H=\{a_k\mid k\in\Z\}$.
    We assume that $a_0=0$ if $0\in\Z\setminus H$ and $a_0=-1$ if $0\in H$
    which makes the sequence $(a_k)_{k\in\Z}$ uniquely defined.

    In order to define the preimage of $x$ under $\omega$, it is convenient to
    represent $x$ fiber by fiber. For every $\Bm\in\Z^{d-1}$, let
    $x_\Bm:\Z\to\Acal$, be the sequence such that 
    \[
        x_{(n_1,\dots,n_{i-1},n_{i+1},\dots,n_d)}(n_i)
        =
        x(n_1,\dots,n_{i-1},n_i,n_{i+1},\dots,n_d)
    \]
    for every $\bn=(n_1,\dots,n_{i-1},n_i,n_{i+1},\dots,n_d)\in\Z^d$.
    For every $\Bm\in\Z^{d-1}$, let $y_\Bm:\Z\to\Bcal$ be defined as
    \begin{equation*}
    \begin{array}{rccl}
        y_\Bm:&\Z & \to & \Bcal\\
        &k & \mapsto &
    \begin{cases}
        j & \text{ if }a_k+1\in H \text{ and } x_\Bm(a_k)=q_j,\\
        j & \text{ if }a_k+1\notin H    \text{ and } x_\Bm(a_k)\odot^i x_\Bm(a_k+1)=p_{j-|Q|}, 
    \end{cases}
    \end{array}
    \end{equation*}
    The function $y_\Bm$ is well-defined.
    Indeed, let $k\in\Acal$ and $\Bm\in\Z^{d-1}$.
    If $a_k+1\in H$, then 
    $x_\Bm(a_k+1)\in\Acal\setminus M$
    and
    $y_\Bm(k)= x_\Bm(a_k)\in Q$.
    Also if $a_k+1\notin H$, then 
    $x_\Bm(a_k)\in\Acal\setminus M$
    and 
    $x_\Bm(a_k+1)\in M$.
    Since $x\in X$, then $x_\Bm(a_k)\odot^i x_\Bm(a_k+1)\notin F$
    and therefore $x_\Bm(a_k)\odot^i x_\Bm(a_k+1)\in P$.
    We define the configuration $y:\Z^d\to\Bcal$ by its fibers constructed above
    \[
        y(n_1,\dots,n_{i-1},n_i,n_{i+1},\dots,n_d)
        =
        y_{(n_1,\dots,n_{i-1},n_{i+1},\dots,n_d)}(n_i)
    \]
    for every $\bn=(n_1,\dots,n_{i-1},n_i,n_{i+1},\dots,n_d)\in\Z^d$.

    We may now finish the proof of surjectivity.
If $0\in H$, 
then the configuration $x$ is exactly the image under $\omega$ of the configuration $y\in\Bcal^{\Z^d}$ that
we constructed: $x=\omega(y)$.
If $0\notin H$, then 
the configuration $x$ is a shift of the image under $\omega$ of the
configuration $y\in\Bcal^{\Z^d}$: $x=\sigma^{\be_i}\omega(y)$.
    Let
    \[
        Y = \omega^{-1}(X) = \{y\in\Bcal^{\Z^d} \mid \omega(y)\in X\}
    \]
    making $\omega:Y\to X$ a continuous map which is onto up to a shift.
    Notice that the subset $Y$ is closed since $\omega$ is continuous and $X$ is closed.
    Moreover, $Y$ is shift-invariant since 
    for all $y\in Y$ and $\bn\in\Z^d$
    there exists $\bk\in\Z^d$ such that $\omega(\sigma^\bn(y))=\sigma^\bk(\omega(y))$
    meaning that $\sigma^\bn(y)\in Y$.
    Thus $Y$ is a subshift.

The function $\omega$ is of the form
\begin{equation*}
    \omega(\Bcal)
    \subseteq (\Acal\setminus M)
    \cup \left((\Acal\setminus M)\odot^i M\right)
\end{equation*}
and its restriction on $\Bcal$ is injective by construction.
    Therefore, we conclude from Lemma~\ref{lem:recognizable} that
    $\omega$ is recognizable in $\Bcal^{\Z^d}$.
\end{proof}

%\begin{remark}
    Remark that if $X$ is an effective subshift
    one may also show that $Y$ is an effective subshift.
    Moreover if $X$ is a Wang shift, then $Y$ is a Wang shift
    and the Wang tiles defining $Y$ can be obtained from the Wang tiles
    defining $X$ together with some fusion operation. This is what was done in
    \cite{MR3978536} and \cite{MR4226493}.
%\end{remark}

% Since $X$ is an effective subshift, there exists a Turing machine
% that outputs the set of forbidden patterns defining $X$.
% For every $r\in\N$, let $F_r$ be the set of forbidden patterns computed
% after running the Turing machine $r$ steps from the start.
% Notice that since $M$ is a set of markers, there exists $r\in\N$ such that
% $M\odot^i M\subset F_r$.

\begin{exo}[label={exo:marker-e2-in-XPhi}]
    Using the value of $H$ and $V$ from Exercise~\ref{exo:HV-dominoes}, prove that
    $\{0,1,2,3,4,5,6\}$ is a subset of markers for the direction $\be_2$ in
    the subshift $\Xcal_\Phi$.
\end{exo}

\begin{exo}[label={exo:marker-e1-in-XPhi}]
    Using the value of $H$ and $V$ from Exercise~\ref{exo:HV-dominoes}, prove that
    $\left\{0, 1, 7, 8, 9, 10\right\}$, 
    is a subset of markers for the direction $\be_1$ in the subshift
    $\Xcal_\Phi$.
\end{exo}

\begin{exo}[label={exo:markers-find-a-subshift}]
    Using $M=\{0,1,2,3,4,5,6\}$ as subset of markers for the direction $\be_2$,
    find an alphabet $\Bcal$, a subshift $Y\subset\Bcal^{\Z^2}$ and 
    a 2-dimensional morphism $\alpha:\Bcal\to\Zrange{15}^{*^2}$ 
    such that
    \begin{equation*}
        \alpha(\Bcal)
        \subseteq (\Zrange{15}\setminus M)
        \cup \left((\Zrange{15}\setminus M)\odot^2 M\right)
    \end{equation*}
    which extends to a recognizable continuous map $\alpha:Y\to\Xcal_\Phi$ which is
    onto up to a shift.
\end{exo}

\begin{exo}[label={exo:desubstitute-markers-e1}]
    Using the subset $M=\{0, 1, 7, 8, 9, 10\}\subset\Acal$ 
    of markers for the direction $\be_1$,
    find an alphabet $\Ccal$, a subshift $Y'\subset\Ccal^{\Z^2}$ and 
    a 2-dimensional morphism $\xi:\Ccal\to\Zrange{15}^{*^2}$ 
    such that
    \begin{equation*}
        \xi(\Ccal)
        \subseteq (\Zrange{15}\setminus M)
        \cup \left((\Zrange{15}\setminus M)\odot^1 M\right)
    \end{equation*}
    which extends to a recognizable continuous map $\xi:Y'\to\Xcal_\Phi$ which is
    onto up to a shift.
\end{exo}

\section{A self-similar Wang shift}\label{chap:Labbe:sec:wang-shifts}

A \defn{Wang tile} 
is a tuple of four colors $(a,b,c,d)\in I\times J\times
I\times J$
where $I$
is a finite set of vertical colors
and $J$
is a finite set of horizontal colors, see
\cite{wang_proving_1961,MR0297572}.
A Wang tile is represented as a unit square with colored edges:
\begin{center}
\tile{$a$}{$b$}{$c$}{$d$}
\end{center}
For each Wang tile $\tau=(a,b,c,d)$, let
$\scright(\tau)=a$,
$\sctop(\tau)=b$,
$\scleft(\tau)=c$,
$\scbottom(\tau)=d$
denote respectively the colors of the right, top, left and bottom edges of $\tau$.

\begin{figure}[h]
\begin{center}
    \sageplot[][pdf]{Z.tikz(ncolumns=8)}
\end{center}
    \caption{The set $\Zcal=\{u_0,\dots,u_{15}\}$ of 16 Wang tiles 
    is a simplification made by Jana Lepšová \cite{lepsova_thesis_2024}
    of the set $\Ucal$ of 19 Wang tiles
    introduced in \cite{MR3978536}.  
    Each tile is identified uniquely by an index from the
    set $\{0,1,\dots,15\}$ written at the center each tile.}
    \label{fig:Z}
\end{figure}

Let $\Tcal=\{t_0,\dots,t_{m-1}\}$ be a set of Wang tiles as the one shown in Figure~\ref{fig:Z}.
A configuration $x:\Z^2\to\{0,\dots,m-1\}$ is \defn{valid} with respect to $\Tcal$ if
it assigns a tile in $\Tcal$ to each position of $\Z^2$ so that contiguous edges
of adjacent tiles have the same color, that is,
\begin{align}
    \scright(t_{x(\bn)})&=\scleft(t_{x(\bn+\be_1)})\label{eq:validwangtiling1}\\
    \sctop(t_{x(\bn)})&=\scbottom(t_{x(\bn+\be_2)})\label{eq:validwangtiling2}
\end{align}
for every $\bn\in\Z^2$ where $\be_1=(1,0)$ and $\be_2=(0,1)$.
A finite pattern which is valid with respect to $\Zcal$ is shown in Figure~\ref{fig:Z-5x5-tiling}.

\begin{sagesilent}
from slabbe import WangTiling
tilingZ5x5 = WangTiling(Phi([[12]], 3), Z.tiles())
\end{sagesilent}

\begin{figure}[h]
\begin{center}
\begin{tikzpicture}
    \node (A) at (0,0) {$\arraycolsep=1.8pt\sage{matrix(tilingZ5x5)}$};
    \node (B) at (5,0) {\sageplot[][pdf]{tilingZ5x5.tikz()}};
    \draw[|->] (A) to (B);
\end{tikzpicture}
\end{center}
    \caption{The finite $5\times 5$ pattern on the left is valid with respect
    to $\Zcal$ since it respects Equations~\eqref{eq:validwangtiling1}
    and~\eqref{eq:validwangtiling2} which we can verify on the tiling shown on
    the right.}
    \label{fig:Z-5x5-tiling}
\end{figure}

Let $\Omega_\Tcal\subset\{0,\dots,m-1\}^{\Z^2}$ denote the set of all valid 
configurations with respect to $\Tcal$,
called the \defn{Wang shift} of $\Tcal$. 
To a configuration $x\in\Omega_\Tcal$ corresponds a tiling of the plane $\R^2$ by
the tiles $\Tcal$ where the unit square Wang tile $t_{x(\bn)}$ is placed at position $\bn$ for every
$\bn\in\Z^2$, as in Figure~\ref{fig:Z-5x5-tiling}.
Together with the shift action $\sigma$ of $\Z^2$,
$\Omega_\Tcal$ is a SFT of the form \eqref{eq:SFT}
since there exists a finite set of
forbidden patterns made of all horizontal and vertical dominoes of two tiles
that do not share an edge of the same color.
This definition of Wang shifts allows to use the concepts of languages,
$2$-dimensional morphisms, recognizability introduced in the previous sections.

%As explained in the first page of \cite{MR0297572} 
%(see also \cite[Prop. 5.9]{MR3136260}),
%if $\Tcal$ is periodic, then there is a tiling $x$ by $\Tcal$ with two linearly
%independent translation vectors (in particular a tiling $x$ with vertical
%and horizontal translation vectors).

A configuration $x\in\Omega_\Tcal$ is \defn{periodic} if there exists
$\bn\in\Z^2\setminus\{0\}$ such that $x=\sigma^\bn(x)$.
A set of Wang tiles $\Tcal$ is \defn{periodic} if there exists a periodic configuration
$x\in\Omega_\Tcal$. 
Originally, Wang thought that every set of Wang tiles $\Tcal$ is periodic 
as soon as $\Omega_\Tcal$ is nonempty \cite{wang_proving_1961}.
This statement is equivalent to the existence of an algorithm 
solving the \emph{domino problem}, that is, taking as input a set of Wang tiles
and returning \textit{yes} or \textit{no} whether there exists a valid
configuration with these tiles. 
Berger, a student of Wang, later proved that the domino problem is undecidable
and he also provided a first example of an aperiodic set of Wang tiles
\cite{MR0216954}.
A set of Wang tiles $\Tcal$ is \defn{aperiodic} if
the Wang shift $\Omega_\Tcal$ is a nonempty aperiodic subshift.
This means that in general one cannot decide the emptiness of a Wang shift
$\Omega_\Tcal$. This illustrates that the behavior of $d$-dimensional SFTs when $d\geq2$
is much different than the one-dimensional case where emptiness of a SFT is
decidable \cite{MR1369092}.
Note that another important difference between $d=1$ and $d\geq2$
is expressed in terms of the possible values of entropy of $d$-dimensional SFTs,
see \cite{MR2680402}.

The goal of this section is to prove Theorem~\ref{thm:main-theoremA}, i.e.,
that the Wang shift $\Omega_\Zcal \subset \{0,\dots,15\}^{\Z^2}$
defined by the set of Wang tiles $\Zcal$ shown in Figure~\ref{fig:Z} is self-similar
where the self-similarity is given by 
the $2$-dimensional morphism $\Phi$ defined in 
%Section~\ref{chap:Labbe:subsec:self-similar-subshifts}.
Equation~\eqref{eq:definition-of-Phi}.

\begin{exo}[label={exo:valid-7x7}]
    Find a valid $7\times 7$ tiling with the set $\Zcal$ of Wang tiles.
\end{exo}

\subsection{Markers in the context of Wang tilings}

A tiling with the tiles from the set $\Zcal$ is shown in
Figure~\ref{fig:Z-tiling-5x10-markers}.
It illustrates that there exists a subset $M\subset\Zcal$ of tiles
such that each horizontal row of tiles in the tiling
is using either tiles from $M$ or from $\Zcal\setminus M$.
Moreover, the horizontal lines using tiles from $M$ are nonadjacent.
If these conditions are satisfied for all configurations in $\Omega_\Zcal$,
then $M$ is a subset of markers in the direction $\be_2$.

%Creation d'un pattern 5 x 10 avec tuiles Z (avec coloration des marqueurs)
\begin{sagesilent}
tableZ5x10 = Phi([[12]], 5)
tableZ5x10 = [col[:10] for col in tableZ5x10[3:8]]
tilingZ5x10 = WangTiling(tableZ5x10, Z.tiles())
M = [0, 1, 2, 3, 4, 5, 6]
M_non = list(range(7,16))
position1 = tilingZ5x10.tile_positions(M)
position2 = tilingZ5x10.tile_positions(M_non)
extra_before = []
for (x,y) in position1:
    extra_before.append(r'\fill[yellow!40] ({},{}) '
                        'rectangle ({},{});'.format(x,y,x+1,y+1))
for (x,y) in position2:
    extra_before.append(r'\fill[green!40] ({},{}) '
                        'rectangle ({},{});'.format(x,y,x+1,y+1))
extra_before = '\n'.join(extra_before)
tilingZ5x10tikz = tilingZ5x10.tikz(extra_before=extra_before)
\end{sagesilent}

\begin{figure}[h]
\begin{center}
    \sageplot[][pdf]{tilingZ5x10tikz}
\end{center}
    \caption{
        A $5\times 10$ tiling with tiles from the set $\Zcal$. The tiles
        labeled from 0 to 6 (shown with yellow background) are marker tiles for
        the direction $\be_2$ in the Wang shift $\Omega_\Zcal$ since they
        always appear on nonadjacent rows.}
    \label{fig:Z-tiling-5x10-markers}
\end{figure}

In this section we propose an algorithm to find and prove that a subset of
tiles is a subset of markers in a Wang shift.
We use Lemma~\ref{lem:criteria-markers} which provides a way to prove that a
subset of Wang tiles is a subset of markers and searching for them. 
To use it in the context of Wang shifts and more generally in the context of
SFTs, we need the following definition.

\begin{definition}[\bf surrounding of radius $r$]
    Let $X=\SFT(\Fcal)\subset\Acal^{\Z^2}$ be a shift of finite type for some
    finite set $\Fcal$ of forbidden patterns. A $2$-dimensional word
$u\in\Acal^\bn$,
with $\bn=(n_1,n_2)\in\N^2$,
admits a \defn{surrounding of radius} $r\in\N$
if there exists $w\in\Acal^{\bn+2(r,r)}$ 
such that 
    $u$ occurs in $w$ at position $(r,r)$
    and
$w$ contains no occurrences of forbidden patterns from $\Fcal$.
\end{definition}

If a word admits a surrounding of radius $r\in\N$, it does not mean
it is in the language of the SFT. But if it admits no surrounding of radius $r$
for some $r\in\N$, then for sure it is not in the language of the SFT.
We state the following lemma in the context of Wang tiles.

\begin{lemma}\label{lem:surrounding}
    Let $\Tcal$ be a set of Wang tiles and $u\in\Tcal^\bn$ be a rectangular pattern
    seen as a $2$-dimensional word with $\bn=(n_1,n_2)\in\N^2$. If
    $u$ is allowed in $\Omega_\Tcal$, then for every $r\in\N$ the word $u$ has a
    $\Tcal$-surrounding of radius $r$.
\end{lemma}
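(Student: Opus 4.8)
The plan is to reduce the claim to an immediate unwinding of the definitions of ``allowed'' and of the SFT structure of a Wang shift. First I would recall that, since $u$ is allowed in $\Omega_\T$, by definition $u\in\Lcal_{\Omega_\T}=\bigcup_{x\in\Omega_\T}\Lcal(x)$, so there exist a valid configuration $x\in\Omega_\T$ and a position $\bp\in\Z^2$ such that $u$ occurs in $x$ at $\bp$; explicitly, $x(\ba+\bp)=u(\ba)$ for every $\ba$ with $\zero\le\ba<\bn$. I would then invoke the fact, recorded when $\Omega_\T$ was presented as a subshift of finite type, that the associated finite set $\Fcal$ of forbidden patterns may be taken to consist of the horizontal and vertical dominoes of two tiles that violate Equation~\eqref{eq:validwangtiling1} or~\eqref{eq:validwangtiling2}; since $x$ is valid with respect to $\T$, the configuration $x$ contains no occurrence of any pattern from $\Fcal$.

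The construction of the surrounding is then direct. Fix $r\in\N$ and define $w\in\T^{\bn+2(r,r)}$ by $w(\ba)=x(\ba+\bp-(r,r))$ for every $\ba$ in the domain $\llbracket 0,n_1+2r-1\rrbracket\times\llbracket 0,n_2+2r-1\rrbracket$. I would check that $\shape(w)-\bn-(r,r)=(r,r)\in\N^2$ and $w(\ba+(r,r))=x(\ba+\bp)=u(\ba)$ for $\zero\le\ba<\bn$, so that $u$ occurs in $w$ at position $(r,r)$. Finally, every pattern occurring in $w$ also occurs in $x$ (translated by $\bp-(r,r)$), and $x$ has no occurrence of a pattern from $\Fcal$; hence $w$ contains no occurrence of a forbidden pattern, and $w$ witnesses that $u$ admits a $\T$-surrounding of radius $r$.

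I do not expect a genuine obstacle here: the statement is essentially the observation that every finite window of a valid configuration is locally valid at all radii. The only point requiring a little care is to make explicit that the SFT presentation of a Wang shift uses precisely the mismatching-domino patterns as forbidden patterns, so that ``valid with respect to $\T$'' and ``avoiding $\Fcal$'' coincide --- a fact already noted in the text when $\Omega_\T$ was defined.
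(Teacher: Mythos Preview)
Your proof is correct and follows the natural approach: extract a larger window from a valid configuration containing $u$. The paper does not provide a proof of this lemma, treating it as immediate from the definitions, so there is nothing further to compare.
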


Equivalently the lemma says that
if there exists $r\in\N$ such that $u$ has no $\Tcal$-surrounding of radius~$r$,
then~$u$ is forbidden in $\Omega_\Tcal$ and this is how we use
Lemma~\ref{lem:surrounding} to find markers.
We propose Algorithm~\ref{alg:find-markers} to compute markers from a Wang
tile set and a chosen surrounding radius so that the computation terminates.
If the algorithm finds nothing, then maybe there are no markers or maybe 
one should try again after increasing the surrounding radius.
We prove in the next lemma that if the output is nonempty, it contains a subset of markers.

\begin{algorithm}[h]
    \caption{Find markers.
        If no markers are found, one should try increasing the radius $r$.}
    \label{alg:find-markers}
  \begin{algorithmic}[1]
    \Require $\Tcal$ is a set of Wang tiles;
             $i\in\{1,2\}$ is a direction $\be_i$;
             $r\in\N$ is some radius.
      \Function{FindMarkers}{$\Tcal$, $i$, $r$}
        \State $j\gets 3-i$
        \State $D_j \gets \left\{(u,v)\in\Tcal^2\mid \text{ domino } u\odot^jv \text{
             admits a $\Tcal$-surrounding of radius $r$}\right\}$
        \State $U \gets \{\{u\}\mid u\in\Tcal\}$
            \Comment Suggestion: use a union-find data structure 
        \ForAll{$(u,v) \in D_j$}
            \State Merge the sets containing $u$ and $v$ in the partition $U$.
        \EndFor
        \State $D_i \gets \left\{(u,v)\in\Tcal^2\mid \text{ domino } u\odot^iv \text{
             admits a $\Tcal$-surrounding of radius $r$}\right\}$
         \State\Return $\{\text{set } M \text{ in the partition } U \mid
                               \left(M\times M\right) \cap D_i=\varnothing\}$
      \EndFunction
      \Ensure The output contains zero, one or more subsets of markers 
              in the direction $\be_i$.
  \end{algorithmic}
\end{algorithm}

\begin{lemma}\label{lem:algo1-works}
    If there exists $r\in\N$ and $i\in\{1,2\}$ such that the output of
      $\Call{FindMarkers}{\Tcal, i, r}$
      contains a set $M$, then $M\subset\Tcal$ is a subset of markers in the
      direction $\be_i$.
\end{lemma}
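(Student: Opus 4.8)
The plan is to show that the set $M$ returned by $\Call{FindMarkers}{\T, i, r}$ satisfies the hypotheses of the criterion in Lemma~\ref{lem:criteria-markers}, namely that the patterns $M\odot^i M$, $M\odot^j(\T\setminus M)$, and $(\T\setminus M)\odot^j M$ are all forbidden in $\Omega_\T$, where $j=3-i$. Once this is established, Lemma~\ref{lem:criteria-markers} immediately gives that $M$ is a subset of markers in the direction $\be_i$. Throughout, the key link between the algorithm and the subshift is Lemma~\ref{lem:surrounding}: a domino that admits \emph{no} $\T$-surrounding of radius $r$ is forbidden in $\Omega_\T$; contrapositively, every domino allowed in $\Omega_\T$ belongs to the set $D_i$ (resp.\ $D_j$) computed at radius $r$.

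First I would unwind what it means for $M$ to be in the output. The partition $U$ starts as the discrete partition of $\T$ and is coarsened by merging $u$ and $v$ whenever $(u,v)\in D_j$, i.e.\ whenever the domino $u\odot^j v$ admits a $\T$-surrounding of radius $r$. So $M$ is a block of the resulting partition, and the output condition is $(M\times M)\cap D_i=\varnothing$. Now I would verify the three forbiddenness conditions. For $M\odot^i M$: if some domino $u\odot^i v$ with $u,v\in M$ were allowed in $\Omega_\T$, then by Lemma~\ref{lem:surrounding} it would admit a $\T$-surrounding of radius $r$, so $(u,v)\in D_i$, contradicting $(M\times M)\cap D_i=\varnothing$; hence $M\odot^i M$ is forbidden.

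Next, for $M\odot^j(\T\setminus M)$ and $(\T\setminus M)\odot^j M$: suppose a domino $u\odot^j v$ with $u\in M$, $v\in\T\setminus M$ (or the reverse) were allowed in $\Omega_\T$. Again by Lemma~\ref{lem:surrounding} it admits a $\T$-surrounding of radius $r$, so $(u,v)\in D_j$ (resp.\ $(v,u)\in D_j$). But then the algorithm would have merged the blocks containing $u$ and $v$, forcing $u$ and $v$ into the same block of the final partition $U$; in particular $v\in M$, contradicting $v\in\T\setminus M$. Hence both $M\odot^j(\T\setminus M)$ and $(\T\setminus M)\odot^j M$ are forbidden in $\Omega_\T$. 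With all three conditions verified, Lemma~\ref{lem:criteria-markers} applies and $M$ is a subset of markers in the direction $\be_i$.

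I do not expect a serious obstacle here; the only point requiring a little care is the bookkeeping between the two directions — making sure that the merging in $U$ uses dominoes in the direction $\be_j$ orthogonal to the marker direction $\be_i$, matching exactly the indices $k\neq i$ appearing in Lemma~\ref{lem:criteria-markers}, while the disjointness test uses dominoes in the direction $\be_i$ itself. As a minor remark, one should note that $M\neq\T$ automatically: if $M=\T$ were output, then $(\T\times\T)\cap D_i=\varnothing$, which together with Lemma~\ref{lem:surrounding} would force $\Omega_\T$ to contain no horizontal (if $i=1$) or vertical (if $i=2$) domino at all, hence $\Omega_\T=\varnothing$; in the nonempty case this cannot happen, consistent with the remark that a set of markers is always a proper subset of $\T$.
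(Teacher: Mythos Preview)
Your proposal is correct and follows essentially the same approach as the paper: verify the three forbiddenness conditions of Lemma~\ref{lem:criteria-markers} using Lemma~\ref{lem:surrounding} to pass from ``no $\T$-surrounding of radius $r$'' to ``forbidden in $\Omega_\T$''. The paper's proof is terser (it fixes $i=2$ and simply cites the relevant line numbers of the algorithm), while you spell out the contrapositive reasoning and add the side remark about $M\neq\T$; but the strategy is identical.
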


\begin{proof}
    Suppose that $i=2$, the case $i=1$ being similar.
    The output set $M$ is nonempty since it was created from the union of
    nonempty sets (see lines 4-6 in Algorithm~\ref{alg:find-markers}).
    Using Lemma~\ref{lem:surrounding},
    lines 3 to 6 imply that
        $M\odot^1 (\Tcal\setminus M)$ and
        $(\Tcal\setminus M) \odot^1 M$
    are forbidden in $\Omega_\Tcal$.
    The lines 7 and 8 imply that
        $M\odot^2 M$ is forbidden in $\Omega_\Tcal$.
    Then we deduce from Lemma~\ref{lem:criteria-markers} that
    $M\subset\Tcal$ is a subset of markers in the direction~$\be_2$.
\end{proof}

We believe that if a set of Wang tiles $\Tcal$ has a subset of markers in the
direction $\be_i$ then there exists a surrounding radius $r\in\N$ such that
$\Call{FindMarkers}{\Tcal, i, r}$ outputs this set of markers, so that
Lemma~\ref{lem:algo1-works} is in fact a \emph{if and only if} but we do not
provide a proof of that here.
The fact that there is no upper bound for the surrounding radius is related to
the undecidability of the domino problem.
In practice, in the study of Jeandel--Rao tilings done in
\cite{MR4226493}, a surrounding radius of 1, 2 or 3 was
enough.

\begin{exo}[label={exo:HVdominoes-in-OmegaU}]
    Using the set $\Zcal$ of Wang tiles defined in Figure~\ref{fig:Z},
    compute the sets of horizontal and vertical dominoes that
    admit a $\Zcal$-surrounding of radius 2:
    \begin{align*}
        D_1 &= \left\{(u,v)\in\Zcal^2\mid u\odot^1v \text{
             admits a $\Zcal$-surrounding of radius $2$}\right\},\\
        D_2 &= \left\{(u,v)\in\Zcal^2\mid u\odot^2v \text{
             admits a $\Zcal$-surrounding of radius $2$}\right\}.
    \end{align*}
    Compare them with the set $H$ and $V$ found in
    Exercise~\ref{exo:HV-dominoes}.
\end{exo}

\begin{sagesilent}
L = Z.tiles()
M = [0, 1, 2, 3, 4, 5, 6]
Markers_for_Zcal = WangTileSet([L[m] for m in M])
\end{sagesilent}

\begin{exo}[label={exo:markers-for-Zcal}]
    Use the function \textsc{FindMarkers} defined in
    Algorithm~\ref{alg:find-markers}
    with a surrounding radius $2$ to
    show that the subset
\[M=\left\{\raisebox{-4.5mm}{
    \sageplot[][pdf]{Markers_for_Zcal.tikz(ncolumns=8)}
}\right\}\]
of $\Zcal$ is a subset of markers for the direction $\be_2$ in $\Omega_\Zcal$.
\end{exo}

\subsection{Fusion of Wang tiles}

Recall that a \defn{magma} is a set $\Ical$
equipped with a binary operation $\bullet$
such that
for all $a,b\in\Ical$, the result of the operation $a\bullet b$ is also in $\Ical$.
If the operation $\bullet$ is associative and has an identity, then $\Ical$ is a monoid
and the operation $\bullet$ can be omitted and represented as concatenation.
But, 
in the general context of fusion of Wang tiles defined below where $\Ical$ is
the set of horizontal or vertical colors, we cannot assume that the operation
$\bullet$ is associative. 

The fusion operation on Wang tiles
is defined on pair of tiles
sharing an edge in a tiling according to 
Equations~\ref{eq:validwangtiling1} and~\ref{eq:validwangtiling2}.
Let $(\Ical,\bullet)$ and
$(\Jcal,\bullet)$ be two magmas and
let $\{A,C,Y,W\}\subset\Ical$ be some vertical colors and 
$\{B,D,X,Z\}\subset\Jcal$ be some horizontal colors.
We define two binary operations $\boxbar$ and $\boxminus$ on Wang tiles as
\begin{center}
    \raisebox{-4mm}{\includegraphics{figures/boxbar.pdf}}
    if $A=Y$
\end{center}
and
\begin{center}
    \raisebox{-4mm}{\includegraphics{figures/boxminus.pdf}}
    if $B=Z$.
\end{center}
If $A\neq Y$, the operation $\boxbar$ is not defined.
Similarly, if $B\neq Z$, the operation $\boxminus$ is not defined.
For the Wang tiles considered in this contribution, the operation $\bullet$ is
associative so we always denote it implicitly by concatenation of colors.

In what follows, we propose algorithms and results that works for both
operations $\boxbar$ and $\boxminus$. 
It is thus desirable to have a common notation to denote both, so we define
\[
    u \boxslash^1 v = u \boxbar v
    \qquad
    \text{ and }
    \qquad
    u \boxslash^2 v = u \boxminus v.
\]
If $u\boxslash^i v$ is defined for some $i\in\{1,2\}$, it means that tiles $u$
and $v$ can appear at position $\bn$ and $\bn+\be_i$ in a tiling for some
$\bn\in\Z^d$. 
For each $i\in\{1,2\}$, one can define a new set of tiles from 
two sets $\Tcal$ and $\Scal$ of Wang tiles as
\begin{equation*}
    \Tcal\boxslash^i\Scal = \{u\boxslash^i v \text{ defined }\mid
                        u\in\Tcal,v\in\Scal\}.
\end{equation*}
% The fusion operation together with taking the dual of a set of tiles satisfy the
% following equations:
% \begin{equation*}
%     (\Tcal\boxslash^1\Scal)^* = \Tcal^*\boxslash^2\Scal^*
%     \qquad
%     \text{ and }
%     \qquad
%     (\Tcal\boxslash^2\Scal)^* = \Tcal^*\boxslash^1\Scal^*.
% \end{equation*}

\begin{exo}[label={exo:fusion-of-tiles}]
    Using the set $D_2$ of dominoes that admits a $\Zcal$-surrounding of
    radius~$2$ computed in Exercise~\ref{exo:HVdominoes-in-OmegaU} and the
    subset $M\subset\Zcal$ of markers for the direction $\be_2$ in
    $\Omega_\Zcal$ computed in Exercise~\ref{exo:markers-for-Zcal}, compute the
    set of fusion tiles:
    \[
        \{u\boxminus v \mid (u,v) \in D_2 \text{ and } v \in M\}.
    \]
    What is the meaning of this set?
\end{exo}

\subsection{Desubstitution of Wang shifts}

In Theorem~\ref{thm:if-markers-desubstitute}, we proved that the presence of markers allows
to desubstitute uniquely the configurations of a subshift on $\Z^d$. In case of Wang
shifts, we show in this section that the preimage is also a Wang shift and we may
construct the preimage set of Wang tiles using the fusion operation defined in
the previous section.
We also propose an algorithm to find the desubstitution of Wang
shifts when there exists a subset of marker tiles.

% Creation (à la main) de la preimage par alpha0 du pattern 5 x 10 (coloré)
\begin{sagesilent}
M = [0, 1, 2, 3, 4, 5, 6]
V,alpha0 = Z.find_substitution(M, i=2, radius=2, side="right", solver="dancing_links")
tableV5x6 = [[11,3,17,12,4,11],
         [8,1,9,8,0,8],
         [14,4,12,13,5,14],
         [10,0,8,7,0,10],
         [16,5,13,15,5,16]]
assert alpha0(tableV5x6) == tableZ5x10
tilingV5x6 = WangTiling(tableV5x6, V.tiles())
M = [0, 1, 2, 3, 4, 5, 6]
M_non = list(range(7,18))
position1 = tilingV5x6.tile_positions(M)
position2 = tilingV5x6.tile_positions(M_non)
extra_before = []
for (x,y) in position1:
    extra_before.append(r'\fill[green!40] ({},{}) '
                        'rectangle ({},{});'.format(x,y,x+1,y+1))
for (x,y) in position2:
    extra_before.append(r'\fill[blue!40] ({},{}) '
                        'rectangle ({},{});'.format(x,y,x+1,y+1))
extra_before = '\n'.join(extra_before)
tilingV5x6tikz = tilingV5x6.tikz(extra_before=extra_before)
\end{sagesilent}

\begin{figure}[h]
\begin{center}
\begin{tikzpicture}
    \node (B) at (0,0) {\sageplot[][pdf]{tilingZ5x10tikz}};
    \node (C) at (6,0) {\sageplot[][pdf]{tilingV5x6tikz}};
    \draw[<-|] (B) to node[above] {$\alpha_0$} (C);
\end{tikzpicture}
\end{center}
    \caption{
        A $5\times 10$ tiling with tiles from the set $\Zcal$ is shown on the left.
            The tiles labeled from 0 to 6 (shown with yellow background) are
            marker tiles for the direction $\be_2$ since they appear on
            nonadjacent rows.
        It can be desubstituted as a $5\times 6$ pattern with tiles from the
        set $\Vcal$ using a substitution $\alpha_0$.
    Each marker tile (yellow background) is glued with its below tile (green background) to form a
    new Wang tile (blue background) using the fusion operation $\boxminus$.
    The remaining tiles are kept the same (green background) but get new indices in the set $\Vcal$.
    This process is uniquely defined since the substitution $\alpha_0$ is recognizable.}
    \label{fig:Z-tiling-desubstitution}
\end{figure}

Before stating the result, let us see how the markers allow to desubstitute tilings. 
In Figure~\ref{fig:Z-tiling-desubstitution}.
we observe that markers $M$ computed in Exercise~\ref{exo:markers-for-Zcal}
appear as nonadjacent rows in the Wang shift $\Omega_\Zcal$. 
Therefore the row above
(and below) a row of markers is made of nonmarker tiles. Let us consider the row below.
The idea is to collapse that row with the row of markers just above. 
Each tile is being collapsed with the above marker tile using the fusion of tiles.
The set of tiles that we obtain through this process is exactly the set computed in
Exercise~\ref{exo:fusion-of-tiles}.

Therefore to build a configuration in $\Omega_\Zcal$, it is sufficient to build a tiling
with another set $\Vcal$ of Wang tiles obtained from the set $\Zcal$ after removing the
markers and adding the tiles obtained from the fusion operation. 
One may also remove the tiles which always appear below of a marker tile.
One may recover some configuration in $\Omega_\Zcal$ by applying a $2$-dimensional morphism
$\alpha_0:\Vcal\to\Zcal^{*^2}$
which replaces the merged tiles by their associated equivalent vertical dominoes
and keeps the remaining tiles invariant, see Figure~\ref{fig:Z-tiling-desubstitution}. 
It turns out
that this decomposition is unique.
The creation of the set $\Vcal$ from $\Zcal$ gives the intuition on the
construction of Algorithm~\ref{alg:find-recognizable-sub-from-markers} which 
follows the same recipe and takes any set of Wang tiles with markers as input.

We now state the result that
if a set of Wang tiles $\Tcal$ has a subset of marker tiles, then
there exists another set $\Scal$ of Wang tiles and a 
nontrivial recognizable $2$-dimensional morphism
$\Omega_\Scal\to\Omega_\Tcal$ that is onto up to a shift.
Thus, every Wang
tiling by $\Tcal$ is up to a shift the image under a nontrivial $2$-dimensional
morphism $\omega$ of a unique Wang tiling in $\Omega_\Scal$.
The $2$-dimensional morphism is essentially $1$-dimensional as we show in the
next theorem.

\begin{theorem}\label{thm:if-markers-desubstitute-wang-version}
    {\rm\cite{MR3978536,MR4226493}}
    Let $\Tcal$ be a set of Wang tiles 
    and let $\Omega_\Tcal$ be its Wang shift.
    If there exists a subset
    $M\subset\Tcal$ 
    of markers in the direction 
    $\be_i$ for $i\in\{1,2\}$,
    then 
\begin{enumerate}%[\rm (i)]
\item there exists
    a set of Wang tiles $\Scal_R$
    and a $2$-dimensional morphism
    $\omega_R:\Omega_{\Scal_R}\to\Omega_\Tcal$
    such that 
    \begin{equation*}
        \omega_R(\Scal_R)\subseteq (\Tcal\setminus M)\cup 
        \left((\Tcal\setminus M)\odot^i M\right)
    \end{equation*}
    which is recognizable and onto up to a shift and
\item
    there exists a set of Wang tiles $\Scal_L$ and a $2$-dimensional morphism
    $\omega_L:\Omega_{\Scal_L}\to\Omega_\Tcal$
    such that 
    \begin{equation*}
        \omega_L(\Scal_L)\subseteq (\Tcal\setminus M)\cup 
        \left(M\odot^i (\Tcal\setminus M)\right)
    \end{equation*}
    which recognizable and onto up to a shift.
\end{enumerate}
    There exists a surrounding radius $r\in\N$ such that $\omega_R$ and
    $\omega_L$ are computed using
    Algorithm~\ref{alg:find-recognizable-sub-from-markers}.
\end{theorem}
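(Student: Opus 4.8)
The plan is to reduce Theorem~\ref{thm:if-markers-desubstitute-wang-version} to the already-proven Theorem~\ref{thm:if-markers-desubstitute} together with the observation that the morphism produced there, when $X=\Omega_\T$ is a Wang shift, takes a Wang shift as its domain. So the first step is to invoke Theorem~\ref{thm:if-markers-desubstitute}(i) applied to $X=\Omega_\T$ with the given subset $M\subset\T$ of markers in the direction $\be_i$: this produces an alphabet $\Bcal_R$, a subshift $Y\subset\Bcal_R^{\Z^d}$, and a recognizable $d$-dimensional morphism $\omega_R:Y\to\Omega_\T$, onto up to a shift, with $\omega_R(\Bcal_R)\subseteq(\T\setminus M)\cup((\T\setminus M)\odot^i M)$. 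It remains to (a) exhibit $Y$ as a Wang shift $\Omega_{\Scal_R}$ for an explicitly constructed tile set $\Scal_R$, and (b) check that $\omega_R$ is nontrivial and has the claimed form, then do the same for markers on the left.

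For step (a), I would construct $\Scal_R$ directly from the combinatorics used in the proof of Theorem~\ref{thm:if-markers-desubstitute}. Recall that there $\Bcal_R=Q\sqcup P$, where $Q\subset\T\setminus M$ collects the non-marker tiles that can sit directly above a non-marker row (i.e.\ with $u\odot^i v\notin F$ for some non-marker $v$, taking $i=2$ for concreteness), and $P$ collects the admissible dominoes $(\T\setminus M)\odot^i M$. The idea is: a letter $q_j\in Q$ should become the very Wang tile $q_j$ itself; a letter $p_{j-|Q|}$ coming from a vertical domino $u\boxminus v$ (with $u\in\T\setminus M$ below, $v\in M$ above) should become the fused Wang tile $u\boxminus v$, whose left/right colors are those of the common column and whose bottom color is $\scbottom(u)$, top color $\sctop(v)$. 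So I would set
\[
    \Scal_R = \{\, u\boxslash^i v \mid (u,v)\in D_i,\ v\in M \,\}
              \ \cup\ \{\, q \mid q\in Q \,\},
\]
matching the sets of Exercise~\ref{exo:fusion-of-tiles}, and then verify that a configuration $y\in\Bcal_R^{\Z^d}$ lies in $Y$ (i.e.\ $\omega_R(y)\in\Omega_\T$) if and only if the corresponding configuration over $\Scal_R$ is a valid Wang tiling: the horizontal adjacency constraints of $\Scal_R$ encode exactly the requirement that consecutive columns of $\omega_R(y)$ glue in $\Omega_\T$ (using here that markers occupy complete layers, so horizontal gluing of a fused tile amounts to horizontal gluing of both its constituents simultaneously), and the vertical adjacency constraints of $\Scal_R$ encode exactly the gluing between a (possibly-fused) row and the next. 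This is essentially bookkeeping on colors; the one genuine point is that associativity of $\bullet$ (noted in the paper for the tiles at hand) is what lets the fused colors be written unambiguously. The case of markers on the left is symmetric, using $\boxslash^i$ with $v\in M$ on the other side, giving $\Scal_L$ and $\omega_L$.

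For step (b), nontriviality is immediate: since $M\neq\varnothing$ and $M\subsetneq\T$, the set $P$ is nonempty as soon as $\Omega_\T\neq\varnothing$ (a marker tile must have a non-marker tile below it), so $\omega_R$ genuinely sends some letter to a domino rather than a letter, hence is not just a relabeling. Recognizability and the ``onto up to a shift'' property are inherited verbatim from Theorem~\ref{thm:if-markers-desubstitute}. Finally, the last sentence of the statement---that $\omega_R$ and $\omega_L$ are computed by Algorithm~\ref{alg:find-recognizable-sub-from-markers} for some surrounding radius $r$---follows because, by Lemma~\ref{lem:surrounding}, for a large enough $r$ the dominoes admitting a $\T$-surrounding of radius $r$ that actually matter (those forming $D_i$, $D_j$ restricted to the relevant tiles) coincide with the truly allowed ones, so the algorithm reconstructs exactly the sets $Q$, $P$, and the fusion tiles above; one then points to Lemma~\ref{lem:algo1-works}-style reasoning to certify the output. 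The main obstacle I anticipate is step (a): being careful that the vertical-adjacency rules of the new tile set $\Scal_R$ correctly account for the ``skipped'' marker row---a tile of $\Scal_R$ of type $u\boxminus v$ represents two physical rows, so its top color is $\sctop(v)$ but the tile placed above it in $\Omega_{\Scal_R}$ may itself be of type $u'$ or $u'\boxminus v'$, and one must check the color that must match is $\scbottom(u')$, i.e.\ the bottom of the non-marker tile, in both cases---together with the symmetric subtlety in the horizontal direction when $i=1$. Getting this gluing dictionary exactly right, and confirming it reproduces $Y=\omega_R^{-1}(\Omega_\T)$ rather than something larger, is where the care is needed; everything else is a direct appeal to the earlier general theorem.
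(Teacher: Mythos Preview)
Your proposal is correct and follows essentially the same approach as the paper: invoke Theorem~\ref{thm:if-markers-desubstitute} for recognizability and onto-up-to-shift, then argue that the preimage subshift $Y$ is itself a Wang shift built from the fusion tiles $u\boxslash^i v$ together with the surviving non-marker tiles, exactly as in Algorithm~\ref{alg:find-recognizable-sub-from-markers}. The paper's own proof is in fact shorter than yours---it defers the verification that $Y=\Omega_{\Scal_R}$ to the cited references \cite{MR3978536,MR4226493} and to the structure of Algorithm~\ref{alg:find-recognizable-sub-from-markers}---whereas you sketch the color-matching bookkeeping explicitly; your identification of the delicate point (that horizontal adjacency of fused tiles must encode simultaneous adjacency of both constituent rows, while vertical adjacency compares $\sctop(v)$ with $\scbottom(u')$ regardless of whether the tile above is fused) is exactly the content that those references supply.
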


%\todo[inline]{do we want to do the proof here (half a page)?}

\begin{proof}
The existence of the recognizable $2$-dimensional morphism
which is onto up to a shift was done in Theorem~\ref{thm:if-markers-desubstitute}.
We only need to prove that the preimage of $\Omega_\Tcal$ is a Wang shift. The proof
    of this fact can be found in \cite{MR3978536} and
    \cite{MR4226493}.
It follows the line of Algorithm~\ref{alg:find-recognizable-sub-from-markers}.
\end{proof}

\begin{algorithm}
    \caption{Find a recognizable desubstitution of $\Omega_\Tcal$ from markers}
    \label{alg:find-recognizable-sub-from-markers}
  \begin{algorithmic}[1]
          \Require $\Tcal$ is a set of Wang tiles;
                   $M\subset\Tcal$ is a subset of markers;
                   $i\in\{1,2\}$ is a direction $e_i$;
                   $r\in\N$ is a surrounding radius;
                   $s\in\{\scleft,\scright\}$ determines
                   whether the image of merged tiles is 
                   in $M\odot^i (\Tcal\setminus M)$ (markers on the left)
                   or in $(\Tcal\setminus M)\odot^i M$ (markers on the right).
      \Function{FindSubstitution}{$\Tcal$, $M$, $i$, $r$, $s$}
        \State $D \gets \left\{(u,v)\in\Tcal^2\mid \text{ domino } u\odot^iv \text{
             admits a $\Tcal$-surrounding of radius $r$}\right\}$
        \If{$s=\scleft$}
            \State $ P \gets \left\{(u,v)\in D
                \mid u\in M \text{ and } v\in\Tcal\setminus M \right\}$
            \State $K \gets \left\{v\in\Tcal\setminus M\mid \text{ there exists }
                            u \in \Tcal\setminus M \text{ such that }
                            (u,v) \in D \right\}$
        \ElsIf{$s=\scright$}
            \State $ P \gets \left\{(u,v)\in D
                \mid u\in\Tcal\setminus M \text{ and } v\in M \right\}$
            \State $K \gets \left\{u\in\Tcal\setminus M\mid \text{ there exists }
                            v \in \Tcal\setminus M \text{ such that }
                            (u,v) \in D \right\}$
        \EndIf
        %\State $L\gets \Tcal\setminus(M\cup K)$
        \State $K\gets\Call{Sort}{K}$, $P\gets\Call{Sort}{P}$
        \Comment lexicographically on the indices of tiles
        \State $\Scal\gets K\cup \{u\boxslash^iv\mid(u,v)\in P\}$
        \Comment defines uniquely indices of tiles in $\Scal$ from $0$ to
        $|\Scal|-1$.
        \State \Return $\Scal$, $\omega:\Omega_\Scal\to\Omega_\Tcal:
        \begin{cases}
        u\boxslash^i v
        \quad \mapsto \quad
        u\odot^i v
        & \text{ if }
        (u, v) \in P\\
        u \quad \mapsto \quad
        u & \text{ if } u \in K.
        \end{cases}$
      \EndFunction
      \Ensure $\Scal$ is a set of Wang tiles;
              $\omega:\Omega_\Scal\to\Omega_\Tcal$ is 
        recognizable and onto up to a shift.
  \end{algorithmic}
\end{algorithm}

In the definition of $\omega$
in Algorithm~\ref{alg:find-recognizable-sub-from-markers},
given two Wang tiles $u$ and $v$ such that
$u\boxslash^i v$ is defined for $i\in\{1,2\}$, the map
\begin{equation*}
u\boxslash^i v
\quad
\mapsto
\quad
u\odot^i v
\end{equation*}
can be seen as a decomposition of Wang tiles:
\begin{center}
    \includegraphics{figures/decomposition-H.pdf}
\qquad
    \raisebox{5mm}{\text{ or }}
\qquad
    \includegraphics{figures/decomposition-V.pdf}
\end{center}
whether $i=1$ or $i=2$ and where $A=Y$ and $B=Z$.
The reader may wonder how the substitution decides the color $A$ (color $B$ if
$i=2$) from its input tiles.  The answer is that
Algorithm~\ref{alg:find-recognizable-sub-from-markers} is performing
a desubstitution. Therefore the two tiles sharing the vertical side with
letter $A$ are known from the start and the algorithm just creates a new tile
$(W,BX,C,DZ)$ and claims that it will always get replaced by the two tiles with
shared edge with color $A$.

\begin{exo}[label={exo:FindSubstitution}]
    Using the function \textsc{FindSubstitution} defined in
Algorithm~\ref{alg:find-recognizable-sub-from-markers}
with the subset $M\subset\Zcal$ of markers for the direction $\be_2$ computed in
    Exercise~\ref{exo:markers-for-Zcal}, 
    construct a set of tiles $\Vcal$ and
    a recognizable $2$-dimensional morphism
$\alpha_0:\Omega_\Vcal\to\Omega_\Zcal$ which is onto up to a shift
    and such that 
    \[
    \alpha_0(u)\in\Zcal\setminus M
    \quad
    \text{ or }
    \quad
    \alpha_0(u)\in(\Zcal\setminus M)\odot^2 M.
    \]
\end{exo}

\subsection{Self-similarity of the Wang shift $\Omega_\Zcal$ defined by 16 tiles}
\label{subsection:self-similarity-wang-shift}

We prove that $\Omega_\Zcal$ is self-similar
by executing the function
$\textsc{FindMarkers}$ 
on the set of Wang tiles $\Zcal$ 
followed by 
$\textsc{FindSubstitution}$
and repeating this process until we obtain a set of Wang tiles which is equivalent to the original one (two steps are needed).
Each time $\textsc{FindMarkers}$ finds at least one subset of markers using a
surrounding radius of size at most 2.
Thus using Theorem~\ref{thm:if-markers-desubstitute-wang-version},
we find a desubstitution of tilings
with $\textsc{FindSubstitution}$.
The proof is done in SageMath \cite{sagemathv10.4}
using \texttt{slabbe} optional package
\cite{labbe_slabbe_0_7_6_2023}.

The following result was first shown in 
\cite{lepsova_thesis_2024} where it was also proved that
$\Omega_\Zcal$ is aperiodic and minimal.
It was deduced by showing that $\Omega_\Zcal$ is topologically
conjugate to $\Omega_\Ucal$.
Self-similarity, aperiodicity and minimality of $\Omega_\Ucal$
was shown in \cite{MR3978536}.
In \cite{MR4226493}, it was shown that the set
$\Omega_\Ucal$ describes the internal self-similar structure hidden in
Jeandel--Rao aperiodic tilings \cite{MR4210631}.

A consequence of Theorem~\ref{thm:main-theoremA} is that the Wang
shift $\Omega_\Zcal$ provides another description for the substitutive subshift
$\Xcal_\Phi$, see Exercise~\ref{exo:OmegaU=XPhi}.

\begin{proof}[Proof of Theorem~\ref{thm:main-theoremA}]
    In this proof, we show there exist sets of Wang tiles 
$\Vcal$ and $\Wcal$
together with their associated Wang shifts 
$\Omega_\Vcal$ and $\Omega_\Wcal$
and there exist two recognizable $2$-dimensional morphisms $\alpha_0$ and $\alpha_1$
and a bijection $\alpha_2$:
    \begin{equation*}
        \Omega_\Zcal \xleftarrow{\alpha_0}
        \Omega_\Vcal \xleftarrow{\alpha_1}
        \Omega_\Wcal \xleftarrow{\alpha_2}
        \Omega_\Zcal
    \end{equation*}
    that are onto up to a shift, i.e.,
    $\shiftclosure{\alpha_0(\Omega_\Vcal)}=\Omega_\Zcal$,
    $\shiftclosure{\alpha_1(\Omega_\Wcal)}=\Omega_\Vcal$ and
    $\alpha_2(\Omega_\Zcal)=\Omega_\Wcal$.

    First we define the set $\Zcal$ of Wang tiles in SageMath:
\begin{sagecommandlinetcb}
\begin{sagecommandline}
sage: from slabbe import WangTileSet
sage: tiles = ["DOJO", "DOHL", "JMDP", "DMDK", "HPJP", "HPHN", "HKDP", "BOIO", 
....:   "ILEO", "ILCL", "ALIO", "EPIP", "IPIK", "IKBM", "IKAK", "CNIP"]
sage: Z = WangTileSet([tuple(tile) for tile in tiles])
\end{sagecommandline}
\end{sagecommandlinetcb}
\[
\Zcal = \left\{\raisebox{-10mm}{
    \sageplot[][pdf]{Z.tikz(ncolumns=8)}
}\right\}
\]

    We desubstitute $\Zcal$ with the set $\{0, 1, 2, 3, 4, 5, 6\}$
    of markers in the direction $\be_2$:
\begin{sagecommandlinetcb}
\begin{sagecommandline}
sage: Z.find_markers(i=2,radius=2,solver="dancing_links")
[[0, 1, 2, 3, 4, 5, 6]]
sage: M = [0, 1, 2, 3, 4, 5, 6]
sage: V,alpha0 = Z.find_substitution(M, i=2, radius=2, side="right",
....:                                solver="dancing_links")
\end{sagecommandline}
\end{sagecommandlinetcb}
    \noindent
We obtain $\alpha_0:\Omega_\Vcal\to\Omega_\Zcal$ given as a rule of the form
\[
\begin{array}{ll}
    \alpha_0:&\Zrange{17}\to\Zrange{15}^{*^2}\\[2mm]
    &\left\{\arraycolsep=1.8pt
                \sage{alpha0._latex_(ncolumns=5, align='l')}
    \right.
\end{array}
\]
and the set $\Vcal$ of $\sage{len(V)}$ Wang tiles
\[
\Vcal = \left\{\raisebox{-15mm}{
    \sageplot[][pdf]{V.tikz(ncolumns=8)}
}\right\}.
\]

The set of tiles $\Vcal$ has three subsets of markers for the direction
    $\be_1$.
We desubstitute $\Vcal$ with the subset of markers
$\{0, 1, 2, 8, 9, 10, 11\}$:
\begin{sagecommandlinetcb}
\begin{sagecommandline}
sage: V.find_markers(i=1,radius=1,solver="dancing_links")
[[0, 1, 2, 7, 8, 9, 10]]
sage: M = [0, 1, 2, 7, 8, 9, 10]
sage: W,alpha1 = V.find_substitution(M, i=1, radius=1, side="right",
....:                                solver="dancing_links")
\end{sagecommandline}
\end{sagecommandlinetcb}
    \noindent
We obtain $\alpha_1:\Omega_\Wcal\to\Omega_\Vcal$ given as a rule of the form
\[
\begin{array}{ll}
    \alpha_1:&\Zrange{15}\to\Zrange{17}^{*^2}\\[2mm]
    &\left\{\arraycolsep=1.8pt
            \sage{alpha1._latex_(ncolumns=4, align='l')}
     \right.
\end{array}
\]
and the set $\Wcal$ of 16 Wang tiles
\[
\Wcal = \left\{\raisebox{-10mm}{
    \sageplot[][pdf]{W.tikz(ncolumns=8)}
}\right\}.
\]

It turns out that $\Zcal$ and $\Wcal$ are \defn{equivalent},
that is, they are the same set of Wang tiles up to 
a bijection of their horizontal and vertical edge labels.
This can be checked in SageMath as follows:
\begin{sagecommandlinetcb}
\begin{sagecommandline}
sage: W.is_equivalent(Z)
True
\end{sagecommandline}
\end{sagecommandlinetcb}
    \noindent
The bijection \texttt{vert} between the vertical colors,
the bijection \texttt{horiz} between the horizontal colors
and bijection $\alpha_2$ from $\Zcal$ to $\Wcal$ is computed as follows:
\begin{sagecommandlinetcb}
\begin{sagecommandline}
sage: _,vert,horiz,alpha2 = Z.is_equivalent(W, certificate=True)
sage: vert
{'A': 'IJ', 'B': 'IH', 'C': 'BD', 'D': 'I', 'E': 'AD', 'H': 'B', 'I': 'ID', 'J': 'A'}
sage: horiz
{'K': 'PO', 'L': 'M', 'M': 'PL', 'N': 'MO', 'O': 'K', 'P': 'KO'}
\end{sagecommandline}
\end{sagecommandlinetcb}
    \noindent
The equivalence of two sets of Wang tiles is decided by computing a graph
isomorphism between the representation of a set of Wang tiles as a graph where
the edges link the left to the right colors of each tile.
The curious reader may discover the algorithm by reading the source code 
of the above method using two question marks in SageMath
(\texttt{Z.is\_equivalent??}).

We obtain the morphism $\alpha_2:\Omega_\Zcal\to\Omega_\Wcal$ given as a rule of the form
\[
\begin{array}{ll}
    \alpha_2:&\Zrange{15}\to\Zrange{15}^{*^2}\\[2mm]
    &\left\{\arraycolsep=1.8pt
                \sage{alpha2._latex_(ncolumns=4, align='l')}
    \right.
\end{array}
\]

We may check that $\alpha_0\circ\alpha_1\circ\alpha_2=\Phi$
where the variable \texttt{Phi} was created in Exercise~\ref{exo:Phi-sagemath}:
\begin{sagecommandlinetcb}
\begin{sagecommandline}
sage: alpha0 * alpha1 * alpha2 == Phi
True
\end{sagecommandline}
\end{sagecommandlinetcb}
\noindent
We conclude that
$\Omega_\Zcal
    =\shiftclosure{\alpha_0(\Omega_\Vcal)}
    =\shiftclosure{\alpha_0\alpha_1(\Omega_\Wcal)}
    =\shiftclosure{\alpha_0\alpha_1\alpha_2(\Omega_\Zcal)}
    =\shiftclosure{\Phi(\Omega_\Zcal)}$.
\end{proof}

In the proof, we used Knuth's dancing links algorithm \cite{knuth_dancing_2000}
    because it is faster at this particular task than the MILP solver Gurobi
    \cite{gurobi} or the SAT solvers Glucose
    \cite{doi:10.1142/S0218213018400018} as we can see below:
% Below, we can't use \begin{sagecommandline} because of "time"
\begin{sageverbatimtcb}
sage: %time Z.find_markers(i=2,radius=2,solver="dancing_links")
CPU times: user 3.34 s, sys: 0 ns, total: 3.34 s
Wall time: 3.34 s
[[0, 1, 2, 3, 4, 5, 6]]
sage: %time Z.find_markers(i=2,radius=2,solver="gurobi")
CPU times: user 12.4 s, sys: 572 ms, total: 13 s
Wall time: 13 s
[[0, 1, 2, 3, 4, 5, 6]]
sage: %time Z.find_markers(i=2,radius=2,solver="glucose")
CPU times: user 50.6 s, sys: 2.53 s, total: 53.1 s
Wall time: 2min 10s
[[0, 1, 2, 3, 4, 5, 6]]
\end{sageverbatimtcb}
    Note that for other tasks like finding a valid tiling of a $n\times n$
    square with Wang tiles, the Glucose SAT
	solver \cite{doi:10.1142/S0218213018400018} based on MiniSAT \cite{minisat}
    is faster \cite{labbe_comparison_2018}
    than Knuth's dancing links algorithm or MILP solvers.
  
\begin{exo}[label={exo:OmegaU=XPhi}]
        Using the criterion given in Lemma~\ref{lem:criterion-for-minimality},
        prove that $\Omega_\Zcal$ is minimal and $\Omega_\Zcal=\Xcal_\Phi$.
\end{exo}

\begin{exo}[label={exo:markers-e1-and-then-e2}]
    Prove the self-similarity of $\Omega_\Zcal$ by using first markers in the
    direction $\be_1$ in the set $\Zcal$ and then using markers in the
    direction $\be_2$.
\end{exo}

\section{A self-similar symbolic dynamical system}\label{chap:Labbe:sec:Z2-rotations}

In this section, we consider the dynamical system $(\torus^2,\Z^2,R_\Zcal)$ defined
on the 2-dimensional torus $\torus^2=(\R/\Z)^2$
by the continuous $\Z^2$-action
\[
\begin{array}{rccl}
    R_\Zcal:&\Z^2\times\torus^2 & \to & \torus^2\\
    &(\bn,\bx) & \mapsto &\bx+\varphi^{-2}\bn
\end{array}
\]
where $\varphi=\frac{1+\sqrt{5}}{2}$.
We define a symbolic representation of that dynamical system
using a well-chosen partition $\Pcal_\Zcal$ of $\torus^2$.
The partition $\Pcal_\Zcal$ is
is a simplification  of the partition $\Pcal_\Ucal$
that was introduced in \cite{MR4213162}
where it was proved to be a Markov partition 
for the dynamical system $(\torus^2,\Z^2,R_\Zcal)$.
As discovered during the PhD thesis of Jana Lepšová, 
it turns out that the vertical line at $x=\varphi^-2$ is not necessary
in the partition $\Pcal_\Ucal$. 
Removing the vertical line at $x=\varphi^{-2}$ in the partition $\Pcal_\Ucal$
reduces the number of atoms in the partition from 19 to 16.
The indices used to define the partitions are consistent with the choices made in
\cite{MR4213162} for $\Ucal$ and \cite{lepsova_thesis_2024} for $\Zcal$.
As illustrated in Figure~\ref{fig:construction-of-partitionZ},
the partition $\Pcal_\Zcal$ can be defined from the following 7 segments in $\R^2$:
\begin{align*}
    &(1,\varphi^2) \to (0,\varphi^2) \to (\varphi,0) \to (\varphi,1),\\
    &(1,1)\to (0,1) \to (1,0) \to (1,1),\\
    &\left(\frac{1}{\varphi^2},2\right)
\to\left(1+\frac{1}{\varphi^2},1\right).
\end{align*}

\begin{figure}[h]
\begin{center}
    \includegraphics{figures/partitionZ-from-lines.pdf}
\end{center}
    \caption{
        The partition $\Pcal_\Zcal$ of $\torus^2$ can be constructed from 7
        segments in $\R^2$ (left) and their images under the group of
        translations $\Z^2$ (right).}
    \label{fig:construction-of-partitionZ}
\end{figure}

\begin{figure}[h]
\begin{center}
    \includegraphics[width=\linewidth]{figures/walking_on_the_partition.pdf}
\end{center}
    \caption{
    %The lattice $\Z^2$ is illustrated with black vertices.
    %A fundamental domain of the flat torus $\R^2/\Z^2$ 
    %is shown as a unit square shape with a black contour 
    %together with
    The polygonal partition $\Pcal_\Zcal$ of $\torus^2$ 
    with indices in the set $\Zrange{15}$.
    The coding of the shifted lattice $\bp+\frac{1}{\varphi^2}\Z^2$
    by the partition
    defines a configuration in $\Zrange{15}^{\Z^2}$. In the figure, the points
    $\bp+\frac{1}{\varphi^2}(m,n)$ are shown in white with $\bp=(0.1357, 0.2938)$ for each
    $m\in\Zrange{5}$ and $n\in\Zrange{7}$ and
    are coded by a $2$-dimensional word of shape $(6,8)$.}
    \label{fig:walk-on-partition}
\end{figure}

The translations of the 7 segments under the group of translation $\Z^2$
splits the torus $\torus^2$ into 16 polygonal regions indexed with integers
from the set $\Zrange{15}$.
The coding by the partition
$\Pcal_\Zcal$ of the orbit of a starting point in $\torus^2$ by the
$\Z^2$-action of $R_\Zcal$ 
defines a configuration $w\in\Zrange{15}^{\Z^2}$,
see Figure~\ref{fig:walk-on-partition}.
The topological closure of the set of all such configurations
is the symbolic dynamical system $\Xcal_{\Pcal_\Zcal,R_\Zcal}$ corresponding
to $\Pcal_\Zcal,R_\Zcal$ (see Lemma~\ref{lem:closure-of-tilings}).
It turns out that $\Xcal_{\Pcal_\Zcal,R_\Zcal}$ is a \defn{subshift} as it is
also closed under the shift $\sigma$ by integer translations.

The goal of the next sections is to prove 
that the symbolic
dynamical system $\Xcal_{\Pcal_\Zcal,R_\Zcal}$ is self-similar
where the self-similarity is given by 
the $2$-dimensional morphism $\Phi$ defined in 
Equation~\eqref{eq:definition-of-Phi}
(Theorem~\ref{thm:main-theoremB}).

\begin{exo}[label={exo:merge-the-atoms}]
    Define the polygonal partition $\Pcal_\Zcal$ of $\torus^2$ 
    using the simplification proposed by Jana Lepšová
    \cite{lepsova_thesis_2024}
    of the partition $\Pcal_\Ucal$ 
    that was introduced in \cite{MR4213162},
    that is, 
    merging the atoms labeled 6 and 7,
    merging the atoms labeled 12 and 13 and
    merging the atoms labeled 14 and 15.
    See Figure~\ref{fig:partition-PU-PZ}.
\end{exo}

\begin{sagesilent}
axis0 = axis_HV()
PUtikz = PU.tikz(extra_code=axis0, fontsize=r'\normalsize', scale=5)
\end{sagesilent}

\begin{figure}[h]
    \[
    \begin{array}{c}
        \Pcal_\Ucal=
    \end{array}
    \begin{array}{c} 
        \sageplot[][pdf]{PUtikz}
    \end{array}
    \quad
    \begin{array}{c}
        \Pcal_\Zcal=
    \end{array}
    \begin{array}{c} 
        \sageplot[][pdf]{PZtikz}
    \end{array}
    \] 
\caption{The partition $\Pcal_\Ucal$
    introduced in \cite{MR4213162}
    and the partition $\Pcal_\Zcal$
    following the simplification suggested in \cite{lepsova_thesis_2024}.}
    \label{fig:partition-PU-PZ}
\end{figure}

\begin{exo}[label={exo:word-shape-68-8-10}]
    Figure~\ref{fig:walk-on-partition} provides the construction of a 
    2-dimensional word of shape $(6,8)$.
    Using the same construction, extend that pattern by one unit in all
    directions to obtain a word of shape $(8,10)$.
\end{exo}

\subsection{Toral $\Z^2$-rotations and polygon exchange transformations (PETs)}

Let $\Gamma$ be a \defn{lattice} in $\R^2$, i.e., a discrete subgroup of the
additive group $\R^2$ with $2$ linearly independent generators.
This defines a $2$-dimensional torus $\generictorus=\R^2/\Gamma$. 
By analogy with the rotation $x\mapsto x+\alpha$ on the circle $\R/\Z$ for
an $\alpha\in\R$, we use the terminology of \defn{rotation} 
(sometimes also called \defn{translation})
to denote the following $\Z^2$-action defined on a 2-dimensional torus.

\begin{definition}\label{def:Z2-rotation}
Let $\generictorus=\R^2/\Gamma$ where $\Gamma$ is a lattice in $\R^2$.
For some $\balpha,\bbeta\in\generictorus$, we consider
the dynamical system $(\generictorus, \Z^2, R)$ where
$R:\Z^2\times\generictorus\to\generictorus$ 
is the continuous $\Z^2$-action on $\generictorus$
defined by
\[
R^\bn(\bx):=R(\bn,\bx)=\bx + n_1\balpha + n_2\bbeta
\]
for every $\bn=(n_1,n_2)\in\Z^2$.
We say that the $\Z^2$-action $R$ is a 
    \defn{toral $\Z^2$-rotation} or a
    \defn{$\Z^2$-rotation on} $\generictorus$
    which defines a dynamical system $(\generictorus,\Z^2,R)$.
\end{definition}

It is practical to represent a toral $\Z^2$-rotation in terms of polygon
exchange transformations \cite{MR3010377,MR3186232}.

\begin{definition}{\rm\cite{alevy_kenyon_yi}}
Let $X$ be a polygon together with
two topological partitions of $X$ into polygons
\[
    X=\bigcup_{k=0}^N P_k
     =\bigcup_{k=0}^N Q_k
\] 
such that for each $k$, $P_k$ and $Q_k$ are translation equivalent, i.e.,
there exists $\bv_k\in\R^2$ such that $P_k=Q_k+\bv_k$.
A \defn{polygon exchange transformation} (PET) is the piecewise translation
on $X$ defined for $\bx\in P_k$ by
$T(\bx) = \bx+\bv_k$.
The map is not defined for points $\bx\in\bigcup_{k=0}^N\partial P_k$.
\end{definition}

\begin{sagesilent}
from slabbe import PolyhedronExchangeTransformation as PET
base = diagonal_matrix((1,1))
translation = vector((1/3, 1/2))
T = PET.toral_translation(base, translation)
s,beta = T.induced_transformation([1,-1,-1])
ticks_at_6 = [a/6 for a in range(7)]
axis01 = axis_HV(horizontal_axis=ticks_at_6,vertical_axis=ticks_at_6)
PETtriangledomaintikz = s.partition().tikz(extra_code=axis01, label_format='$P_{{{}}}$', fontsize=r'\normalsize', scale=5)
PETtriangleimagetikz = s.image_partition().tikz(extra_code=axis01, label_format='$Q_{{{}}}$', fontsize=r'\normalsize', scale=5)
\end{sagesilent}

\begin{figure}[h]
\begin{center}
    \sageplot[][pdf]{PETtriangledomaintikz}
    \qquad
    \sageplot[][pdf]{PETtriangleimagetikz}
\end{center}
\caption{A polygon exchange transformation defined on the triangle with
    vertices $(0,0)$, $(0,1)$ and $(1,0)$.}
    \label{fig:PET-triangle}
\end{figure}

A PETs can be quite complicated.
For example, a polygon exchange transformation is shown in
Figure~\ref{fig:PET-triangle}. In this chapter, we consider pairs of commuting 
PETs that are much simpler given by the exchanges of two rectangles, see 
Figure~\ref{fig:PET-golden-rotation-RZ}.

\begin{figure}[h]
\begin{center}
    \includegraphics{figures/PET-golden-rotation.pdf}
\end{center}
\caption{The $\Z^2$-action $R_\Zcal$ can be seen as a pair of commuting
    polygon exchange transformations on the unit square $[0,1)^2$.}
    \label{fig:PET-golden-rotation-RZ}
\end{figure}

A $\Z^2$-rotation $R$ 
on a torus $\generictorus$
can be decomposed into two commuting maps
as follows:
\[
    R^\bn
    =R^{n_1\be_1+n_2\be_2}
    =R^{n_1\be_1} \circ R^{n_2\be_2}
    =\left(R^{\be_1}\right)^{n_1} \circ 
     \left(R^{\be_2}\right)^{n_2}.
\]
where each map $R^{\be_1}$ and $R^{\be_2}$ can be seen as polygon exchange
transformation 
defined by the exchange of at most $4$ pieces 
on a fundamental domain of~$\generictorus$
having for shape a parallelogram.
We state this as a lemma because we use this connection two times in the proof
of Theorem~\ref{thm:main-theoremB}.

\begin{lemma}\label{lem:PET-iff-toral-rotation}
    Let $\Gamma=\ell_1\Z\times\ell_2\Z$ be a lattice in $\R^2$
    and its rectangular fundamental domain
    $D=[0,\ell_1)\times[0,\ell_2)$.
    For every $\balpha=(\alpha_1,\alpha_2)\in D$,
    the dynamical system $(\R^2/\Gamma, \Z, \bx\mapsto\bx+\balpha)$
    is measurably conjugate to
    the dynamical system $(D, \Z, T)$ where $T:D\to D$
    is the polygon exchange transformation shown
    in Figure~\ref{fig:PET-toral-rotation}.
\end{lemma}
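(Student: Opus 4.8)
The plan is to exhibit the obvious candidate conjugacy — the canonical identification of the torus $\R^2/\Gamma$ with its rectangular fundamental domain $D$ — and then to check that translation by $\balpha$, read through this identification, is exactly the PET $T$. Let $q:\R^2\to\R^2/\Gamma$ be the quotient map. Since $D=[0,\ell_1)\times[0,\ell_2)$ is a fundamental domain for $\Gamma=\ell_1\Z\times\ell_2\Z$, the restriction $q|_D$ is a bijection onto $\R^2/\Gamma$; I would set $\theta=(q|_D)^{-1}:\R^2/\Gamma\to D$. As $q|_D$ is continuous and injective on the Borel set $D$, the inverse $\theta$ is Borel measurable (standard fact on injective Borel maps between standard Borel spaces), and $\theta^{-1}=q|_D$ is continuous, hence measurable. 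Equip $\R^2/\Gamma$ with its Haar probability measure $\mu$ and $D$ with $\theta_\ast\mu$; because $q|_D$ is measure preserving from normalized Lebesgue measure $\frac{1}{\ell_1\ell_2}\lambda$ on $D$ to $\mu$, we have $\theta_\ast\mu=\frac{1}{\ell_1\ell_2}\lambda|_D$, so $\theta$ is measure preserving by construction.

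Next I would identify $T$ with the transported translation. For $[\bx]\in\R^2/\Gamma$ put $\by=\theta([\bx])\in D$. Then $\theta([\bx]+\balpha)$ is the unique representative of $\by+\balpha$ in $D$, namely $\by+\balpha-\bv$ for the unique $\bv\in\Gamma$ with $\by+\balpha-\bv\in D$. Since $\by\in[0,\ell_1)\times[0,\ell_2)$ and $\balpha=(\alpha_1,\alpha_2)\in[0,\ell_1)\times[0,\ell_2)$, we have $\by+\balpha\in[0,2\ell_1)\times[0,2\ell_2)$, so $\bv\in\{(0,0),(\ell_1,0),(0,\ell_2),(\ell_1,\ell_2)\}$ and which one occurs is decided by the signs of $y_1-(\ell_1-\alpha_1)$ and $y_2-(\ell_2-\alpha_2)$. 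Thus $D$ splits into at most four rectangles, cut by the two segments $\{y_1=\ell_1-\alpha_1\}$ and $\{y_2=\ell_2-\alpha_2\}$, on each of which $\by\mapsto\theta([\bx]+\balpha)$ is a translation by one of $(\alpha_1,\alpha_2)$, $(\alpha_1-\ell_1,\alpha_2)$, $(\alpha_1,\alpha_2-\ell_2)$, $(\alpha_1-\ell_1,\alpha_2-\ell_2)$; one checks this piecewise translation is exactly the PET $T$ of Figure~\ref{fig:PET-toral-rotation}. Hence $\theta([\bx]+\balpha)=T(\theta([\bx]))$ whenever $\theta([\bx])$ avoids the discontinuity set $N$ of $T$ (the union of those two segments), which is $\lambda$-null; in particular, on $D\setminus N$ the map $T$ coincides with $\theta\circ(\,\cdot\,+\balpha)\circ\theta^{-1}$, so $T$ preserves $\theta_\ast\mu$.

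Finally I would promote this a.e.\ intertwining to a genuine isomorphism of measure-preserving dynamical systems in the sense of Section~\ref{chap:Labbe:sec:Preliminaries}. Let $X_0=\{[\bx]\in\R^2/\Gamma\mid [\bx]+n\balpha\notin q(N)\text{ for all }n\in\Z\}$. Each set $\{[\bx]\mid [\bx]+n\balpha\in q(N)\}$ is a translate of the null set $q(N)$, hence null, so $X_0$ has full $\mu$-measure, and clearly $X_0+\balpha=X_0$. For $[\bx]\in X_0$ we have $\theta([\bx])\notin N$, so $\theta([\bx]+\balpha)=T(\theta([\bx]))$; consequently $D_0:=\theta(X_0)$ satisfies $T(D_0)=\theta(X_0+\balpha)=D_0$, and $D_0$ has full measure. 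Restricted to $X_0$ and $D_0$, the map $\theta$ is a measurable, measure-preserving bijection with $\theta\circ(\,\cdot\,+\balpha)=T\circ\theta$, so the two systems are isomorphic, i.e.\ measurably conjugate. The only mildly delicate point is this last paragraph — upgrading the a.e.\ relation on $D\setminus N$ to an exact relation on honestly invariant full-measure sets $X_0$ and $D_0$, as the definition of isomorphism requires — and this is handled cleanly by pushing the genericity condition to the torus side, where the translation preserves Haar measure; everything else is direct inspection of $D$ and of Figure~\ref{fig:PET-toral-rotation}.
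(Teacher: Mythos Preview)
Your argument is correct. The conjugacy you write down is the right one, the four-piece decomposition of $D$ by the lines $y_1=\ell_1-\alpha_1$ and $y_2=\ell_2-\alpha_2$ with the corresponding translation vectors is exactly the PET of Figure~\ref{fig:PET-toral-rotation}, and your care in passing from an a.e.\ intertwining to an honest isomorphism on full-measure invariant sets is well placed.

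By way of comparison, the paper's proof is a one-liner: it observes that both the toral translation and the rectangular PET are Cartesian products of their one-dimensional counterparts, and then invokes the classical fact that a circle rotation is (measurably) an exchange of two intervals. Your route is a direct 2D verification of the same identification, spelling out the measurability and the invariant full-measure sets; this buys you a self-contained argument that does not rely on the reader knowing the 1D folklore, at the cost of some extra bookkeeping. The paper's reduction to the 1D case is shorter and makes the product structure transparent, which is conceptually pleasant since the four rectangles and four translation vectors visibly factor as products of the two-interval data in each coordinate.
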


\begin{figure}[h]
\begin{center}
    \includegraphics{figures/PET-toral-rotation.pdf}
\end{center}
\caption{The polygon exchange transformation $T$ of the rectangle
    $[0,\ell_1)\times[0,\ell_2)$ as defined on the figure can be seen as a
    toral rotation by the vector $(\alpha_1,\alpha_2)$ on the torus
    $\R^2/(\ell_1\Z\times\ell_2\Z)$.}
    \label{fig:PET-toral-rotation}
\end{figure}

\begin{proof}
    It follows from the fact that toral rotations and such polygon exchange
    transformations are the Cartesian product of circle rotations and exchange
    of two intervals. The fact that a rotation on a circle can be seen as an
    exchange of two intervals is well-known as noticed for example in
    \cite{MR551341}.
\end{proof}

\begin{exo}[label={exo:PETs-easy}]
    Recall that $R_\Zcal^\bn(\bx) = \bx+\varphi^{-2}\bn$
    is a $\Z^2$-action defined on $\torus^2$ where
    $\varphi=\frac{1+\sqrt{5}}{2}$.
    Prove that the maps $R_\Zcal^{\be_1}$
    and $R_\Zcal^{\be_2}$ 
    can be expressed as polygon exchange transformations
    on the unit square $[0,1)\times[0,1)$
    as in Figure~\ref{fig:PET-golden-rotation-RZ}.
\end{exo}

\subsection{Symbolic dynamical systems for toral $\Z^2$-rotations}

Let $\Gamma$ be a lattice in $\R^2$ and
$\generictorus=\R^2/\Gamma$ be a $2$-dimensional torus.
Let $(\generictorus,\Z^2,R)$ be the dynamical system 
given by a $\Z^2$-rotation $R$ on $\generictorus$.
%Let $\Acal=\{0,1,\dots,r-1\}$.
For some finite set $\Acal$,
a \defn{topological partition} of $\generictorus$ is a finite
collection $\{P_a\}_{a\in\Acal}$ of disjoint open sets $P_a\subset\generictorus$
such that 
    $\generictorus = \bigcup_{a\in\Acal} \overline{P_a}$.
If $S\subset\Z^2$ is a finite set,
we say that a pattern $w\in\Acal^S$
is \defn{allowed} for $\Pcal,R$ if
\begin{equation}\label{eq:allowed-if-nonempty}
    \bigcap_{\bk\in S} R^{-\bk}(P_{w_\bk}) \neq \varnothing.
\end{equation}
Let $\Lcal_{\Pcal,R}$ be the collection of all allowed patterns for $\Pcal,R$.
The set $\Lcal_{\Pcal,R}$ is the language of a subshift 
$\Xcal_{\Pcal,R}\subseteq\Acal^{\Z^2}$ defined as follows,
see \cite[Prop.~9.2.4]{MR3525488},
\[
    \Xcal_{\Pcal,R} = 
    \{x\in\Acal^{\Z^2} \mid \pi_S\circ\sigma^\bn(x)\in\Lcal_{\Pcal,R}
    \text{ for every } \bn\in\Z^2 \text{ and finite subset } S\subset\Z^2\}.
\]

\begin{definition}
We call $\Xcal_{\Pcal,R}$ the \defn{symbolic dynamical
system} corresponding to $\Pcal,R$.
\end{definition}

For each $w\in\Xcal_{\Pcal,R}\subset\Acal^{\Z^2}$ and $n\geq 0$ there is a corresponding nonempty open set
\[
    D_n(w) = \bigcap_{\Vert\bk\Vert\leq n} R^{-\bk}(P_{w_\bk}) \subseteq \generictorus.
\]
The closures $\overline{D}_n(w)$ of these sets are compact
and decrease with $n$, so that
$\overline{D}_0(w)\supseteq
\overline{D}_1(w)\supseteq
\overline{D}_2(w)\supseteq
\dots$.
It follows that $\cap_{n=0}^{\infty}\overline{D}_n(w)\neq\varnothing$.
In order for points in
$\Xcal_{\Pcal,R}$
to correspond to points in $\generictorus$, this intersection should contain only one point.
This leads to the following definition.

\begin{definition}
A topological partition $\Pcal$ of $\generictorus$ \defn{gives a symbolic representation}
of $(\generictorus,\Z^2,R)$ if for every $w\in\Xcal_{\Pcal,R}$ the intersection
$\cap_{n=0}^{\infty}\overline{D}_n(w)$ consists of exactly one
point $\bx\in\generictorus$.
We call $w$ a \defn{symbolic representation of $\bx$}.
\end{definition}

In general, the existence of an atom of the partition of the torus $\generictorus$ which is invariant only under the trivial
translation is a sufficient condition for
the partition to give a symbolic representation of a minimal $\Z^2$-rotation on $\generictorus$.

\begin{lemma}\label{lem:symbolic-representation}
    {\rm\cite[Lemma~3.4]{MR4213162}}
    %Let $(\generictorus,\Z^2,R)$ be a dynamical system
    %on the torus $\generictorus=\R^2/\Gamma$
    %where $\Gamma$ is a lattice in $\R^2$
    %and $R^{\be_1}$ and $R^{\be_2}$ are toral translations.
    Let $(\generictorus,\Z^2,R)$ be a minimal dynamical system
    and $\Pcal=\{P_0,P_1,...,P_{r-1}\}$ be a topological partition
    of $\generictorus$.
    If there exists an atom $P_i$ which is invariant only under the trivial
    translation in $\generictorus$,
    then $\Pcal$ gives a symbolic representation of $(\generictorus,\Z^2,R)$.
\end{lemma}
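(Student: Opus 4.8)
The plan is to prove the statement directly. Fix $w\in\Xcal_{\Pcal,R}$; since we already know $\bigcap_{n\ge 0}\overline{D}_n(w)\neq\varnothing$, it suffices to show this intersection cannot contain two distinct points. So I would take $\bx,\bx'\in\bigcap_{n\ge 0}\overline{D}_n(w)$ and work toward $\bx=\bx'$.

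First I would translate membership in the intersection into a local statement. Since each $R^{-\bk}$ is a homeomorphism, $\overline{D}_n(w)\subseteq\bigcap_{\Vert\bk\Vert\leq n}R^{-\bk}(\overline{P_{w_\bk}})$, so $\bx\in\bigcap_{n}\overline{D}_n(w)$ forces $R^\bk(\bx)\in\overline{P_{w_\bk}}$ for every $\bk\in\Z^2$, and the same for $\bx'$. Next I would record the elementary fact that in a topological partition the open atom $P_a$ is disjoint from $\overline{P_b}$ whenever $a\neq b$: if $\by\in P_a\cap\overline{P_b}$, then the open neighbourhood $P_a$ of $\by$ would meet $P_b$, contradicting disjointness of the atoms. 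Combining the two observations: whenever $R^\bk(\bx)\in P_a$, we must have $w_\bk=a$.

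The core step uses minimality. For each letter $a$ set $C_a=\overline{\{R^\bk(\bx)\mid w_\bk=a\}}\subseteq\overline{P_a}$, and I claim $C_a=\overline{P_a}$. Indeed, take any $\by\in P_a$; by minimality the orbit $\{R^\bk(\bx)\}$ is dense in $\generictorus$, so some subsequence $R^{\bk_n}(\bx)\to\by$, and since $P_a$ is open we get $R^{\bk_n}(\bx)\in P_a$ eventually, hence $w_{\bk_n}=a$ by the previous paragraph, so $\by\in C_a$. Thus $P_a\subseteq C_a$ and, on taking closures, $\overline{P_a}\subseteq C_a\subseteq\overline{P_a}$. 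Running the identical argument with $\bx'$ gives $\overline{\{R^\bk(\bx')\mid w_\bk=a\}}=\overline{P_a}$ as well. Now put $\bv=\bx'-\bx$; because $R$ is a $\Z^2$-rotation it acts by translations, so $R^\bk(\bx')=R^\bk(\bx)+\bv$ and therefore $\overline{\{R^\bk(\bx')\mid w_\bk=a\}}=C_a+\bv=\overline{P_a}+\bv$. Comparing the two expressions yields $\overline{P_a}+\bv=\overline{P_a}$ for every $a$, in particular $\overline{P_i}+\bv=\overline{P_i}$.

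Finally I would pass from the closure back to the atom: translation by $\bv$ is a homeomorphism, so it fixes $\mathrm{int}(\overline{P_i})$ as well, and for a polygonal atom $\mathrm{int}(\overline{P_i})=P_i$, whence $P_i+\bv=P_i$. Since $P_i$ is invariant only under the trivial translation, $\bv=\zero$, i.e. $\bx=\bx'$; together with nonemptiness this says $\bigcap_{n}\overline{D}_n(w)$ is a single point, which is exactly the assertion. The only genuine subtlety I anticipate is this last reduction from invariance of $\overline{P_i}$ to invariance of $P_i$: it is immediate for the polygonal partitions considered here but in general needs the atoms to be regular open sets. Everything else is a routine combination of "the closure of an intersection lies in the intersection of closures", disjointness of topological-partition atoms, and density of orbits from minimality.
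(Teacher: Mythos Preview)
Your argument is correct and rests on the same three ingredients as the paper's proof: density of orbits from minimality, the fact that an open atom of a topological partition is disjoint from the closure of every other atom, and the translation structure of the $\Z^2$-rotation. The organization, however, is genuinely different. The paper argues by a single witness: given $\bx\neq\by$ it first chooses an open set $O\subset\overline{P_i}\setminus(\overline{P_i}-(\by-\bx))$, then uses density of the orbit of $\bx$ to find one index $\bk$ with $R^\bk\bx\in O$; this forces $w_\bk=i$, whence $\cap_n\overline{D}_n(w)\subset R^{-\bk}\overline{P_i}$, while $R^\bk\by\notin\overline{P_i}$ by construction of $O$. Your route instead proves a global statement: the set $\{R^\bk\bx\mid w_\bk=a\}$ is dense in $\overline{P_a}$ for \emph{every} letter $a$, and comparing this with the analogous set for $\bx'$ yields $\overline{P_a}+\bv=\overline{P_a}$ for all $a$ simultaneously. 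Your approach thus extracts a slightly stronger intermediate conclusion (every atom closure is $\bv$-invariant), at the cost of one more step; the paper's version is more economical but less informative. It is also worth noting that the regularity issue you flag---passing from invariance of $\overline{P_i}$ to invariance of $P_i$---is the exact same gap hidden in the paper's choice of $O$: the existence of such an open set already presupposes that $\overline{P_i}\neq\overline{P_i}-\bv$ follows from $P_i\neq P_i-\bv$, which again needs something like regular-openness. So you have not introduced any new assumption; you have merely made explicit one that the paper leaves implicit, and which is harmless for the polygonal partitions under consideration.
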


\begin{proof}
    Let $\Acal=\{0,1,\dots,r-1\}$.
    Let $w\in\Xcal_{\Pcal,R}\subset\Acal^{\Z^2}$.
    As already noticed,
    the closures $\overline{D}_n(w)$ are compact
    and decrease with $n$, so that
    $\overline{D}_0(w)\supseteq
    \overline{D}_1(w)\supseteq
    \overline{D}_2(w)\supseteq
    \dots$.
    It follows that $\cap_{n=0}^{\infty}\overline{D}_n(w)\neq\varnothing$.

    We show that $\cap_{n=0}^{\infty}\overline{D}_n(w)$
    contains at most one element.
    Let $\bx,\by\in\generictorus$.
    We assume $\bx\in \cap_{n=0}^{\infty}\overline{D}_n(w)$
    and we want to show that $\by\notin \cap_{n=0}^{\infty}\overline{D}_n(w)$
    if $\bx\neq\by$.
    Let $P_i\subset\generictorus$
    for some $i\in\Acal$ be an atom which is invariant only under the
    trivial translation. 
    Since $\bx\neq\by$,
    $\overline{P_i}\setminus (\overline{P_i}-(\by-\bx))$ contains an open set $O$.
    Since $(\generictorus,\Z^2,R)$ is minimal,
    any orbit $\{R^\bk\bx\mid\bk\in\Z^2\}$ is dense in $\generictorus$.
    Therefore, there exists $\bk\in\Z^2$ such that
    $R^\bk\bx\in O\subset\interior{P_i}$. 
    Also $\bx\in \cap_{n=0}^{\infty}\overline{D}_n(w) \subset R^{-\bk}
    \overline{P_{w_\bk}}$ which implies
    $R^\bk\bx\in\overline{P_{w_\bk}}$.
    Thus $\overline{P_{w_\bk}}\cap\interior{P_i}\neq\varnothing$ which implies
    that $P_{w_\bk}=P_i$ and $w_\bk=i$
    since $\Pcal$ is a topological partition. Thus
    \[
        \cap_{n=0}^{\infty}\overline{D}_n(w)
        \subset R^{-\bk} \overline{P_{w_\bk}}
        =R^{-\bk} \overline{P_i}.
    \]
    The fact that $R^\bk\bx\in O$ 
    also means that 
    $R^\bk\bx\notin \overline{P_i}-(\by-\bx)$
    which can be rewritten as $R^\bk\by\notin \overline{P_i}$ or
    $\by\notin R^{-\bk}\overline{P_i}$ and we conclude that $\by\notin
    \cap_{n=0}^{\infty}\overline{D}_n(w)$.
    Thus $\Pcal$ gives a symbolic representation of
    $(\generictorus,\Z^2,R)$.
\end{proof}

The set
\begin{equation*}\label{eq:boundaries}
    \Delta_{\Pcal,R}:=\bigcup_{\bn\in\Z^2}R^\bn
         \left(\bigcup_{a\in\Acal}\partial P_a\right)
         \subset \generictorus
\end{equation*}
is the set of points whose orbit under the $\Z^2$-action of $R$ intersect
the boundary of the topological partition
$\Pcal=\{P_a\}_{a\in\Acal}$.
From Baire Category Theorem \cite[Theorem 6.1.24]{MR1369092}, the set
$\generictorus\setminus\Delta_{\Pcal,R}$ is dense in $\generictorus$.

A topological partition $\Pcal=\{P_a\}_{a\in \Acal}$ of $\generictorus=\R^2/\Gamma$
is associated to a coding map
\[
\begin{array}{rccl}
    \sccode:& \generictorus\setminus\left(\bigcup_{a\in \Acal}\partial
    P_a\right) &\to & \Acal \\
    &\bx &\mapsto & a \quad\text{ if and only if }
    \quad \bx\in P_a.
\end{array}
\]
For every starting point 
$\bx\in \generictorus\setminus \Delta_{\Pcal,R}$, 
the coding of its orbit under the
$\Z^2$-action of $R$ is a configuration
$\scConfig^{\Pcal,R}_{\bx}\in\Acal^{\Z^2}$ defined by
\[
    \scConfig^{\Pcal,R}_{\bx}(\bn)=\sccode(R^\bn(\bx)).
\]
for every $\bn\in\Z\times\Z$.

\begin{lemma}\label{lem:closure-of-tilings}
The symbolic dynamical system $\Xcal_{\Pcal,R}$
    corresponding to $\Pcal,R$ is the topological closure of the set of configurations:
\[
    \Xcal_{\Pcal,R}
    = \overline{\left\{\scConfig^{\Pcal,R}_\bx\mid
                       \bx\in\generictorus\setminus \Delta_{\Pcal,R}\right\}}.
\]
\end{lemma}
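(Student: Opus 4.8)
The plan is to prove the two inclusions separately; the inclusion ``$\supseteq$'' is immediate from the definitions, and the inclusion ``$\subseteq$'' is where the real content lies.

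For ``$\supseteq$'', I would first check that every $\scConfig^{\Pcal,R}_\bx$ with $\bx\in\generictorus\setminus\Delta_{\Pcal,R}$ belongs to $\Xcal_{\Pcal,R}$. Fix $\bn\in\Z^2$ and a finite set $S\subset\Z^2$, and set $w=\pi_S\circ\sigma^\bn(\scConfig^{\Pcal,R}_\bx)$. Since $\bx\notin\Delta_{\Pcal,R}$ the coding is defined along the whole orbit and $w_\bk=\sccode(R^{\bn+\bk}(\bx))$, so $R^\bn(\bx)\in R^{-\bk}(P_{w_\bk})$ for every $\bk\in S$; hence $R^\bn(\bx)\in\bigcap_{\bk\in S}R^{-\bk}(P_{w_\bk})$, which is therefore nonempty, i.e. $w$ is allowed. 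Thus $\scConfig^{\Pcal,R}_\bx\in\Xcal_{\Pcal,R}$, and because $\Xcal_{\Pcal,R}$ is a subshift and in particular closed, the topological closure of the set of such configurations is contained in $\Xcal_{\Pcal,R}$.

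For ``$\subseteq$'', fix $w\in\Xcal_{\Pcal,R}$ and recall the sets $D_n(w)=\bigcap_{\Vert\bk\Vert\leq n}R^{-\bk}(P_{w_\bk})$. Since $w\in\Xcal_{\Pcal,R}$, the pattern $\pi_S(w)$ with $S=\{\bk\in\Z^2\mid\Vert\bk\Vert\leq n\}$ is allowed, so $D_n(w)\neq\varnothing$; moreover $D_n(w)$ is open, being a finite intersection of sets $R^{-\bk}(P_{w_\bk})$ with each $R^{-\bk}$ a homeomorphism of $\generictorus$ and each $P_{w_\bk}$ open. By the Baire category fact quoted before the statement, $\generictorus\setminus\Delta_{\Pcal,R}$ is dense in $\generictorus$, so I can pick a point $\bx_n\in D_n(w)\setminus\Delta_{\Pcal,R}$. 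Then for every $\bk$ with $\Vert\bk\Vert\leq n$ we have $R^\bk(\bx_n)\in P_{w_\bk}$, hence $\scConfig^{\Pcal,R}_{\bx_n}(\bk)=w_\bk$; in other words $\scConfig^{\Pcal,R}_{\bx_n}$ agrees with $w$ on the ball of radius $n$ around the origin. Letting $n\to\infty$ gives $\scConfig^{\Pcal,R}_{\bx_n}\to w$ in the product topology, so $w$ lies in the claimed closure.

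The one step needing care is the selection of the approximating points $\bx_n$: each must lie simultaneously in the nonempty open set $D_n(w)$, so that its coding matches $w$ on the $n$-ball, and outside $\Delta_{\Pcal,R}$, so that $\scConfig^{\Pcal,R}_{\bx_n}$ is even defined — and it is precisely the density of $\generictorus\setminus\Delta_{\Pcal,R}$ that makes this possible. Everything else is routine unwinding of the definitions of allowed pattern and of the coding map $\sccode$.
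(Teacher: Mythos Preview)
Your proof is correct and follows essentially the same approach as the paper's own proof: both directions are handled the same way, with the $\supseteq$ inclusion coming from closedness of $\Xcal_{\Pcal,R}$ and the $\subseteq$ inclusion by choosing, for each $n$, a point $\bx_n\in D_n(w)\setminus\Delta_{\Pcal,R}$ and observing that $\scConfig^{\Pcal,R}_{\bx_n}\to w$. If anything, you are more explicit than the paper in justifying that such $\bx_n$ exist, spelling out that $D_n(w)$ is open and invoking the density of $\generictorus\setminus\Delta_{\Pcal,R}$.
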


\begin{proof}
    ($\supseteq$) 
    Let $\bx\in\generictorus\setminus \Delta_{\Pcal,R}$.
    The patterns appearing in the configuration
    $\scConfig^{\Pcal,R}_\bx$ are in $\Lcal_{\Pcal,R}$.
    Thus $\scConfig^{\Pcal,R}_\bx\in\Xcal_{\Pcal,R}$.
    The topological closure of such configurations is in $\Xcal_{\Pcal,R}$ 
    since $\Xcal_{\Pcal,R}$ is topologically closed.

    ($\subseteq$)
    Let $w\in\Acal^S$ be a pattern with finite support $S\subset\Z^2$ appearing
    in $\Xcal_{\Pcal,R}$.
    Then $w\in\Lcal_{\Pcal,R}$ and from Equation~\eqref{eq:allowed-if-nonempty}
    there exists $\bx\in\generictorus\setminus\Delta_{\Pcal,R}$
    such that
        $\bx\in\bigcap_{\bk\in S} R^{-\bk}(P_{w_\bk})$.
        The pattern $w$ appears in the configuration
    $\scConfig^{\Pcal,R}_\bx$.
    Any configuration
    in $\Xcal_{\Pcal,R}$ is 
    the limit of a sequence $(w_n)_{n\in\N}$ of patterns covering
    a ball of radius $n$ around the origin,
    thus equal to
    some limit $\lim_{n\to\infty}\scConfig^{\Pcal,R}_{\bx_n}$
    with $\bx_n\in\generictorus\setminus\Delta_{\Pcal,R}$ for every $n\in\N$.
\end{proof}

\begin{exo}[label={exo:symbolic-representation}]
    Prove that $\Pcal_\Zcal$ gives a symbolic representation of the dynamical system
    $(\torus^2,\Z^2,R_\Zcal)$.
\end{exo}

\subsection{Factor map}

An important consequence of the fact that
a partition $\Pcal$ gives a symbolic representation of the dynamical system
$(\generictorus,\Z^2,R)$ is the existence of a
factor map $f:\Xcal_{\Pcal,R}\to\generictorus$ which commutes the
$\Z^2$-actions.
In the spirit of \cite[Prop.~6.5.8]{MR1369092} for $\Z$-actions,
we have the following proposition whose
proof can be found in \cite{MR4213162}.

\begin{proposition}\label{prop:factor-map}
    {\rm\cite[Prop.~5.1]{MR4213162}}
    Let $\Pcal$ give a symbolic representation of the dynamical system
    $(\generictorus,\Z^2,R)$.
    Let $f:\Xcal_{\Pcal,R}\to\generictorus$ be defined 
    such that $f(w)$ is the unique point
    in the intersection $\cap_{n=0}^{\infty}\overline{D}_n(w)$.
    The map $f$ is a factor map from
            $(\Xcal_{\Pcal,R},\Z^2,\sigma)$ to $(\generictorus,\Z^2,R)$
            such that $R^\bk\circ f = f\circ\sigma^\bk$
    %which makes the following diagram commute
    %\[ 
    %\begin{tikzcd}
    %    \Xcal_{\Pcal,R} \arrow{r}{\sigma^\bk} 
    %    \arrow[swap]{d}{f} & \Xcal_{\Pcal,R} \arrow{d}{f} \\%
    %    \generictorus \arrow{r}{R^\bk} & \generictorus
    %\end{tikzcd}
    %\]
    for every $\bk\in\Z^2$.
    The map $f$ is one-to-one on
    $f^{-1}(\generictorus\setminus\Delta_{\Pcal,R})$.
\end{proposition}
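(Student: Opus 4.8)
The plan is to verify, in this order, that $f$ is well defined, continuous, equivariant, and onto, and then to establish the partial injectivity; each assertion uses the previous ones. Well-definedness is immediate: the hypothesis that $\Pcal$ gives a symbolic representation of $(\generictorus,\Z^2,R)$ says exactly that, for every $w\in\Xcal_{\Pcal,R}$, the decreasing chain of nonempty compact sets $\overline{D}_0(w)\supseteq\overline{D}_1(w)\supseteq\cdots$ has $\bigcap_{n\geq0}\overline{D}_n(w)$ equal to a single point, which we name $f(w)$.

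For continuity, the crucial remark is that $D_n(w)$ depends only on the restriction of $w$ to the ball $\{\bk\in\Z^2\colon\Vert\bk\Vert\leq n\}$, so if $w$ and $w'$ agree on that ball then $\overline{D}_n(w)=\overline{D}_n(w')$. Fix $w$ and $\varepsilon>0$. I claim there is $N$ with $\overline{D}_N(w)\subseteq B(f(w),\varepsilon)$: otherwise one could choose $\bx_n\in\overline{D}_n(w)\setminus B(f(w),\varepsilon)$ for every $n$, and, by compactness of $\generictorus$, a convergent subsequence of $(\bx_n)_n$ would produce a point of $\bigcap_n\overline{D}_n(w)$ outside $B(f(w),\varepsilon)$, contradicting that this intersection is $\{f(w)\}$. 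Then any $w'\in\Xcal_{\Pcal,R}$ agreeing with $w$ on the ball of radius $N$ satisfies $f(w')\in\overline{D}_N(w')=\overline{D}_N(w)\subseteq B(f(w),\varepsilon)$, giving continuity at $w$.

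For equivariance I would perform the index substitution $\bp=\bq-\bk$ in
\[
    R^\bk\bigl(D_n(w)\bigr)
    =\bigcap_{\Vert\bq\Vert\leq n}R^{\bk-\bq}\bigl(P_{w_\bq}\bigr)
    =\bigcap_{\Vert\bp+\bk\Vert\leq n}R^{-\bp}\bigl(P_{(\sigma^\bk w)_\bp}\bigr),
\]
which is sandwiched as $D_{n+\Vert\bk\Vert}(\sigma^\bk w)\subseteq R^\bk(D_n(w))\subseteq D_{n-\Vert\bk\Vert}(\sigma^\bk w)$. Since $R^\bk$ is a homeomorphism it commutes with closure, so intersecting over $n$ gives $\{R^\bk(f(w))\}=R^\bk\bigl(\bigcap_n\overline{D}_n(w)\bigr)=\bigcap_n R^\bk\bigl(\overline{D}_n(w)\bigr)=\{f(\sigma^\bk w)\}$, i.e. $R^\bk\circ f=f\circ\sigma^\bk$. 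Surjectivity is then cheap: for $\bx\in\generictorus\setminus\Delta_{\Pcal,R}$ one has $\bx\in D_n(\scConfig^{\Pcal,R}_\bx)$ for every $n$, hence $f(\scConfig^{\Pcal,R}_\bx)=\bx$; so $f(\Xcal_{\Pcal,R})$ contains the dense set $\generictorus\setminus\Delta_{\Pcal,R}$, and, being the continuous image of the compact set $\Xcal_{\Pcal,R}$, it is closed, hence all of $\generictorus$. (Equivalently, combine this with Lemma~\ref{lem:closure-of-tilings}.)

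Finally, for injectivity of $f$ on $f^{-1}(\generictorus\setminus\Delta_{\Pcal,R})$: suppose $f(w)=f(w')=\bx$ with $\bx\notin\Delta_{\Pcal,R}$. For each $\bk\in\Z^2$, equivariance gives $R^\bk(\bx)=f(\sigma^\bk w)\in\overline{D}_0(\sigma^\bk w)=\overline{P_{w_\bk}}$, and likewise $R^\bk(\bx)\in\overline{P_{w'_\bk}}$; since $\bx\notin\Delta_{\Pcal,R}$, the point $R^\bk(\bx)$ lies in no boundary $\partial P_a$, hence in the open atoms $P_{w_\bk}$ and $P_{w'_\bk}$, and disjointness of $\Pcal$ forces $w_\bk=w'_\bk$. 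Thus $w=w'$. The one genuinely delicate step is the continuity argument — upgrading ``$\bigcap_n\overline{D}_n(w)$ is a single point'' to ``$\overline{D}_N(w)$ is small for some $N$'' via the compactness/subsequence trick; everything else is bookkeeping, requiring only the usual care about closures versus intersections in the equivariance computation.
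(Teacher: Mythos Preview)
The paper does not actually prove this proposition here; it only states that ``the proof can be found in \cite{labbe_markov_2021}'' and refers to \cite[Prop.~6.5.8]{MR1369092} for the one-dimensional analogue. Your argument is correct and is exactly the standard route one expects (and presumably the one in the cited reference): well-definedness from the hypothesis, continuity via the local dependence of $D_n(w)$ together with the compactness trick to force $\operatorname{diam}\overline{D}_N(w)<\varepsilon$, equivariance by the index shift and sandwich $D_{n+\Vert\bk\Vert}(\sigma^\bk w)\subseteq R^\bk(D_n(w))\subseteq D_{n-\Vert\bk\Vert}(\sigma^\bk w)$, surjectivity from $f(\scConfig^{\Pcal,R}_\bx)=\bx$ on the dense set $\generictorus\setminus\Delta_{\Pcal,R}$ plus compactness, and injectivity on the good fibers from disjointness of the open atoms. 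All steps check out.
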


Using the factor map, one can prove the following lemma.

\begin{lemma}\label{lem:minimal-aperiodic}
    {\rm\cite[Lemma~5.2]{MR4213162}}
    Let $\Pcal$ give a symbolic representation of the dynamical system
    $(\generictorus,\Z^2,R)$. Then
\begin{enumerate}[\rm (i)]
    \item if $(\generictorus,\Z^2,R)$ is minimal,
        then $(\Xcal_{\Pcal,R},\Z^2,\sigma)$ is minimal,
    \item if $R$ is a free $\Z^2$-action on $\generictorus$,
        then $\Xcal_{\Pcal,R}$ aperiodic.
\end{enumerate}
\end{lemma}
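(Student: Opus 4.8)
The plan is to run everything through the factor map $f\colon\Xcal_{\Pcal,R}\to\generictorus$ supplied by Proposition~\ref{prop:factor-map}, combined with the description of $\Xcal_{\Pcal,R}$ as a closure of explicit codings (Lemma~\ref{lem:closure-of-tilings}) and the density of $\generictorus\setminus\Delta_{\Pcal,R}$. The key point to keep exploiting is the equivariance $R^\bk\circ f=f\circ\sigma^\bk$, together with the fact that $f$ is injective on $f^{-1}(\generictorus\setminus\Delta_{\Pcal,R})$.

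For part (ii) the argument is essentially a one-liner. Suppose $w\in\Xcal_{\Pcal,R}$ satisfies $\sigma^\bp w=w$ for some $\bp\in\Z^2$. Applying $f$ and using equivariance gives $R^\bp(f(w))=f(\sigma^\bp w)=f(w)$, so $f(w)\in\generictorus$ is fixed by $R^\bp$. Since $R$ is a free $\Z^2$-action on $\generictorus$, this forces $\bp=\zero$, so $w$ is nonperiodic; as $w$ was arbitrary, $\Xcal_{\Pcal,R}$ is aperiodic. I would also remark that $\Xcal_{\Pcal,R}\neq\varnothing$ — needed for the word ``aperiodic'' — because $\generictorus\setminus\Delta_{\Pcal,R}$ is dense (Baire), hence nonempty, and any such point $\bx$ yields a configuration $\scConfig^{\Pcal,R}_\bx\in\Xcal_{\Pcal,R}$.

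For part (i), let $Y\subseteq\Xcal_{\Pcal,R}$ be nonempty, closed and $\sigma$-invariant; the goal is $Y=\Xcal_{\Pcal,R}$. First I would show $f(Y)=\generictorus$: the set $f(Y)$ is nonempty, it is compact hence closed (continuous image of a compact set in a Hausdorff space), and it is $R$-invariant since $R^\bk(f(Y))=f(\sigma^\bk(Y))=f(Y)$ for all $\bk\in\Z^2$; minimality of $(\generictorus,\Z^2,R)$ then leaves no option but $f(Y)=\generictorus$. Next, for each $\bx\in\generictorus\setminus\Delta_{\Pcal,R}$ pick $w\in Y$ with $f(w)=\bx$. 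I would identify this $w$ as $\scConfig^{\Pcal,R}_\bx$: one checks directly that $R^\bn\bx\in P_{(\scConfig^{\Pcal,R}_\bx)_\bn}$ for every $\bn$, so $\bx\in\bigcap_{n\geq0}\overline{D}_n(\scConfig^{\Pcal,R}_\bx)$, whose unique element is $f(\scConfig^{\Pcal,R}_\bx)$; thus $f(\scConfig^{\Pcal,R}_\bx)=\bx=f(w)$, and injectivity of $f$ on $f^{-1}(\generictorus\setminus\Delta_{\Pcal,R})$ gives $w=\scConfig^{\Pcal,R}_\bx\in Y$. Hence $Y$ contains $\scConfig^{\Pcal,R}_\bx$ for every $\bx\in\generictorus\setminus\Delta_{\Pcal,R}$, and since $Y$ is closed, Lemma~\ref{lem:closure-of-tilings} yields
\[
    \Xcal_{\Pcal,R}
    = \overline{\left\{\scConfig^{\Pcal,R}_\bx\mid
                       \bx\in\generictorus\setminus \Delta_{\Pcal,R}\right\}}
    \subseteq Y,
\]
so $Y=\Xcal_{\Pcal,R}$ and $(\Xcal_{\Pcal,R},\Z^2,\sigma)$ is minimal.

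Neither half presents a genuine obstacle once the factor map is in hand; the only place demanding care is the bookkeeping in part (i) — making sure that $f(Y)$ genuinely inherits closedness and $R$-invariance, and, above all, that one stays inside $\generictorus\setminus\Delta_{\Pcal,R}$ so that the injectivity clause of Proposition~\ref{prop:factor-map} applies and the $f$-preimage of $\bx$ can be pinned down to be exactly $\scConfig^{\Pcal,R}_\bx$.
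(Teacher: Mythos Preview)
Your proof is correct and follows exactly the approach the paper indicates: the paper does not give an explicit proof here, only stating that ``using the factor map, one can prove the following lemma'' and citing \cite[Lemma~5.2]{labbe_markov_2021}. Your argument carries out precisely this strategy via Proposition~\ref{prop:factor-map} and Lemma~\ref{lem:closure-of-tilings}, with the details handled correctly.
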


Of course, Lemma~\ref{lem:minimal-aperiodic}
does not hold
if $\Pcal$ does not give a symbolic representation of
$(\generictorus,\Z^2,R)$.

\begin{exo}[label={exo:symbolic-ds-minimal-aperiodic}]
    Prove that $(\Xcal_{\Pcal_\Zcal,R_\Zcal},\Z^2,\sigma)$ is minimal and aperiodic.
\end{exo}

\subsection{Induced $\Z^2$-actions}

Renormalization schemes also known as \emph{Rauzy induction} were originally
defined for dynamical systems including interval exchange transformations
(IET) \cite{MR543205}. A natural way to generalize it to higher
dimension is to consider polygon exchange transformations
\cite{MR3010377,alevy_kenyon_yi} or even polytope exchange
transformations \cite{MR3186232,schwartz_outer_2011} where only one map is
considered. But more dimensions also allow to induce two or more (commuting)
maps at the same time.

In this section, we define the induction of $\Z^2$-actions on a sub-domain.
We consider the torus $\generictorus=\R^2/\Gamma$
where $\Gamma$ is a lattice in $\R^2$.
Let $(\generictorus,\Z^2,R)$ be a minimal dynamical system
given by a $\Z^2$-action $R$ on $\generictorus$.
For every $\bn\in\Z^2$, the toral translation $R^{\bn}$ can be seen as a pair
of polygon exchange transformations on a fundamental domain of
$\generictorus$.

Let $W\subset\generictorus$ be a set.
The \defn{set of return times} of $\bx\in\generictorus$
to the \defn{window} $W$ under the $\Z^2$-action $R$ is the subset of
$\Z\times\Z$ defined as:
\[
    \delta_W(\bx) = \{ \bn\in\Z\times\Z\mid R^\bn(\bx)\in W \}.
\]

\begin{definition}\label{def:Cartesian-on-a-window}
Let $W\subset\generictorus$.
    We say that the $\Z^2$-action $R$ is \defn{Cartesian on} $W$ if
the set of return times $\delta_W(\bx)$ can be expressed as a Cartesian product,
that is,
for all $\bx\in\generictorus$
there exist two strictly increasing sequences $r_\bx^{(1)},r_\bx^{(2)}:\Z\to\Z$
such that
\[
\delta_W(\bx)=r_\bx^{(1)}(\Z)\times r_\bx^{(2)}(\Z).
\]
We always assume that the sequences are
shifted in such a way that
\[
r_\bx^{(i)}(n)\geq 0 \iff n\geq0
\qquad
\text{ for }
i\in\{1,2\}.
\]
In particular, if $\bx\in W$ then $(0,0)\in\delta_W(\bx)$, 
and therefore $r_\bx^{(1)}(0)=r_\bx^{(2)}(0)=0$.
\end{definition}

When the $\Z^2$-action $R$ is Cartesian on $W\subset\generictorus$,
we say that the tuple
\begin{equation}\label{eq:first-return-time-tuple}
    (r_\bx^{(1)}(1),r_\bx^{(2)}(1))
\end{equation}
is
the \defn{first return time} of a starting point $\bx\in\generictorus$
to $W\subset\generictorus$ under the action $R$.
When the $\Z^2$-action $R$ is Cartesian on $W\subset\generictorus$,
we may consider its return map on $W$ and we prove in the next lemma that this
induces a $\Z^2$-action on $W$.

\begin{lemma}\label{lem:Cartesian-induced-action}
If the $\Z^2$-action $R$ is Cartesian on $W\subset\generictorus$, then
the map
    $\widehat{R}|_W:\Z^2\times W\to W$ defined by
\[
    (\widehat{R}|_W)^\bn(\bx):=\widehat{R}|_W(\bn,\bx)=
    R^{(r_\bx^{(1)}(n_1),r_\bx^{(2)}(n_2))}(\bx)
\]
for every $\bn=(n_1,n_2)\in\Z^2$ 
and $\bx\in W$
    is a well-defined $\Z^2$-action on $W$.
\end{lemma}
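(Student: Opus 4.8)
The statement to prove is that $\widehat{R}|_W$ is a well-defined $\Z^2$-action on $W$. There are two things to check: (a) that the map lands in $W$ (well-definedness of the codomain), and (b) that it satisfies the group action axioms, namely $(\widehat{R}|_W)^{\zero}=\mathrm{id}_W$ and $(\widehat{R}|_W)^{\Bm+\bn}=(\widehat{R}|_W)^{\Bm}\circ(\widehat{R}|_W)^{\bn}$ for all $\Bm,\bn\in\Z^2$. The plan is to reduce everything to the two defining properties of the Cartesian hypothesis: that $\delta_W(\bx)=r_\bx^{(1)}(\Z)\times r_\bx^{(2)}(\Z)$ is a product of the images of two strictly increasing sequences, normalized so that $r_\bx^{(i)}(0)=0$ when $\bx\in W$, and that $R$ itself is a genuine $\Z^2$-action.

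First I would handle (a): if $\bx\in W$ and $\bn=(n_1,n_2)\in\Z^2$, then by definition $(r_\bx^{(1)}(n_1),r_\bx^{(2)}(n_2))\in r_\bx^{(1)}(\Z)\times r_\bx^{(2)}(\Z)=\delta_W(\bx)$, which by the very definition of $\delta_W$ means exactly $R^{(r_\bx^{(1)}(n_1),r_\bx^{(2)}(n_2))}(\bx)\in W$. So the map does land in $W$. For the identity axiom, note that $\bx\in W$ forces $r_\bx^{(1)}(0)=r_\bx^{(2)}(0)=0$, hence $(\widehat{R}|_W)^{\zero}(\bx)=R^{\zero}(\bx)=\bx$.

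The heart of the argument is the composition law, and this is the step I expect to be the main obstacle since it is where the return-time sequences of \emph{different} base points must be related. Fix $\bx\in W$ and $\bn=(n_1,n_2)\in\Z^2$; write $\by=(\widehat{R}|_W)^{\bn}(\bx)=R^{\bt}(\bx)$ where $\bt=(r_\bx^{(1)}(n_1),r_\bx^{(2)}(n_2))$, and note $\by\in W$. I would establish the cocycle-type identity
\[
    r_\by^{(i)}(m) = r_\bx^{(i)}(n_i+m) - r_\bx^{(i)}(n_i)
    \qquad\text{for all }m\in\Z,\ i\in\{1,2\}.
\]
To see this for a fixed $i$, observe that $\bk\in\delta_W(\by)$ iff $R^{\bk}(\by)\in W$ iff $R^{\bk+\bt}(\bx)\in W$ iff $\bk+\bt\in\delta_W(\bx)=r_\bx^{(1)}(\Z)\times r_\bx^{(2)}(\Z)$; projecting onto the $i$-th coordinate and using that $r_\bx^{(i)}$ is a strictly increasing bijection onto its image, this says $r_\by^{(i)}(\Z)=r_\bx^{(i)}(\Z)-r_\bx^{(i)}(n_i)$ as subsets of $\Z$, and since both $r_\by^{(i)}$ and $m\mapsto r_\bx^{(i)}(n_i+m)-r_\bx^{(i)}(n_i)$ are the unique strictly increasing enumerations of that set sending $0$ to $0$ (the normalization $r_\by^{(i)}(0)=0$ holds because $\by\in W$), they coincide. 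Granting this identity, I compute
\[
    (\widehat{R}|_W)^{\Bm}\bigl((\widehat{R}|_W)^{\bn}(\bx)\bigr)
    = (\widehat{R}|_W)^{\Bm}(\by)
    = R^{(r_\by^{(1)}(m_1),\,r_\by^{(2)}(m_2))}(\by)
    = R^{(r_\by^{(1)}(m_1),\,r_\by^{(2)}(m_2))}\bigl(R^{\bt}(\bx)\bigr),
\]
and since $R$ is a $\Z^2$-action the two exponents add, giving exponent $\bigl(r_\bx^{(1)}(n_1)+r_\by^{(1)}(m_1),\ r_\bx^{(2)}(n_2)+r_\by^{(2)}(m_2)\bigr)$, which by the cocycle identity equals $\bigl(r_\bx^{(1)}(n_1+m_1),\ r_\bx^{(2)}(n_2+m_2)\bigr)$, i.e. exactly $(\widehat{R}|_W)^{\Bm+\bn}(\bx)$. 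Continuity of $\widehat{R}|_W$ is not part of the claimed statement (the lemma only asserts it is a well-defined $\Z^2$-action), so I would not dwell on it, though if needed it follows on the complement of the boundary locus from the piecewise-translation structure. The only genuinely delicate point is the uniqueness-of-enumeration argument that pins down $r_\by^{(i)}$; everything else is bookkeeping with the group law of $R$.
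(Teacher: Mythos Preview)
Your proof is correct. The single cocycle identity
\[
    r_\by^{(i)}(m) = r_\bx^{(i)}(n_i+m) - r_\bx^{(i)}(n_i),\qquad \by=(\widehat{R}|_W)^{\bn}(\bx),
\]
which you justify cleanly via $\delta_W(\by)=\delta_W(\bx)-\bt$ and uniqueness of the normalized increasing enumeration, gives the full composition law in one stroke. The paper instead decomposes $\bk+\bn$ into axis-parallel moves: it first shows $(\widehat{R}|_W)^{(k+n)\be_i}=(\widehat{R}|_W)^{k\be_i}\circ(\widehat{R}|_W)^{n\be_i}$ using the same-direction special case of your identity, then separately argues that $r_\bx^{(1)}$ is unchanged when one moves $\bx$ by the induced action in direction $\be_2$ (the cross-direction special case), and finally reassembles the general $(\widehat{R}|_W)^{\bk+\bn}$ from these pieces. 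Your route is more economical since a single two-variable identity replaces the paper's two separate lemmas and the reassembly step; the paper's decomposition, on the other hand, makes explicit that the induced action is generated by the two commuting first-return maps $\widehat{R^{\be_1}}|_W$ and $\widehat{R^{\be_2}}|_W$, which is the form used immediately afterwards. You also handle the well-definedness of the codomain explicitly, which the paper omits.
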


We say that $\widehat{R}|_W$ is the \defn{induced $\Z^2$-action} of the $\Z^2$-action $R$ on $W$.

\begin{proof}
    Let $\bx\in W$.
    We have that
    \[
    \widehat{R}|_W(\zero,\bx)
    = R^{(r_\bx^{(1)}(0),r_\bx^{(2)}(0))}(\bx)
    = R^{(0,0)}(\bx)
    = \bx.
    \]
    Let $n\in\Z$ and
    $\bx'=\widehat{R}|_W(n\be_i,\bx)=R^{\be_i\cdot r_\bx^{(i)}(n)}(\bx)$
    be the $n$-th horizontal return to $W$ under $R^{\be_i}$ of the point $\bx$.
    For every $k\in\Z$, the horizontal return times to $\bx$ and to $\bx'$ satisfy
    the equation
    \[
        r_\bx^{(i)}(k+n) = r^{(i)}_{\bx'}(k)+r_\bx^{(i)}(n).
    \]
    We get
    \begin{align*}
    \widehat{R}|_W(k\be_i+n\be_i,\bx)
        &= R^{\be_i\cdot r_\bx^{(i)}(k+n)}(\bx)
         = R^{\be_i\cdot(r_{\bx'}(k)+r_\bx^{(i)}(n))}(\bx)\\
        &= R^{\be_i\cdot r_{\bx'}(k)}
           \left(R^{\be_i\cdot r_\bx^{(i)}(n)}(\bx)\right)
         = R^{\be_i\cdot r_{\bx'}(k)} \left(\bx'\right)\\
        &= R^{\be_i\cdot r_{\widehat{R}|_W(n\be_i,\bx)}(k)}
           \left(\widehat{R}|_W(n\be_i,\bx)\right)\\
        &= \widehat{R}|_W\left(k\be_i, \left(\widehat{R}|_W(n\be_i,\bx)\right)\right).
    \end{align*}
    Secondly, using the fact that
    \[
        r_\bx^{(1)}(k_1) = r_\by^{(1)}(k_1)
    \]
    whenever $\by=R^{(0,r_\bx^{(2)}(k_2))}\bx=\widehat{R}|_W(k_2\be_2,\bx)$, we
    obtain
    \begin{align*}
    \widehat{R}|_W(\bk,\bx)
    &= R^{(r_\bx^{(1)}(k_1),r_\bx^{(2)}(k_2))}(\bx)
    = R^{(r_\bx^{(1)}(k_1),0)} R^{(0,r_\bx^{(2)}(k_2))}(\bx)\\
    &= R^{(r_\bx^{(1)}(k_1),0)} \widehat{R}|_W(k_2\be_2,\bx)
    = \widehat{R}|_W\left(k_1\be_1,
      \widehat{R}|_W(k_2\be_2,\bx)\right).
    \end{align*}
    Therefore, for every $\bk,\bn\in\Z^2$, we have
    \begin{align*}
        (\widehat{R}|_W)^{\bk+\bn}(\bx)
    &= (\widehat{R}|_W)^{(k_1+n_1)\be_1}
        (\widehat{R}|_W)^{(k_2+n_2)\be_2}(\bx)\\
     &= (\widehat{R}|_W)^{k_1\be_1} (\widehat{R}|_W)^{n_1\be_1}
        (\widehat{R}|_W)^{k_2\be_2} (\widehat{R}|_W)^{n_2\be_2}(\bx)\\
     &= (\widehat{R}|_W)^{k_1\be_1}
        (\widehat{R}|_W)^{(n_1,k_2)} (\widehat{R}|_W)^{n_2\be_2}(\bx)\\
     &= (\widehat{R}|_W)^{k_1\be_1} (\widehat{R}|_W)^{k_2\be_2}
        (\widehat{R}|_W)^{n_1\be_1} (\widehat{R}|_W)^{n_2\be_2}(\bx)\\
     &= (\widehat{R}|_W)^\bk (\widehat{R}|_W)^\bn(\bx),
    \end{align*}
    which shows that $\widehat{R}|_W$ is a $\Z^2$-action on $W$.
\end{proof}

A consequence of the lemma is that
the induced $\Z^2$-action $\widehat{R}|_W$
is generated by two commutative maps
\[
    (\widehat{R}|_W)^{\be_1}(\bx)
    = R^{(r_\bx^{(1)}(1),0)}(\bx)
    \quad\text{ and }\quad
    (\widehat{R}|_W)^{\be_2}(\bx)
    = R^{(0,r_\bx^{(2)}(1))}(\bx)
\]
which are the first return maps of $R^{\be_1}$ and $R^{\be_2}$
to $W$:
\[
    (\widehat{R}|_W)^{\be_1}(\bx)
    = \widehat{R^{\be_1}}|_W(\bx)
    \quad\text{ and }\quad
    (\widehat{R}|_W)^{\be_2}(\bx)
    = \widehat{R^{\be_2}}|_W(\bx).
\]
Recall that the \defn{first return map}
$\widehat{T}|_W$ of a dynamical system $(X,T)$ maps a point $\bx\in W\subset X$
to the first point in the forward orbit of
$T$ lying in $W$, i.e.
\[
    \widehat{T}|_W(\bx) = T^{r(\bx)}(\bx) \quad\text{ where }
    r(\bx) = \min\{k\in\Z_{>0} : T^k(\bx)\in W\}.
\]
From Poincar\'e's recurrence theorem,
if $\mu$ is a finite $T$-invariant measure on $X$, then
the first return map $\widehat{T}|_W$ is well
defined for $\mu$-almost all $\bx\in W$.
When $T$ is a rotation on a torus, if the subset $W$ is
open, then the first return map is well-defined for every point $\bx\in W$.
Moreover if $W$ is a polygon, then the first return map $\widehat{T}|_W$
is a polygon exchange transformation.
An algorithm to compute the induced transformation
$\widehat{T}|_W=\widehat{R^{\be_i}}|_W$ of the sub-action $R^{\be_i}$
is provided in \cite{MR4347332}.

\begin{exo}[label={exo:PET-on-triangle-nontrivial}]
    Let $T(\bx) = \bx+(\frac{1}{3},\frac{1}{2})$
    be a $\Z$-action defined on $\torus^2$.
    Let $W\subset\torus^2$ be the triangle with vertices $(0,0)$, $(1,0)$ and
    $(0,1)$. Prove that $\widehat{T}|_W$ is the PET defined
    in Figure~\ref{fig:PET-triangle}.
\end{exo}

\begin{exo}[label={exo:induction-z2-action}]
    Recall that $R_\Zcal(\bn,\bx) = \bx+\varphi^{-2}\bn$
    is a $\Z^2$-action defined on $\torus^2$ where
    $\varphi=\frac{1+\sqrt{5}}{2}$.
    Let $W_0=(0,1)\times(0,\varphi^{-1})+\Z^2$ be a subset of $\torus^2$.
    \begin{itemize}
        \item Prove that the action $R_\Zcal$ is Cartesian on $W_0$.
        \item Prove that $\widehat{R_\Zcal}|_{W_0}:\Z^2\times W_0\to W_0$ is a
            well-defined induced $\Z^2$-action.
        \item Describe $\widehat{R_\Zcal^{\be_1}}|_{W_0}$ and 
                       $\widehat{R_\Zcal^{\be_2}}|_{W_0}$
                       as polygon exchange transformations on $W_0$.
        \item Describe $\widehat{R_\Zcal}|_{W_0}$ as a toral $\Z^2$-rotation on
            $\R^2/\Gamma_1$ with $\Gamma_1=\Z\times(\varphi^{-1}\Z)$.
    \end{itemize}
\end{exo}

\subsection{Toral partitions induced by toral $\Z^2$-rotations}

For IETs, the interval on which we define the Rauzy induction is usually given
by one of the atom of the partition which defines the IET itself. In our setting, it is not the
case. The partition that we use carries more information than the natural
partition which allows to define the $\Z^2$-rotation $R$ as a pair of polygon
exchange transformations.
The partition is a refinement  of the natural partition 
involving well-chosen sloped lines.

Let $\Gamma$ be a lattice in $\R^2$ and
$\generictorus=\R^2/\Gamma$ be a $2$-dimensional torus.
Let $(\generictorus,\Z^2,R)$ be the dynamical system 
given by a $\Z^2$-rotation $R$ on $\generictorus$.
Assuming the $\Z^2$-rotation $R$ is Cartesian on a window $W\subset\generictorus$,
then there exist
two strictly increasing sequences $r_\bx^{(1)},r_\bx^{(2)}:\Z\to\Z$ 
such that
\begin{equation}\label{eq:r-and-s}
    (r,s) = \left(r_\bx^{(1)}(1),r_\bx^{(2)}(1)\right)
\end{equation}
is the first return time of a starting point $\bx\in\generictorus$ to the
window $W$ under the action $R$, see
Equation~\eqref{eq:first-return-time-tuple}.
It allows to define the \defn{return word map} as
\[
\begin{array}{rccl}
    \scReturnWord:&W & \to & \Acal^{*^2}\\
    &\bx&\mapsto &
    \left(\begin{array}{ccc}
        \sccode(R^{(0,s-1)}\bx) & \cdots & 
        \sccode(R^{(r-1,s-1)}\bx)\\
        \cdots & \cdots & \cdots\\
        \sccode(R^{(0,0)}\bx) & \cdots & \sccode(R^{(r-1,0)}\bx)
    \end{array}\right)
\end{array},
\]
where $r,s\geq 1$ both obviously depend on $\bx$.

The image $\Lcal=\scReturnWord(W)\subset \Acal^{*^2}$ is a language called the \defn{set of return words}.
We identify each return word in $\Lcal$ to a letter $b$ of an
alphabet $\Bcal$ in such a way that $\Lcal=\{w_b\}_{b\in\Bcal}$.
When the return time to $W$ is bounded, the set of return words $\Lcal$ is
finite and the alphabet $\Bcal$ is finite. The examples that we consider in this
chapter are such that the return time is bounded, but this is not true in
general.

\begin{remark}
    The way the enumeration of $\Lcal$ is done influences the substitutions
    which are obtained afterward. To obtain a canonical ordering when the words are 1-dimensional, 
    we use the total order $(\Lcal,\prec)$
    defined by $u\prec v$ if $|u|<|v|$ or $|u|=|v|$ and $u<_{lex}v$.
    %See Line~\ref{algo:line:prec} of Algorithm~\ref{alg:induced-partition}.
\end{remark}

The \defn{induced partition} of $\Pcal$ by the action of $R$ on the sub-domain
$W$ is a topological partition of $W$ defined as the set of preimage sets
under $\scReturnWord$:
\[
    \widehat{\Pcal}|_W=\{\scReturnWord^{-1}(w_b)\}_{b\in\Bcal}.
\]
This yields the \defn{induced coding} on $W$
\[
\begin{array}{rccl}
    \sccode|_W:&W & \to &\Bcal\\
    &\by & \mapsto & b \quad \text{ if and only if } 
                       \quad\by\in\scReturnWord^{-1}(w_b).
\end{array}
\]
A \defn{natural substitution} comes out of this induction procedure:
\begin{equation}\label{eq:induced-substitution}
\begin{array}{rccl}
    \omega:&\Bcal & \to & \Acal^{*^2}\\
    &b & \mapsto & w_b.
\end{array}
\end{equation}
The partition $\widehat{\Pcal}|_W$ of $W$ can be effectively computed by the
refinement of the partition $\Pcal$ with translated copies of the sub-domain $W$
under the action of $R$. 
In \cite{MR4347332},
we propose an algorithm
to compute the induced partition $\widehat{\Pcal}|_W$ 
and substitution $\omega$. An implementation of it in SageMath
is provided in the optional package \texttt{slabbe} \cite{labbe_slabbe_0_7_6_2023} 
and is used below on an example.
The next result shows that
the coding of the orbit under the $\Z^2$-rotation $R$ is
the image under the
2-dimensional substitution $\omega$ of the coding of the orbit under
the $\Z^2$-action $\widehat{R}|_W$.

\begin{lemma}\label{lem:desubstitute}
If the $\Z^2$-action $R$ is Cartesian on a window $W\subset\generictorus$,
    then $\omega$ is a $2$-dimensional morphism, and
    for every $\bx\in W$ we have
    \[
        \scConfig^{\Pcal,R}_{\bx}
        = \omega\left(
        \scConfig^{\widehat{\Pcal}|_W,\widehat{R}|_W}_{\bx}
        \right).
    \]
\end{lemma}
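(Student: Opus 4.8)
The plan is to prove Lemma~\ref{lem:desubstitute} in two stages: first establish that $\omega$ is a $2$-dimensional morphism, and then verify the configuration identity by comparing the two codings position by position.

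\textbf{Step 1: $\omega$ is a $2$-dimensional morphism.} By definition, $\omega(b)=w_b=\scReturnWord(\by)$ for any $\by$ coding to $b$, so I need to show that if $u\odot^i v$ is a valid concatenation of return words in $\Lcal$, then $\omega(u)\odot^i\omega(v)$ is defined and equals $\omega(u\odot^i v)$. The key observation is that return words tile the plane coherently: if $\by\in W$ has first return time $(r,s)$ and, say, $(\widehat{R}|_W)^{\be_1}(\by)=R^{(r,0)}(\by)=\by'$ has first return time $(r',s')$ with $s'=s$ (the heights must match for $\odot^1$ to be defined), then the $r\times s$ block coding the $R$-orbit of $\by$ and the $r'\times s$ block coding the $R$-orbit of $\by'$ sit side by side in $\scConfig^{\Pcal,R}_\by$, since $\scConfig^{\Pcal,R}_\by(\bn)=\sccode(R^\bn(\by))$ and the second block is just the first shifted by $(r,0)$. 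Their concatenation is therefore exactly the return word one reads from $\by$ for the induced action composed twice, which by the Cartesian structure (Lemma~\ref{lem:Cartesian-induced-action}, and the formula $r_\bx^{(i)}(k+n)=r_{R^{\be_i\cdot r_\bx^{(i)}(n)}(\bx)}(k)+r_\bx^{(i)}(n)$ used in its proof) is well-defined; matching heights guarantee the shapes are compatible. The analogous argument in direction $\be_2$ completes this step.

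\textbf{Step 2: the configuration identity.} Fix $\bx\in W$. I want to show that at every position $\bn\in\Z^2$ the two sides agree. The right-hand side $\omega\left(\scConfig^{\widehat{\Pcal}|_W,\widehat{R}|_W}_{\bx}\right)$ is built, via the extension of $\omega$ to configurations described in the preliminaries, by placing the return word $\omega\left(\scConfig^{\widehat{\Pcal}|_W,\widehat{R}|_W}_{\bx}(\bk)\right)=\scReturnWord\left((\widehat{R}|_W)^\bk(\bx)\right)$ in the appropriate block, with the convention that the origin lands at position $\zero$ inside the image of the letter at $\bk=\zero$ (which is legitimate since $\bx\in W$ forces $r_\bx^{(1)}(0)=r_\bx^{(2)}(0)=0$). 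Because $R$ is Cartesian on $W$, the return times $r_\bx^{(1)}(\Z)\times r_\bx^{(2)}(\Z)$ partition $\Z^2$ into rectangular blocks indexed by $\bk\in\Z^2$, the block for $\bk$ running over $\ZZrange{r_\bx^{(1)}(k_1)}{r_\bx^{(1)}(k_1+1)-1}\times\ZZrange{r_\bx^{(2)}(k_2)}{r_\bx^{(2)}(k_2+1)-1}$. For $\bn$ in the block of index $\bk$, write $\bn = \left(r_\bx^{(1)}(k_1),r_\bx^{(2)}(k_2)\right)+\bj$ with $\bj$ the offset inside the block. On one hand the entry of $\scConfig^{\Pcal,R}_\bx$ at $\bn$ is $\sccode\left(R^\bn(\bx)\right)=\sccode\left(R^\bj\left((\widehat{R}|_W)^\bk(\bx)\right)\right)$ by the definition of $\widehat{R}|_W$. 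On the other hand, by the definition of $\scReturnWord$, this is precisely the entry at offset $\bj$ of the return word $\scReturnWord\left((\widehat{R}|_W)^\bk(\bx)\right)$, which is the entry of the right-hand side at $\bn$. Hence the two configurations coincide everywhere.

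\textbf{Main obstacle.} The routine-looking but genuinely delicate point is the bookkeeping of offsets: one must check that the block decomposition used to extend $\omega$ to configurations (placing $\omega(x_\zero)$ with its origin at $\zero$) lines up exactly with the block decomposition coming from the return times $r_\bx^{(i)}(\Z)$, including the correct handling of negative indices and the normalization $r_\bx^{(i)}(n)\geq 0\iff n\geq 0$. Once the indexing conventions are pinned down, both stages reduce to unwinding definitions, so the substance of the lemma lies entirely in the Cartesian hypothesis guaranteeing that the return words fit together without overlap or gap.
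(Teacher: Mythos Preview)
Your proposal is correct and follows essentially the same approach as the paper's proof: both use the Cartesian hypothesis to decompose $\Z^2$ into a lattice of rectangular blocks indexed by the return times $r_\bx^{(1)}(\Z)\times r_\bx^{(2)}(\Z)$, identify each block with a return word $\scReturnWord\bigl((\widehat{R}|_W)^{(i,j)}\bx\bigr)=\omega(b_{ij})$, and deduce both the morphism property (from adjacent blocks having matching dimensions) and the configuration identity (from the block-by-block agreement). The paper's version is slightly more compact, handling the morphism property and the identity together rather than in two separate steps, but the substance is the same.
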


\begin{proof}
    Let $\bx\in W$.
Since the $\Z^2$-action $R$ is Cartesian on $W$,
there exist two strictly increasing sequences $r_\bx^{(1)},r_\bx^{(2)}:\Z\to\Z$
such that $\delta_W(\bx)=r_\bx^{(1)}(\Z)\times r_\bx^{(2)}(\Z)$.
Since $\bx\in W$, we have $(0,0)\in\delta_W(\bx)$
and $r_\bx^{(1)}(0)=r_\bx^{(2)}(0)=0$.
To use a lighter notation in the argument that follows, let
$r_i=r_\bx^{(1)}(i)$
and
$s_j=r_\bx^{(2)}(j)$ for every $i,j\in\Z$.

    Using these two increasing sequences, $\scConfig^{\Pcal,R}_{\bx}$ may be decomposed into
    a lattice of rectangular blocks.
    More precisely, for every $i,j\in\Z$, the following block is the image of
    a letter under $\omega$:
    \begin{align*}
        &\left(\begin{array}{ccc}
            \sccode(R^{(r_i,s_{j+1}-1)}\bx) & \cdots 
                & \sccode(R^{(r_{i+1}-1,s_{j+1}-1)}\bx)\\
            \cdots & \cdots & \cdots\\
            \sccode(R^{(r_i,s_j)}\bx) & \cdots 
                & \sccode(R^{(r_{i+1}-1,s_j)}\bx)
        \end{array}\right)\\
        &\qquad= \scReturnWord(R^{(r_i,s_j)}\bx)
        =w_{b_{ij}}=\omega(b_{ij})
    \end{align*}
    for some letter $b_{ij}\in\Bcal$.
    Moreover,
    \[
        b_{ij}
        = \sccode|_W(R^{(r_i,s_j)}\bx)
        = \sccode|_W\left((\widehat{R}|_W)^{(i,j)}\bx\right).
    \]
    Since the adjacent blocks have matching dimensions, 
    for every $i,j\in\Z$, the following concatenations
    \begin{align*}
        & \omega\left(b_{ij}\odot^1 b_{(i+1)j}\right) = 
        \omega\left(b_{ij}\right)\odot^1 \omega\left(b_{(i+1)j}\right)
        \qquad\text{ and }
        \\
        & \omega\left(b_{ij}\odot^2 b_{i(j+1)}\right) = 
        \omega\left(b_{ij}\right)\odot^2 \omega\left(b_{i(j+1)}\right)
    \end{align*}
    are well defined. Thus $\omega$ is a $2$-dimensional morphism
    on the set
    $\left\{ \scConfig^{\widehat{\Pcal}|_W,\widehat{R}|_W}_{\bx}
            \,\middle|\, \bx\in W\right\}$
    and we have
    \[
        \scConfig^{\Pcal,R}_{\bx}
        =
        \omega\left(\scConfig^{\widehat{\Pcal}|_W,\widehat{R}|_W}_{\bx}\right)
    \]
    which ends the proof.
    Note that the domain of $\omega$ can be extended to
    its topological closure.
\end{proof}

\begin{proposition}\label{prop:orbit-preimage}
    Let $\Pcal$ be a topological partition of $\generictorus$.
If the $\Z^2$-action $R$ is Cartesian on a window $W\subset\generictorus$,
    then $\Xcal_{\Pcal,R}=\shiftclosure{\omega(\Xcal_{\widehat{\Pcal}|_W,\widehat{R}|_W})}$.
\end{proposition}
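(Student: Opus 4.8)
The strategy is to prove the two inclusions separately, leveraging the description of $\Xcal_{\Pcal,R}$ as a topological closure of coding configurations given by Lemma~\ref{lem:closure-of-tilings}, together with the pointwise substitutive identity from Lemma~\ref{lem:desubstitute}. First I would recall that by Lemma~\ref{lem:closure-of-tilings} we have
\[
    \Xcal_{\Pcal,R}
    = \overline{\left\{\scConfig^{\Pcal,R}_\bx\mid
                       \bx\in\generictorus\setminus \Delta_{\Pcal,R}\right\}}
\]
and similarly
\[
    \Xcal_{\widehat{\Pcal}|_W,\widehat{R}|_W}
    = \overline{\left\{\scConfig^{\widehat{\Pcal}|_W,\widehat{R}|_W}_\bx\mid
                       \bx\in W\setminus \Delta_{\widehat{\Pcal}|_W,\widehat{R}|_W}\right\}}.
\]

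For the inclusion $\Xcal_{\Pcal,R}\subseteq\overline{\omega(\Xcal_{\widehat{\Pcal}|_W,\widehat{R}|_W})}^\sigma$, I would take a configuration of the form $\scConfig^{\Pcal,R}_\bx$ with $\bx\in\generictorus\setminus\Delta_{\Pcal,R}$. Since the $\Z^2$-action $R$ is minimal (being a rotation, hence by hypothesis Cartesian on $W$ forces the standing assumptions of the section), the orbit of $\bx$ meets the open window $W$; pick $\bn_0\in\delta_W(\bx)$, so that $\by:=R^{\bn_0}(\bx)\in W$. One checks $\by\notin\Delta_{\widehat{\Pcal}|_W,\widehat{R}|_W}$ and that $\scConfig^{\Pcal,R}_\by = \sigma^{\bn_0}\scConfig^{\Pcal,R}_\bx$. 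Applying Lemma~\ref{lem:desubstitute} at $\by$ gives $\scConfig^{\Pcal,R}_\by = \omega(\scConfig^{\widehat{\Pcal}|_W,\widehat{R}|_W}_\by)$, hence $\scConfig^{\Pcal,R}_\bx = \sigma^{-\bn_0}\omega(\scConfig^{\widehat{\Pcal}|_W,\widehat{R}|_W}_\by)\in\overline{\omega(\Xcal_{\widehat{\Pcal}|_W,\widehat{R}|_W})}^\sigma$. Since the right-hand side is closed (it is a subshift, by the exercise following Lemma~\ref{lem:minimal-implies-minimal}, as $\omega$ extends to a continuous map and $\Xcal_{\widehat{\Pcal}|_W,\widehat{R}|_W}$ is a subshift), taking the closure over all such $\bx$ yields $\Xcal_{\Pcal,R}\subseteq\overline{\omega(\Xcal_{\widehat{\Pcal}|_W,\widehat{R}|_W})}^\sigma$.

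For the reverse inclusion, take $\bk\in\Z^2$ and $w\in\Xcal_{\widehat{\Pcal}|_W,\widehat{R}|_W}$; I must show $\sigma^\bk\omega(w)\in\Xcal_{\Pcal,R}$. Since $\Xcal_{\Pcal,R}$ is shift-invariant it suffices to treat $\bk=\zero$, i.e.\ show $\omega(w)\in\Xcal_{\Pcal,R}$. Because $\Xcal_{\Pcal,R}$ is closed and $\omega$ is continuous, and because $w$ is a limit of configurations $\scConfig^{\widehat{\Pcal}|_W,\widehat{R}|_W}_{\by_n}$ with $\by_n\in W\setminus\Delta_{\widehat{\Pcal}|_W,\widehat{R}|_W}$, it is enough to show $\omega(\scConfig^{\widehat{\Pcal}|_W,\widehat{R}|_W}_{\by})\in\Xcal_{\Pcal,R}$ for each such $\by$; but Lemma~\ref{lem:desubstitute} identifies this with $\scConfig^{\Pcal,R}_{\by}$, which lies in $\Xcal_{\Pcal,R}$ by Lemma~\ref{lem:closure-of-tilings} (noting $\by\notin\Delta_{\Pcal,R}$ since return-word preimage boundaries are accounted for). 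This completes the second inclusion.

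The main obstacle I anticipate is bookkeeping around the boundary sets: making sure that the ``generic'' points used on the induced system (avoiding $\Delta_{\widehat{\Pcal}|_W,\widehat{R}|_W}$) map to generic points of the ambient system (avoiding $\Delta_{\Pcal,R}$), and conversely that a generic $\bx$ in $\generictorus$ has an orbit point landing genuinely inside $W$ (not on $\partial W$) — this is where minimality of $R$ and openness of $W$ are used. A cleaner alternative, avoiding any explicit handling of boundaries, is to argue entirely at the level of languages: show $\Lcal_{\Pcal,R}$ equals the factorial closure of $\omega(\Lcal_{\widehat{\Pcal}|_W,\widehat{R}|_W})$ by using the block decomposition from the proof of Lemma~\ref{lem:desubstitute} to read off any allowed pattern of $\Xcal_{\Pcal,R}$ as a factor of $\omega$ applied to an allowed pattern of the induced system, and vice versa. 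Either route reduces the proposition to Lemma~\ref{lem:desubstitute} plus the closure/shift-invariance formalism; I would present the orbit-based argument above as the primary one since it is shortest.
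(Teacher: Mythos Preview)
Your proposal is correct and follows essentially the same route as the paper: both argue the two inclusions by combining Lemma~\ref{lem:closure-of-tilings} with the pointwise identity of Lemma~\ref{lem:desubstitute}, shifting a generic point of $\generictorus$ into $W$ for one direction and applying $\omega$ directly for the other. The paper's version is terser---it works with the sets $Y=\{\scConfig^{\Pcal,R}_{\bx}:\bx\in\generictorus\}$ and $Z=\{\scConfig^{\widehat{\Pcal}|_W,\widehat{R}|_W}_{\bx}:\bx\in W\}$, shows $Y=\overline{\omega(Z)}^\sigma$ (noting that the shift needed to land in $W$ is bounded by the maximal return time), and then takes topological closures---without explicitly tracking the boundary sets $\Delta$ that you worry about; your extra care there is not wrong, just more scrupulous than the paper chooses to be.
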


\begin{proof}
    Let
    \[
        Y = \left\{ \scConfig^{\Pcal,R}_{\bx}\,\middle|\,
                    \bx\in\generictorus\right\}
        \quad\text{ and }\quad
        Z = \left\{ \scConfig^{\widehat{\Pcal}|_W,\widehat{R}|_W}_{\bx}
               \,\middle|\, \bx\in W\right\},
    \]
    ($\supseteq$).
        Let $\bx\in W$.
        From Lemma~\ref{lem:desubstitute},
        $\omega\left(
         \scConfig^{\widehat{\Pcal}|_W,\widehat{R}|_W}_{\bx}\right)
         = \scConfig^{\Pcal,R}_{\bx}$ with $\bx\in W\subset\generictorus$.
    ($\subseteq$).
    Let $\bx\in\generictorus$. There exist $k_1,k_2\in\N$ such that
    $\bx'=R^{-(k_1,k_2)}(\bx)\in W$. 
    Therefore, we have $\bx=R^{(k_1,k_2)}(\bx')$ where $0\leq k_1<r_{\bx'}^{(1)}(1)$ and
    $0\leq k_2<r_{\bx'}^{(2)}(1)$.

    Thus the shift $\bk=(k_1,k_2)\in\Z^2$ is bounded by the maximal return time
    of $R^{\be_1}$ and $R^{\be_2}$ to $W$. We have
    \begin{align*}
        \scConfig^{\Pcal,R}_{\bx}
        &= \scConfig^{\Pcal,R}_{R^\bk\bx'}
        = \sigma^{\bk} \circ
        \scConfig^{\Pcal,R}_{\bx'}\\
        &= \sigma^{\bk} \circ \omega
        \left(\scConfig^{\widehat{\Pcal}|_W,\widehat{R}|_W}_{\bx'}\right)
    \end{align*}
    where we used
    Lemma~\ref{lem:desubstitute} with $\bx'\in W$.
    We conclude that $Y=\shiftclosure{\omega(Z)}$.
    The result follows from Lemma~\ref{lem:closure-of-tilings}
    by taking the topological closure on both sides.
\end{proof}

\begin{exo}[label={exo:induced-partition-on-triangle}]
    Let $T(\bx) = \bx+(\frac{1}{3},\frac{1}{2})$
    be a $\Z$-action defined on $\torus^2$ as in Exercise~\ref{exo:PET-on-triangle-nontrivial}.
    Let $W\subset\torus^2$ be the triangle with vertices $(0,0)$, $(1,0)$ and
    $(0,1)$.
    Let $\Pcal$ be the polygonal partition of $\torus^2$ into two parts
    separated by the positive diagonal.
    Compute the substitution and the partition $\widehat{\Pcal}|_{W}$ of
    $\Pcal$ induced by the action of $T$ on the sub-domain $W$.
\end{exo}

\subsection{Self-similarity of the subshift $\Xcal_{\Pcal_\Zcal,R_\Zcal}$}
\label{sec:self-similarity-of-X-PZ-RZ}

In this section, we induce the topological partition
$\Pcal_\Zcal$ until the process loops.
We need two induction steps before obtaining a topological partition
which is self-induced.

%\begin{figure}[h]
%    \[
%    \begin{array}{ccc}
%        \sccode_1:\left(\R/\varphi^{-1}\Z\right)\times\torus\to\Zrange{16}\\
%    \includegraphics{SageOutputs/partition_U_Q.pdf}
%    \end{array}
%    \begin{array}{c}
%    ...
%    \end{array}
%    \]
%    \caption{
%    The map $\sccode_1:\left(\R/\varphi^{-1}\Z\right)\times\torus\to\llbracket
%    0,16\rrbracket$ which defines the topological partition 
%    $\{Q_b\}_{b\in \Zrange{16}}$.}
%    \label{fig:partition-Z-half-induced}
%\end{figure}

The proof contains SageMath code 
using the \texttt{slabbe} optional package
\cite{labbe_slabbe_0_7_6_2023}
to reproduce the computation of the induced partitions and
$2$-dimensional morphisms. 
The algorithm of the method 
\texttt{induced\_transformation}
and
\texttt{induced\_partition} are available in 
\cite{MR4347332}.

\begin{proof}[Proof of Theorem~\ref{thm:main-theoremB}]
First, we define the golden mean \texttt{phi} as an element of a number field
defined by a quadratic polynomial which is more efficient when doing
arithmetic operations and comparisons.
We also import the necessary functions.
\begin{sagecommandlinetcb}
\begin{sagecommandline}
sage: z = polygen(QQ, "z")
sage: K.<phi> = NumberField(z**2-z-1, "phi", embedding=RR(1.6))
sage: from slabbe import PolyhedronExchangeTransformation as PET
sage: from slabbe.arXiv_1903_06137 import self_similar_19_atoms_partition
\end{sagecommandline}
\end{sagecommandlinetcb}
The proof uses Proposition~\ref{prop:orbit-preimage} two times to induce both
the vertical and horizontal actions, starting with the vertical action.
We begin with the lattice $\Gamma_0=\Z^2$, the
partition $\Pcal_\Zcal$, the coding map $\sccode_0:\R^2/\Gamma_0\to\Acal_0$, the
alphabet $\Acal_0=\Zrange{15}$ and $\Z^2$-action $R_\Zcal$ defined on
$\torus^2$ as shown below.
\begin{sagecommandlinetcb}
\begin{sagecommandline}
sage: Gamma0 = matrix.column([(1,0), (0,1)])
sage: PU = self_similar_19_atoms_partition()
sage: merge_dict = {0:0, 1:1, 2:2, 3:3, 4:4, 5:5, 6:6, 7:6, 8:7, 9:8, 10:9, 
....:               11:10, 12:11, 13:11, 14:12, 15:12, 16:13, 17:14, 18:15}
sage: PZ = PU.merge_atoms(merge_dict)
sage: RZe1 = PET.toral_translation(Gamma0, vector((phi^-2,0)))
sage: RZe2 = PET.toral_translation(Gamma0, vector((0,phi^-2)))
\end{sagecommandline}
\end{sagecommandlinetcb}
    \[
    \begin{array}{c}
        \Pcal_\Zcal=
    \end{array}
    \begin{array}{c} 
        \sageplot[][pdf]{PZtikz}
    \end{array}
    \quad
    \begin{array}{l}
        R_\Zcal^\bn(\bx)=\bx+\varphi^{-2}\bn
    \end{array}
    \] 

    We consider the window $W_0=(0,1)\times(0,\varphi^{-1})+\Gamma_0$ as a
    subset of $\R^2/\Gamma_0$.
    The action $R_\Zcal$ is Cartesian on $W_0$.
    Thus from Lemma~\ref{lem:Cartesian-induced-action},
    $R_1:=\widehat{R_\Zcal}|_{W_0}:\Z^2\times W_0\to W_0$ is a well-defined
    $\Z^2$-action.
    From Lemma~\ref{lem:PET-iff-toral-rotation},
    the $\Z^2$-action $R_1$ can be seen as a toral rotation on
    $\R^2/\Gamma_1$ with $\Gamma_1=\Z\times(\varphi^{-1}\Z)$, see
    Exercise~\ref{exo:induction-z2-action}.
    Let $\Pcal_1=\widehat{\Pcal_\Zcal}|_{W_0}$ be the induced partition.
    From Proposition~\ref{prop:orbit-preimage}, then
    $\Xcal_{\Pcal_\Zcal,R_\Zcal}=
    \shiftclosure{\beta_0(\Xcal_{\Pcal_1,R_1})}$.
    The partition $\Pcal_1$, the action $R_1$ and substitution
    $\beta_0$ are given below with alphabet $\Acal_1=\Zrange{17}$:
\begin{sagecommandlinetcb}
\begin{sagecommandline}
sage: y_ineq = [phi^-1, 0, -1] # y <= phi^-1 (see Polyhedron?)
sage: P1,beta0 = RZe2.induced_partition(y_ineq,PZ,
....:                                   substitution_type="column")
sage: R1e1,_ = RZe1.induced_transformation(y_ineq)
sage: R1e2,_ = RZe2.induced_transformation(y_ineq)
\end{sagecommandline}
\end{sagecommandlinetcb}
\begin{sagesilent}
axis1 = axis_HV(vertical_axis=[0,-1,-2,-3])
P1tikz = P1.tikz(extra_code=axis1, fontsize=r'\normalsize', scale=5)
\end{sagesilent}
    \[
    \begin{array}{c}\beta_0:\Acal_1\to \Acal_0^{*^2}\\[2mm]
        \left\{
            \setlength{\arraycolsep}{0pt}
            \footnotesize
            \sage{beta0._latex_(ncolumns=5, align='l')}
    \right.
    \end{array}
    \]
    \[
    \begin{array}{c}
        \Pcal_1=
    \end{array}
    \begin{array}{c} 
        \sageplot[][pdf]{P1tikz}
    \end{array}
        \quad
        \begin{array}{l}
            R_1^\bn(\bx)=\bx+ \\
            \quad(\varphi^{-2}n_1,-\varphi^{-3}n_2)
        \end{array}
    \]
    We consider the window
    $W_1=(0,\varphi^{-1})\times(0,\varphi^{-1})+\Gamma_1$ as a subset of
    $\R^2/\Gamma_1$.
    The action $R_1$ is Cartesian on $W_1$.
    Thus from Lemma~\ref{lem:Cartesian-induced-action},
    $R_2:=\widehat{R_1}|_{W_1}:\Z^2\times W_1\to W_1$ is a well-defined
    $\Z^2$-action.
    From Lemma~\ref{lem:PET-iff-toral-rotation},
    the $\Z^2$-action $R_2$ can be seen as a toral rotation on
    $\R^2/\Gamma_2$ with $\Gamma_2=(\varphi^{-1}\Z)\times(\varphi^{-1}\Z)$.
    Let $\Pcal_2=\widehat{\Pcal_1}|_{W_1}$ be the induced partition.
    From Proposition~\ref{prop:orbit-preimage}, then
    $\Xcal_{\Pcal_1,R_1}=
    \shiftclosure{\beta_1(\Xcal_{\Pcal_2,R_2})}$.
    The partition $\Pcal_2$, the action $R_2$ and substitution
    $\beta_1$ are given below with alphabet $\Acal_2=\Zrange{15}$:
\begin{sagecommandlinetcb}
\begin{sagecommandline}
sage: x_ineq = [phi^-1, -1, 0] # x <= phi^-1 (see Polyhedron?)
sage: P2,beta1 = R1e1.induced_partition(x_ineq, P1, substitution_type="row")
sage: R2e1,_ = R1e1.induced_transformation(x_ineq)
sage: R2e2,_ = R1e2.induced_transformation(x_ineq)
\end{sagecommandline}
\end{sagecommandlinetcb}
\begin{sagesilent}
axis2 = axis_HV(horizontal_axis=[0,-1,-2,-3],vertical_axis=[0,-1,-2,-3])
P2tikz = P2.tikz(extra_code=axis2, fontsize=r'\normalsize', scale=5)
\end{sagesilent}
    \[
    \begin{array}{c}\beta_1:\Acal_2\to \Acal_1^{*^2}\\[2mm]
        \left\{
            \setlength{\arraycolsep}{0pt}
            \footnotesize
            \sage{beta1._latex_(ncolumns=4, align='l')}
    \right.
    \end{array}
    \]
\[
    \begin{array}{c}
        \Pcal_2=
    \end{array}
    \begin{array}{c} 
        \sageplot[][pdf]{P2tikz}
    \end{array}
        \quad
    \begin{array}{c}
        R_2^\bn(\bx)=\bx - \varphi^{-3}\bn
    \end{array}
\]
    Now it is appropriate to rescale the partition $\Pcal_2$ by
    the factor $-\varphi$.  
    Doing so, the new obtained action $R'_2$ is the same as two steps before,
    that is, $R_\Zcal$ on $\R^2/\Z^2$. 
    More formally, let $h:(\R/\varphi^{-1}\Z)^2\to(\R/\Z)^2$
be the homeomorphism defined by $h(\bx)=-\varphi\bx$. 
We define 
$\Pcal_2'=h(\Pcal_2)$,
$\sccode_2'=\sccode_2\circ h^{-1}$,
 $(R'_2)^\bn=h\circ (R_2)^\bn\circ h^{-1}$
as shown below:
\begin{sagecommandlinetcb}
\begin{sagecommandline}
sage: P2_scaled = (-phi*P2).translate((1,1))
\end{sagecommandline}
\end{sagecommandlinetcb}
\begin{sagesilent}
axis0 = axis_HV()
P2scaledtikz = P2_scaled.tikz(extra_code=axis0, fontsize=r'\normalsize', scale=5)
\end{sagesilent}
    \[
    \begin{array}{c}
        \Pcal_2'=
    \end{array}
    \begin{array}{c} 
        \sageplot[][pdf]{P2scaledtikz}
    \end{array}
    \quad
    \begin{array}{l}
        (R'_2)^\bn(\bx)=\bx+\varphi^{-2}\bn
    \end{array}
    \] 
We observe that the scaled partition $\Pcal_2'$ is the same as $\Pcal_\Zcal$
up to a permutation $\beta_2$ of the indices of the atoms
in such a way that $\beta_2\circ\sccode_0=\sccode_2'$.
The partition $\Pcal_\Zcal$, the action $R_\Zcal$ and substitution
$\beta_2:\Acal_2\to\Acal_0$ are given below.
\begin{sagecommandlinetcb}
\begin{sagecommandline}
sage: assert P2_scaled.is_equal_up_to_relabeling(PZ)
sage: beta2 = Substitution2d.from_permutation(PZ.keys_permutation(P2_scaled))
\end{sagecommandline}
\end{sagecommandlinetcb}
    \[
    \begin{array}{ll}
        \beta_2:&\Zrange{15}\to\Zrange{15}^{*^2}\\[2mm]
        &\left\{\arraycolsep=1.8pt
                \sage{beta2._latex_(ncolumns=4, align='l')}
        \right.
    \end{array}
    \]
By construction, the following diagrams commute:
\[ 
\begin{tikzcd}
    (\R/\varphi^{-1}\Z)^2 \arrow{r}{h} \arrow[swap]{d}{R_2^\bn}
    & (\R/\Z)^2 \arrow{d}{(R_2')^\bn} \\%
    (\R/\varphi^{-1}\Z)^2 \arrow{r}{h}
    & (\R/\Z)^2
\end{tikzcd}
\qquad
    \text{ and }
\qquad
\begin{tikzcd}
    (\R/\varphi^{-1}\Z)^2 \arrow{r}{h} \arrow[swap]{d}{\sccode_2}
    & (\R/\Z)^2 \arrow[swap,sloped,near start]{ld}{\sccode_2'} \arrow{d}{\sccode_0}\\%
    \Acal_2 
    & \Acal_2 \arrow[swap]{l}{\beta_2} 
\end{tikzcd}.
\]
Using the above commutative properties, for every $\by\in\R^2/\Gamma_2$ and
$\bn\in\Z^2$, we have
\begin{align*}
    \scConfig^{\Pcal_2,R_2}_{\by} (\bn)
    &= \sccode_2(R_2^\bn (\by))
    = \sccode_2' \circ h(R_2^\bn (\by))\\
    &= \beta_2\circ\sccode_0 \circ h(R_2^\bn (\by))
    = \beta_2\circ\sccode_0 \circ (R_2')^\bn (h(\by)))\\
    &= \beta_2\circ\sccode_0 \circ R_\Zcal^\bn (h(\by)))
    = \beta_2\left(\scConfig^{\Pcal_\Zcal,R_\Zcal}_{h(\by)} (\bn)\right).
\end{align*}
    Thus $\Xcal_{\Pcal_2,R_2} =\beta_2\left(\Xcal_{\Pcal_\Zcal,R_\Zcal}\right)$.
We may check that $\beta_0\circ\beta_1\circ\beta_2=\Phi$
where the variable \texttt{Phi} was created in Exercise~\ref{exo:Phi-sagemath}:
\begin{sagecommandlinetcb}
\begin{sagecommandline}
sage: beta0 * beta1 * beta2 == Phi
True
\end{sagecommandline}
\end{sagecommandlinetcb}
\noindent
We conclude that
\begin{align*}
    \Xcal_{\Pcal_\Zcal,R_\Zcal}
    &= \shiftclosure{\beta_0(\Xcal_{\Pcal_1,R_1})}
    = \shiftclosure{\beta_0\beta_1(\Xcal_{\Pcal_2,R_2})}
    = \shiftclosure{\beta_0\beta_1\beta_2\left(\Xcal_{\Pcal_\Zcal,R_\Zcal}\right)}
    = \shiftclosure{\Phi\left(\Xcal_{\Pcal_\Zcal,R_\Zcal}\right)}.
\end{align*}
\end{proof}

\begin{exo}[label={exo:XPZRZ-domino-language}]
    Compute the language of the dominoes and patterns of shape $2\times2$
    within $\Xcal_{\Pcal_\Zcal,R_\Zcal}$:
    \[
        \Lcal_{1\times 2}(\Xcal_{\Pcal_\Zcal,R_\Zcal}),\quad
        \Lcal_{2\times 1}(\Xcal_{\Pcal_\Zcal,R_\Zcal})
        \quad
        \text{ and }
        \quad
        \Lcal_{2\times 2}(\Xcal_{\Pcal_\Zcal,R_\Zcal}).
    \]
\end{exo}

\begin{exo}[label={exo:XPZRZ=XPhi}]
    Using the criterion given in Lemma~\ref{lem:criterion-for-minimality},
    prove that $\Xcal_{\Pcal_\Zcal,R_\Zcal}$ is minimal
    and $\Xcal_{\Pcal_\Zcal,R_\Zcal}=\Xcal_\Phi$.
\end{exo}

\begin{exo}[label={exo:induction-first-horizontal}]
    Prove the self-similarity of $\Xcal_{\Pcal_\Zcal,R_\Zcal}$ 
    by doing the induction first horizontaly with $R_\Zcal^{\be_1}$, 
    and then vertically with $R_\Zcal^{\be_2}$.
    Compare with the result of Exercise~\ref{exo:markers-e1-and-then-e2}.
    See also Exercise~\ref{exo:desubstitute-markers-e1}.
\end{exo}

\section{Conclusion}

We may now conclude with a proof of Theorem~\ref{thm:equality-three-subshifts}
providing three different characterizations of the same aperiodic 2-dimensional
subshift.

\begin{proof}[Proof of Theorem~\ref{thm:equality-three-subshifts}]
In Section~\ref{chap:Labbe:sec:aperiodic-self-similar-subshifts},
we defined the 2-dimensional subshift $\Xcal_\Phi$ from some 2-dimensional
morphism $\Phi$.

In Section~\ref{chap:Labbe:sec:wang-shifts}
we proved that
$\Omega_\Zcal=\shiftclosure{\Phi(\Omega_\Zcal)}$ using the desubstitution of
Wang shifts using marker tiles.
In Exercise~\ref{exo:OmegaU=XPhi}, we proved using the criterion 
given in Lemma~\ref{lem:criterion-for-minimality}, that
$\Omega_\Zcal=\Xcal_\Phi$.

In Section~\ref{chap:Labbe:sec:Z2-rotations}
we proved that
$\Xcal_{\Pcal_\Zcal,R_\Zcal}=\shiftclosure{\Phi(\Xcal_{\Pcal_\Zcal,R_\Zcal})}$ using
induction of $\Z^2$-rotations.
In Exercise~\ref{exo:XPZRZ=XPhi}, we proved using the criterion 
given in Lemma~\ref{lem:criterion-for-minimality}, that
$\Xcal_{\Pcal_\Zcal,R_\Zcal}=\Xcal_\Phi$.
Thus, we have the equality
\[
\Omega_\Zcal
=\Xcal_\Phi
=\Xcal_{\Pcal_\Zcal,R_\Zcal}.\qedhere
\]
\end{proof}

Since $\Omega_\Zcal$ is a SFT,
it implies that
$\Pcal_\Zcal$ is a Markov partition
for the $\Z^2$-action $\R_\Zcal$ on $\torus^2$.
Markov partitions were originally defined for one-dimensional dynamical
systems $(\generictorus,\Z,R)$ and were extended to $\Z^d$-actions by automorphisms of
compact Abelian group in \cite{MR1632169}.
Following \cite{MR4213162},
we use the same terminology 
and extend the definition proposed in \cite[\S 6.5]{MR1369092}
for dynamical systems defined by higher-dimensional actions by rotations.

\begin{definition}\label{def:Markov}
A topological partition $\Pcal$ of $\generictorus$ is a \defn{Markov partition} for
$(\generictorus,\Z^2,R)$ if 
\begin{itemize}
    \item $\Pcal$ gives a symbolic representation of $(\generictorus,\Z^2,R)$ and 
    \item $\Xcal_{\Pcal,R}$ is a shift of finite type (SFT).
\end{itemize}
\end{definition}

The link between the subshifts
$\Omega_\Zcal$ and
$\Xcal_{\Pcal_\Zcal,R_\Zcal}$
can be explained directly without the $2$-dimensional morphism $\Phi$ by the existence of
a factor map from $(\Omega_\Zcal,\Z^2,\sigma)$ to $(\torus^2,\Z^2,R_\Zcal)$.
It turns out that the factor is also an isomorphism of strictly ergodic
measure-preserving dynamical systems.
We refer the reader to \cite{MR4213162} for more details.
Moreover, there exists a 4-to-2 cut and project scheme such that
the set of occurrences of any pattern in $\Omega_\Zcal$
is a model set, see Theorem 14.1 in \cite{MR4213162}.

\begin{exo}[label={exo:Markof-Partition}]
    Prove that the topological partition
$\Pcal_\Zcal$ of $\torus^2$ is a Markov partition for the dynamical system
$(\torus^2,\Z^2,R_\Zcal)$.
\end{exo}

\begin{exo}[label={exo:equality-of-substitutions}]
    Using SageMath,
    verify that the equalities $\beta_0=\alpha_0$, $\beta_1=\alpha_1$
    and $\beta_2=\alpha_2$ hold. 
    %\begin{sagecommandlinetcb}
    %\begin{sagecommandline}
    %sage: beta0 == alpha0
    %sage: beta1 == alpha1
    %sage: beta2 == alpha2
    %\end{sagecommandline}
    %\end{sagecommandlinetcb}
    %\noindent
    As a consequence, what is the relation between $\Omega_\Vcal$ and
    $\Xcal_{\Pcal_1,R_1}$?  What is the relation between $\Omega_\Wcal$ and
    $\Xcal_{\Pcal_2,R_2}$?
\end{exo}

\begin{exo}[label={exo:further-simplification}]
    In her PhD thesis \cite{lepsova_thesis_2024}, Jana Lepšová
    simplified the Wang shift $\Omega_\Ucal$ based on 19 Wang tiles
    to the Wang shift $\Omega_\Zcal$ based on 16 Wang tiles
    and proved that $\Omega_\Ucal$ and $\Omega_\Zcal$ are topologically conjugate.

    Try merging some more labels of the set of Wang tiles $\Zcal$.
    Is the resulting Wang shift still aperiodic?
\end{exo}

Jana did not find any more simplifications of $\Zcal$ during her PhD, but
it is still open to show that there is no further simplification.
More precisely, we may state the following question.

\begin{openquestion}
    Prove that there is no Wang shift based on fewer than 16 tiles
    that is topologically conjugate to $\Omega_\Ucal$ and $\Omega_\Zcal$?
\end{openquestion}

More broadly, it seems 16 is a lower bound for the number of tiles of a self-similar
aperiodic Wang shift. 
The Jeandel--Rao Wang shift based on 11 tiles is substitutive
but is not self-similar \cite{MR4226493}. 
Also, the Kari--Culik Wang shifts \cite{MR1417578,MR1417576} (based on 14 and
13 Wang tiles) have positive entropy thus are not self-similar \cite{MR3606059}.
Another aperiodic and self-similar set of Wang tiles was described by Ammann
and is based on 16 Wang tiles \cite{MR857454}. It was generalized to a family of sets of
aperiodic and self-similar Wang tiles involving the metallic mean numbers
\cite{labbe_metallic_I_2024,labbe_metallic_II_2024}. But 16 tiles is the minimum
number of tiles in this family defined from 16, 25, 36, 49, 64, $\ldots,
(n+3)^2, \ldots$ Wang tiles where $n\geq 1$.

\begin{openquestion}
    Does there exist a self-similar and aperiodic Wang shift
    based on strictly less than 16 tiles?
\end{openquestion}

The Ammann Wang shift based on 16 Wang tiles and the Wang shift $\Omega_\Zcal$
are not equivalent up to a bijection of the labels, but both are related to
golden mean. This raises the following question.

\begin{openquestion}
    Is $\Omega_\Zcal$ topologically conjugate to the Ammann Wang shift?
\end{openquestion}

The above question might be answered by the computation of set of slopes of
nonexpansive directions in the Wang shifts, which is a topological invariant
\cite{MR4730985}.
In \cite{MR4730985}, the slope of the four nonexpansive directions
for the minimal subshift within the Jeandel--Rao Wang shift were computed
from its associated polygonal partition.

\begin{openquestion}
    What are the slopes of nonexpansive directions in the Wang shift $\Omega_\Zcal$?
    in the Ammann Wang shift ?
\end{openquestion}

Another open question raised by the current work is the characterization of
Markov polygonal partition for toral $\Z^2$-rotations.

\begin{openquestion}
    Characterize the polygonal partitions $\Pcal$ of $\torus^2$
    and toral $\Z^2$-action $R$ defined by rotations on $\torus^2$
    for which $\Pcal$ is a Markov partition.
\end{openquestion}

\section{Solutions to exercises}\label{sec:exosolutions}

\begin{exosolution}{\ref{exo:subshift-easy}}
    There are two elements in the subshift $\Xcal_{\Lcal(x)}$.
\end{exosolution}

\begin{exosolution}{\ref{exo:easy-image-by-Phi}}
    We have
    \[
        \Phi
    \left(\begin{smallmatrix}
        (11) \\
    \end{smallmatrix}\right)
    = 
    \left(\begin{smallmatrix}
        6 & 1 \\
        12 & 10
    \end{smallmatrix}\right)
    ,\quad
    \Phi
    \left((\begin{smallmatrix}
        13 & 7 \\
    \end{smallmatrix})\right)
    =
    \left(\begin{smallmatrix}
        5 & 1 & 6 \\
        15 & 9 & 12
    \end{smallmatrix}\right)
    ,\quad
    \Phi
    \left((\begin{smallmatrix}
    6 & 1 \\
    11 & 8
    \end{smallmatrix})\right)
    =
    \left(\begin{smallmatrix}
        12 & 7 & 13 \\
        6 & 1 & 3 \\
        12 & 10 & 14
    \end{smallmatrix}\right).
    \]
    The image $\Phi
    \left((\begin{smallmatrix}
    6 \\
    10 \\
    \end{smallmatrix})\right)$
    is not defined because
    the image of 6 has width 2 while the image of 10 has width 1.
\end{exosolution}

\begin{exosolution}{\ref{exo:image-of-subshift-is-a-subshift}}
    Left to the reader.
\end{exosolution}

\begin{exosolution}{\ref{exo:prove-expansive}}
    For every letter $a\in\Zrange{15}$, the 2-dimensional
    word $\Phi^2(a)$ has height and width at least 2.
\end{exosolution}

\begin{exosolution}{\ref{exo:substitutive-shift-nonempty}}
The configurations
$x = \lim_{n\to\infty}\Phi^{2n}\left(\begin{smallmatrix}8&12\\1&6\end{smallmatrix}\right)$
    and
$y = \lim_{n\to\infty}\Phi^{2n+1}\left(\begin{smallmatrix}8&12\\1&6\end{smallmatrix}\right)$
both belong to $\Xcal_\Phi$. Therefore, $\Xcal_\Phi\neq\varnothing$.
\end{exosolution}

\begin{exosolution}{\ref{exo:Phi-sagemath}}
The following computes
$\Phi^{n}\left(\begin{smallmatrix}12\end{smallmatrix}\right)$
and
$\Phi^{n}\left(\begin{smallmatrix}8&12\\1&6\end{smallmatrix}\right)$
when $n=4$.
\begin{sagecommandlinetcb}
\begin{sagecommandline}
sage: image = Phi([[12]], order=4)
sage: image
[[12, 6, 12, 11, 6, 12, 6, 12],
[10, 1, 7, 8, 1, 10, 1, 7],
[14, 3, 13, 12, 6, 14, 3, 13],
[11, 6, 12, 15, 5, 11, 6, 12],
[8, 1, 10, 9, 1, 8, 1, 7],
[12, 6, 14, 12, 6, 12, 2, 13],
[10, 1, 7, 10, 1, 7, 0, 7],
[14, 3, 13, 14, 3, 13, 3, 13]]
sage: seed = [[1,8],[6,12]]  # using Cartesian-like coordinates
sage: image = Phi(seed, order=4)
\end{sagecommandline}
\end{sagecommandlinetcb}
The reader may increase the power to $n=5$ or $n=6$
and compare the output with Figure~\ref{fig:Phi-on-seed-12} and
Figure~\ref{fig:Phi-on-seed-8-12-1-6}.
\end{exosolution}

\begin{exosolution}{\ref{exo:HV-dominoes}}
The method \texttt{list\_dominoes} returns the list of $1\times 2$ or $2\times
1$ factors in the language of the associated substitutive shift:
\begin{sagecommandlinetcb}
\begin{sagecommandline}
sage: XPhi_2x1 = Phi.list_dominoes(direction='horizontal')
sage: sorted(XPhi_2x1)
[[[0], [3]], [[1], [2]], [[1], [3]], [[1], [6]], [[2], [0]], [[2], [4]],
[[3], [6]], [[4], [1]], [[5], [1]], [[6], [1]], [[6], [5]], [[7], [13]],
[[8], [12]], [[9], [11]], [[9], [12]], [[10], [14]], [[11], [8]], [[12],
[7]], [[12], [10]], [[12], [11]], [[12], [15]], [[13], [7]], [[13], [11]],
[[13], [12]], [[14], [7]], [[14], [11]], [[15], [9]]]
sage: XPhi_1x2 = Phi.list_dominoes(direction='vertical')
sage: sorted(XPhi_1x2)
[[[0, 7]], [[1, 7]], [[1, 8]], [[1, 10]], [[2, 13]], [[3, 13]], [[4, 11]],
[[5, 11]], [[6, 12]], [[6, 14]], [[7, 0]], [[7, 8]], [[7, 10]], [[8, 1]],
[[8, 9]], [[9, 1]], [[10, 1]], [[10, 9]], [[11, 4]], [[11, 6]], [[11, 15]],
[[12, 2]], [[12, 6]], [[12, 11]], [[12, 15]], [[13, 3]], [[13, 12]], [[13,
14]], [[14, 3]], [[14, 12]], [[15, 5]]]
\end{sagecommandline}
\end{sagecommandlinetcb}
\end{exosolution}

\begin{exosolution}{\ref{exo:belongs-in-the-set}}
    The letter $9$ appears only in the image of $13$ by $\Phi$.
    Thus
    $\left(\begin{smallmatrix}1\\9\end{smallmatrix}\right)$
    must be obtained from an image of the letter $13$.
    The vertical domino $\left(\begin{smallmatrix}2\\12\end{smallmatrix}\right)$
    must be obtained from the image of $10$ or of $15$.
    Therefore
    $\left(\begin{smallmatrix}1 & 2\\9 & 12\end{smallmatrix}\right)$
        is obtained from the horizontal dominoes
    $\left(\begin{smallmatrix}13 & 10\end{smallmatrix}\right)$
        or
    $\left(\begin{smallmatrix}13 & 15\end{smallmatrix}\right)$.
        But, from Exercise~\ref{exo:HV-dominoes}, the only horizontal dominoes
        starting with $13$ in the language of $\Phi$ are
    $\left(\begin{smallmatrix}13 & 7\end{smallmatrix}\right)$,
    $\left(\begin{smallmatrix}13 & 11\end{smallmatrix}\right)$ or
    $\left(\begin{smallmatrix}13 & 12\end{smallmatrix}\right)$.
        Therefore
    $\left(\begin{smallmatrix}1 & 2\\9 & 12\end{smallmatrix}\right)\notin\Lcal_\Phi$.
\end{exosolution}

\begin{exosolution}{\ref{exo:list-45-2x2-elements-in-substitutive-shift}}
\begin{sagecommandlinetcb}
\begin{sagecommandline}
sage: sorted(Phi.list_2x2_factors())
[[[0, 7], [3, 13]], [[1, 7], [2, 13]], [[1, 7], [3, 13]], [[1, 8], [6, 12]], [[1, 10], [6, 14]], [[2, 13], [0, 7]], [[2, 13], [4, 11]], [[3, 13], [6, 12]], [[4, 11], [1, 8]], [[5, 11], [1, 8]], [[6, 12], [1, 7]], [[6, 12], [1, 10]], [[6, 12], [5, 11]], [[6, 14], [1, 7]], [[6, 14], [5, 11]], [[7, 0], [13, 3]], [[7, 8], [13, 12]], [[7, 10], [13, 14]], [[8, 1], [12, 2]], [[8, 1], [12, 6]], [[8, 9], [12, 11]], [[9, 1], [11, 6]], [[9, 1], [12, 6]], [[10, 1], [14, 3]], [[10, 9], [14, 12]], [[11, 4], [8, 1]], [[11, 6], [8, 1]], [[11, 15], [8, 9]], [[12, 2], [7, 0]], [[12, 2], [11, 4]], [[12, 6], [10, 1]], [[12, 6], [15, 5]], [[12, 11], [7, 8]], [[12, 15], [10, 9]], [[13, 3], [11, 6]], [[13, 3], [12, 6]], [[13, 12], [7, 10]], [[13, 12], [11, 15]], [[13, 12], [12, 11]], [[13, 12], [12, 15]], [[13, 14], [12, 11]], [[14, 3], [11, 6]], [[14, 12], [7, 10]], [[14, 12], [11, 15]], [[15, 5], [9, 1]]]
\end{sagecommandline}
\end{sagecommandlinetcb}
Note that the $2\times2$ words above are written in Cartesian coordinates.
    For example, the list of lists \texttt{[[0, 7], [3, 13]]}
    denotes the 2-dimensional word
    $\left(\begin{smallmatrix}7 & 13\\ 0 & 3\end{smallmatrix}\right)$.
\end{exosolution}

\begin{exosolution}{\ref{exo:substitutive-shift-8-periodic-points}}
There are 96 periodic points of $\Phi$, i.e. the configurations
$x\in\Acal^{\Z^2}$ such that $\Phi^k(x)=x$ for some $k\geq1$.
\begin{sagecommandlinetcb}
\begin{sagecommandline}
sage: seeds = sorted(flatten(Phi.prolongable_seeds_list()))
sage: len(seeds)
96
sage: seeds[0]
[ 8 11]
[ 1  4]
\end{sagecommandline}
\end{sagecommandlinetcb}
These seeds do not all belong to the language of the substitution.
If all seeds belong to the language of the substitution (computed
from the letters), then there exists a unique subshift which is
self-similar with respect to the substitution. In this example, it
is not true:
\begin{sagecommandlinetcb}
\begin{sagecommandline}
sage: seeds_as_lists_of_lists = [[list(col[::-1]) for col in m.columns()] for m in seeds]
sage: XPhi_2x2 = Phi.list_2x2_factors()
sage: all(seed in XPhi_2x2 for seed in seeds_as_lists_of_lists)
False
\end{sagecommandline}
\end{sagecommandlinetcb}
Indeed, only 8 of the 96 seeds belong to the language of the
substitution:
\begin{sagecommandlinetcb}
\begin{sagecommandline}
sage: [seed for seed in seeds_as_lists_of_lists if seed in XPhi_2x2]
[[[1, 8], [6, 12]],
[[7, 8], [13, 12]],
[[8, 9], [12, 11]],
[[10, 9], [14, 12]],
[[2, 13], [4, 11]],
[[3, 13], [6, 12]],
[[6, 14], [5, 11]],
[[13, 14], [12, 11]]]
\end{sagecommandline}
\end{sagecommandlinetcb}
These eight seeds 
\[
    \left\{
        \left(\begin{smallmatrix} 8 & 12  \\ 1 & 6   \end{smallmatrix}\right), 
        \left(\begin{smallmatrix} 8 & 12  \\ 7 & 13  \end{smallmatrix}\right),  
        \left(\begin{smallmatrix} 9 & 11  \\ 8 & 12  \end{smallmatrix}\right),  
        \left(\begin{smallmatrix} 9 & 12  \\ 10 & 14 \end{smallmatrix}\right), 
        \left(\begin{smallmatrix} 13 & 11 \\ 2 & 4   \end{smallmatrix}\right),  
        \left(\begin{smallmatrix} 13 & 12 \\ 3 & 6   \end{smallmatrix}\right),  
        \left(\begin{smallmatrix} 14 & 11 \\ 6 & 5   \end{smallmatrix}\right),  
        \left(\begin{smallmatrix} 14 & 11 \\ 13 & 12 \end{smallmatrix}\right)
    \right\}
\]
give rise to eight different configurations which are fixed by some power of
the substitution and are uniformly recurrent.
\end{exosolution}

\begin{exosolution}{\ref{exo:representations}}
    Left to the reader.
\end{exosolution}

\begin{exosolution}{\ref{exo:more-one-centered-representations}}
    Left to the reader.
\end{exosolution}

\begin{exosolution}{\ref{exo:prove-recognizable-aperiodic}}
    (1)
    The notion of markers is developed in Section~\ref{subsection:markers} to
    help prove that a substitution is recognizable.
    It can be used here for $\Phi$
    using the decomposition of $\Phi$ as product of simpler one-dimensional substitutions
    computed in Section~\ref{chap:Labbe:sec:wang-shifts}
    and in Section~\ref{chap:Labbe:sec:Z2-rotations}.

    (2)
    Aperiodicity of $\Xcal_\Phi$ follows from
    the recognizability of $\Phi$ in $\Xcal_\Phi$.
\end{exosolution}

\begin{exosolution}{\ref{exo:prove-primitive-minimal}}
The $2$-dimensional substitution $\Phi$ is primitive, because its incidence
matrix is primitive:
\begin{sagecommandlinetcb}
\begin{sagecommandline}
sage: matrix(Phi).is_primitive()
True
\end{sagecommandline}
\end{sagecommandlinetcb}
From Exercise~\ref{exo:prove-expansive} and
Lemma~\ref{lem:xcal_omega_minimal}, $\Xcal_\Phi$ is minimal.
\end{exosolution}

\begin{exosolution}{\ref{exo:recurrent-vertices}}
    The horizontal dominoes in $\RecurrentVertices(G^{2\times 1}_\Phi)$ are
\begin{sagecommandlinetcb}
\begin{sagecommandline}
sage: Gh = Phi.periodic_horizontal_domino_seeds_graph(clean_sources=True)
sage: Gh
Looped digraph on 36 vertices
sage: sorted(Gh.vertices())
[(1, 2), (1, 4), (1, 5), (1, 6), (2, 4), (2, 5), (2, 6), (3, 2), (3, 4), (3, 5), (3, 6), (6, 4), (6, 5), (6, 6), (7, 12), (7, 13), (7, 14), (8, 11), (8, 12), (8, 13), (8, 14), (9, 11), (9, 12), (9, 15), (10, 12), (10, 13), (10, 14), (12, 11), (12, 15), (13, 11), (13, 12), (13, 13), (13, 14), (14, 11), (14, 12), (14, 15)]
\end{sagecommandline}
\end{sagecommandlinetcb}
    The vertical dominoes in $\RecurrentVertices(G^{1\times 2}_\Phi)$ are
\begin{sagecommandlinetcb}
\begin{sagecommandline}
sage: Gv = Phi.periodic_vertical_domino_seeds_graph(clean_sources=True)
sage: Gv
Looped digraph on 36 vertices
sage: sorted(Gv.vertices())
[(1, 7), (1, 8), (1, 9), (1, 10), (2, 13), (2, 14), (3, 13), (3, 14), (4, 11), (4, 12), (5, 11), (5, 12), (5, 15), (6, 11), (6, 12), (6, 13), (6, 14), (7, 7), (7, 8), (7, 9), (7, 10), (8, 8), (8, 9), (10, 8), (10, 9), (11, 11), (11, 15), (12, 11), (12, 12), (12, 15), (13, 11), (13, 12), (13, 13), (13, 14), (14, 11), (14, 12)]
\end{sagecommandline}
\end{sagecommandlinetcb}
    There are 96 different $2\times 2$ patterns in
    $\RecurrentVertices(G^{2\times 2}_\Phi)$. The first of them is shown below:
\begin{sagecommandlinetcb}
\begin{sagecommandline}
sage: G = Phi.prolongable_seeds_graph(clean_sources=True)
sage: G
Looped digraph on 96 vertices
sage: sorted(G.vertices())[0]
[ 8 11]
[ 1  4]
\end{sagecommandline}
\end{sagecommandlinetcb}
The limit $x=\lim_{n\to\infty}\Phi^{2n}
\left(\begin{smallmatrix}8&11\\1&4\end{smallmatrix}\right)$
defines a configuration of $\Z^2$ which is fixed by
the $2$-dimensional morphism $\Phi^2$.
The topological closure 
    $X_x=\topologicalclosure{\{\sigma^\bn(x)\colon \bn\in\Z^2\}}$
    of the orbit under the $\Z^2$-shift $\sigma$ of the configuration $x$
is a subshift which is self-similar with respect to $\Phi$.
From Lemma~\ref{lem:substitutive-contains-self-similar-part}, we have
$\Xcal_\Phi\subseteq X_x$. 
    But $\Xcal_\Phi\neq X_x$, because $x\notin\Xcal_\Phi$.
    Indeed the central pattern
$\left(\begin{smallmatrix}8&11\\1&4\end{smallmatrix}\right)$
    of the configuration $x$
    is not in the language of $\Xcal_\Phi$, see
Exercise~\ref{exo:substitutive-shift-8-periodic-points}.
Also the configuration $x$ is not uniformly recurrent because
it contains only one occurrence of the pattern
$\left(\begin{smallmatrix}8&11\\1&4\end{smallmatrix}\right)$.
\end{exosolution}

\begin{exosolution}{\ref{exo:marker-e2-in-XPhi}}
    Let $\Acal=\Zrange{15}$ and $M=\{0,1,2,3,4,5,6\}\subset\Acal$.
    We have $H\cap((\Acal\setminus M)\odot^1 M)=\varnothing$
    and     $H\cap( M\odot^1(\Acal\setminus M))=\varnothing$.
    Also $M\odot^2M\cap V=\varnothing$.
    Therefore, using Lemma~\ref{lem:criteria-markers},
    $M$ is a subset of markers for the direction $\be_2$ in
    the subshift $\Xcal_\Phi$.
\end{exosolution}

\begin{exosolution}{\ref{exo:marker-e1-in-XPhi}}
    Let $\Acal=\Zrange{15}$ and $M=\{0, 1, 7, 8, 9, 10\}\subset\Acal$.
    We have $V\cap((\Acal\setminus M)\odot^2 M)=\varnothing$
    and     $V\cap( M\odot^2(\Acal\setminus M))=\varnothing$.
    Also $M\odot^1M\cap H=\varnothing$.
    Therefore, using Lemma~\ref{lem:criteria-markers},
    $M$ is a subset of markers for the direction $\be_1$ in
    the subshift $\Xcal_\Phi$.
\end{exosolution}

\begin{exosolution}{\ref{exo:markers-find-a-subshift}}
    Such a substitution can be found in
    the proof of Theorem~\ref{thm:main-theoremA}
    in Section~\ref{subsection:self-similarity-wang-shift}.
\end{exosolution}

\begin{exosolution}{\ref{exo:desubstitute-markers-e1}}
The following 2-dimensional morphism $\xi:\Ccal\to\Zrange{15}^{*^2}$,
where $\Ccal=\Zrange{15}$:
    \[
        \xi:\left\{
            {\arraycolsep=1.4pt
            \begin{array}{lllll}
                0\mapsto \left(2\right)
                ,&
                1\mapsto \left(3\right)
                ,&
                2\mapsto \left(6\right)
                ,&
                3\mapsto \left(12\right)
                ,&
                4\mapsto \left(13\right)
                ,\\
                5\mapsto \left(14\right)
                ,&
                6\mapsto \left(2,\,0\right)
                ,&
                7\mapsto \left(4,\,1\right)
                ,&
                8\mapsto \left(5,\,1\right)
                ,&
                9\mapsto \left(6,\,1\right)
                ,\\
                10\mapsto \left(11,\,8\right)
                ,&
                11\mapsto \left(12,\,7\right)
                ,&
                12\mapsto \left(12,\,10\right)
                ,&
                13\mapsto \left(13,\,7\right)
                ,&
                14\mapsto \left(14,\,7\right)
                ,\\
                15\mapsto \left(15,\,9\right)
                .
            \end{array}
            }
    \right.
    \] satisfies the requirements
    with $M=\{0, 1, 7, 8, 9, 10\}$.
% \begin{sagecommandlinetcb}
% \begin{sagecommandline}
% sage: Z.find_markers(i=1, radius=1)
% [[0, 1, 7, 8, 9, 10]]
% sage: M = [0, 1, 7, 8, 9, 10]
% sage: _,xi = Z.find_substitution(M=M, i=1, radius=1)
% sage: xi
% Substitution 2d: {0: [[2]], 1: [[3]], 2: [[6]], 3: [[12]], 4: [[13]], 5: [[14]], 6: [[2], [0]], 7: [[4], [1]], 8: [[5], [1]], 9: [[6], [1]], 10: [[11], [8]], 11: [[12], [7]], 12: [[12], [10]], 13: [[13], [7]], 14: [[14], [7]], 15: [[15], [9]]}
% \end{sagecommandline}
% \end{sagecommandlinetcb}
\end{exosolution}

\begin{exosolution}{\ref{exo:valid-7x7}}
First we define the set $\Zcal$ of Wang tiles in SageMath:
\begin{sagecommandlinetcb}
\begin{sagecommandline}
sage: from slabbe import WangTileSet
sage: tiles = ["DOJO", "DOHL", "JMDP", "DMDK", "HPJP", "HPHN", "HKDP", "BOIO", 
....:   "ILEO", "ILCL", "ALIO", "EPIP", "IPIK", "IKBM", "IKAK", "CNIP"]
sage: Z = WangTileSet([tuple(tile) for tile in tiles])
\end{sagecommandline}
\end{sagecommandlinetcb}
Then using the dancing links solver:
\begin{sagecommandlinetcb}
\begin{sagecommandline}
sage: tiling = Z.solver(7,7).solve(solver="dancing_links")
sage: tiling.table()    # Cartesian-like coordinates
[[0, 7, 8, 1, 8, 9, 1],
 [2, 13, 12, 6, 12, 11, 6],
 [0, 7, 10, 1, 7, 8, 1],
 [3, 13, 14, 3, 13, 12, 2],
 [6, 12, 11, 6, 12, 11, 4],
 [1, 7, 8, 1, 7, 8, 1],
 [2, 13, 12, 2, 13, 12, 2]]
\end{sagecommandline}
\end{sagecommandlinetcb}
\end{exosolution}

\begin{exosolution}{\ref{exo:HVdominoes-in-OmegaU}}
\begin{sagecommandlinetcb}
\begin{sagecommandline}
sage: Z_2x1 = [t.table() for t in Z.tilings_with_surrounding(2,1,radius=2, solver="dancing_links")]
sage: sorted(Z_2x1)
[[[0], [3]], [[1], [2]], [[1], [3]], [[1], [6]], [[2], [0]], [[2], [4]], [[3], [6]], [[4], [1]], [[5], [1]], [[6], [1]], [[6], [5]], [[7], [13]], [[8], [12]], [[9], [11]], [[9], [12]], [[10], [14]], [[11], [8]], [[12], [7]], [[12], [10]], [[12], [11]], [[12], [15]], [[13], [7]], [[13], [11]], [[13], [12]], [[14], [7]], [[14], [11]], [[15], [9]]]
sage: Z_1x2 = [t.table() for t in Z.tilings_with_surrounding(1,2,radius=2, solver="dancing_links")]
sage: sorted(Z_1x2)
[[[0, 7]], [[1, 7]], [[1, 8]], [[1, 10]], [[2, 13]], [[3, 13]], [[4, 11]], [[5, 11]], [[6, 12]], [[6, 14]], [[7, 0]], [[7, 8]], [[7, 10]], [[8, 1]], [[8, 9]], [[9, 1]], [[10, 1]], [[10, 9]], [[11, 4]], [[11, 6]], [[11, 15]], [[12, 2]], [[12, 6]], [[12, 11]], [[12, 15]], [[13, 3]], [[13, 12]], [[13, 14]], [[14, 3]], [[14, 12]], [[15, 5]]]
\end{sagecommandline}
\end{sagecommandlinetcb}
\end{exosolution}
%sage: Z_2x1 = Z.dominoes_with_surrounding(i=1, radius=2, solver="dancing_links")
%sage: sorted(Z_2x1)
%sage: Z_1x2 = Z.dominoes_with_surrounding(i=2, radius=2, solver="dancing_links")
%sage: sorted(Z_1x2)

\begin{exosolution}{\ref{exo:markers-for-Zcal}}
\begin{sagecommandlinetcb}
\begin{sagecommandline}
sage: Z.find_markers(i=2, radius=2, solver="dancing_links")
[[0, 1, 2, 3, 4, 5, 6]]
\end{sagecommandline}
\end{sagecommandlinetcb}
\end{exosolution}

\begin{exosolution}{\ref{exo:fusion-of-tiles}}
We compute the set of fusion tiles:
\begin{sagecommandlinetcb}
\begin{sagecommandline}
sage: M = [0, 1, 2, 3, 4, 5, 6]
sage: dominoes_M = sorted((a,b) for [[a,b]] in Z_1x2 if b in M)
sage: from slabbe.wang_tiles import fusion
sage: new_tiles = [fusion(Z[a],Z[b],2) for (a,b) in dominoes_M]
sage: new_tiles
[('BD', 'O', 'IJ', 'O'), ('ID', 'O', 'EH', 'O'), ('ID', 'O', 'CH', 'L'), ('AD', 'O', 'IH', 'O'), ('EH', 'P', 'IJ', 'P'), ('EH', 'K', 'ID', 'P'), ('IJ', 'M', 'ID', 'K'), ('IH', 'K', 'ID', 'K'), ('ID', 'M', 'BD', 'M'), ('ID', 'M', 'AD', 'K'), ('CH', 'P', 'IH', 'P')]
sage: tikz = WangTileSet(new_tiles).tikz(id=None)
\end{sagecommandline}
\end{sagecommandlinetcb}
These tiles represent the result of merging every marker tile with tiles that
may appear below of them:
\begin{center}
    \sageplot[width=.75\linewidth][pdf]{tikz}
\end{center}
\end{exosolution}

\begin{exosolution}{\ref{exo:FindSubstitution}}
\begin{sagecommandlinetcb}
\begin{sagecommandline}
sage: M = [0, 1, 2, 3, 4, 5, 6]
sage: V,alpha0 = Z.find_substitution(M=M,i=2,radius=2,solver="dancing_links")
sage: V
Wang tile set of cardinality 18
\end{sagecommandline}
\end{sagecommandlinetcb}
    \[
        \alpha_0:
        \left\{
            \footnotesize
        \sage{alpha0}
        \right.
    \]
\[
    \Vcal=
    \left\{
        \raisebox{-10mm}{
            \sageplot[width=.75\linewidth][pdf]{V.tikz()}
        }
    \right\}
\]
\end{exosolution}

    \begin{exosolution}{\ref{exo:OmegaU=XPhi}}
        In Theorem~\ref{thm:main-theoremA}, we proved that the Wang shift
        $\Omega_\Zcal\subset\Zrange{15}^{\Z^2}$ is self-similar satisfying
        $\Omega_\Zcal=\shiftclosure{\Phi(\Omega_\Zcal)}$.
        Thus, we may use the criterion given in Lemma~\ref{lem:criterion-for-minimality}.
        More precisely, in this case, we may show that
        $\Lcal_s(\Omega_\Zcal)\subset\Lcal(\Xcal_\Phi)$ for every shape $s\in\{2\times2,
        2\times1, 1\times2\}$.
        This of course implies that
        $\Lcal(\Omega_\Zcal)\cap\RecurrentVertices(G^s_\Phi)\subset\Lcal(\Xcal_\Phi)$
              for every shape $s\in\{2\times2, 2\times1, 1\times2\}$.

        In Exercise~\ref{exo:HV-dominoes} and
        Exercise~\ref{exo:HVdominoes-in-OmegaU}, we observed that
        $\Lcal_{1\times 2}(\Omega_\Zcal)=\Lcal_{1\times 2}(\Xcal_\Phi)$
        and
        $\Lcal_{2\times 1}(\Omega_\Zcal)=\Lcal_{2\times 1}(\Xcal_\Phi)$:
\begin{sagecommandlinetcb}
\begin{sagecommandline}
sage: sorted(XPhi_2x1) == sorted(Z_2x1)
True
sage: sorted(XPhi_1x2) == sorted(Z_1x2)
True
\end{sagecommandline}
\end{sagecommandlinetcb}
In Exercise~\ref{exo:list-45-2x2-elements-in-substitutive-shift}, we computed
the patterns of shape $2\times 2$ in the language of the 2-dimensional
substitution $\Phi$.
We compute the set $\Lcal_{2\times 2}(\Omega_\Zcal)$
below and we observe that
$\Lcal_{2\times 2}(\Omega_\Zcal)=\Lcal_{2\times 2}(\Xcal_\Phi)$.
\begin{sagecommandlinetcb}
\begin{sagecommandline}
sage: tilings = Z.tilings_with_surrounding(2,2,radius=2, solver="dancing_links")
sage: Z_2x2 = [tiling.table() for tiling in tilings]
sage: len(Z_2x2)
45
sage: len(XPhi_2x2)
45
sage: sorted(XPhi_2x2) == sorted(Z_2x2)
True
\end{sagecommandline}
\end{sagecommandlinetcb}
We conclude from Lemma~\ref{lem:criterion-for-minimality},
that $\Omega_\Zcal$ is minimal and $\Omega_\Zcal=\Xcal_\Phi$.
\end{exosolution}

    \begin{exosolution}{\ref{exo:markers-e1-and-then-e2}}
        Left to the reader.
    \end{exosolution}

\begin{exosolution}{\ref{exo:merge-the-atoms}}
\begin{sagecommandlinetcb}
\begin{sagecommandline}
sage: from slabbe.arXiv_1903_06137 import self_similar_19_atoms_partition
sage: PU = self_similar_19_atoms_partition()
sage: merge_dict = {0:0, 1:1, 2:2, 3:3, 4:4, 5:5, 6:6, 7:6, 8:7, 9:8, 10:9, 
....:               11:10, 12:11, 13:11, 14:12, 15:12, 16:13, 17:14, 18:15}
sage: PZ = PU.merge_atoms(merge_dict)
sage: graphics_array([PU.plot(), PZ.plot()])
Graphics Array of size 1 x 2
\end{sagecommandline}
\end{sagecommandlinetcb}
\end{exosolution}

\begin{exosolution}{\ref{exo:word-shape-68-8-10}}
\begin{sagecommandlinetcb}
\begin{sagecommandline}
sage: z = polygen(QQ, "z")
sage: K.<phi> = NumberField(z**2-z-1, "phi", embedding=RR(1.6))
sage: from slabbe import PolyhedronExchangeTransformation as PET
sage: Gamma0 = matrix.column([(1,0), (0,1)])
sage: RZe1 = PET.toral_translation(Gamma0, vector((phi^-2,0)))
sage: RZe2 = PET.toral_translation(Gamma0, vector((0,phi^-2)))
sage: from slabbe.arXiv_1903_06137 import self_similar_19_atoms_partition
sage: PU = self_similar_19_atoms_partition()
sage: merge_dict = {0:0, 1:1, 2:2, 3:3, 4:4, 5:5, 6:6, 7:6, 8:7, 9:8, 10:9, 
....:               11:10, 12:11, 13:11, 14:12, 15:12, 16:13, 17:14, 18:15}
sage: PZ = PU.merge_atoms(merge_dict)
sage: from slabbe import PETsCoding
sage: X_PZ_RZ = PETsCoding((RZe1,RZe2), PZ)
sage: pattern = X_PZ_RZ.pattern((.1357+1/phi, .2938+1/phi), (8,10))
sage: m8_10 = matrix.column(col[::-1] for col in pattern)
\end{sagecommandline}
\end{sagecommandlinetcb}
The pattern of shape $8\times 10$ is
    \[
        \sage{m8_10}
    \]
\end{exosolution}

\begin{exosolution}{\ref{exo:PETs-easy}}
    Left to the reader.
\end{exosolution}

\begin{exosolution}{\ref{exo:symbolic-representation}}
    The dynamical system $(\torus^2,\Z^2,R_\Zcal)$ is minimal.
    Every atom of the partition $\Pcal_\Zcal$ is invariant only under the trivial
    translation in $\torus^2$.
    Thus, from Lemma~\ref{lem:symbolic-representation},
    $\Pcal_\Zcal$ gives a symbolic representation of $(\torus^2,\Z^2,R_\Zcal)$.
\end{exosolution}

\begin{exosolution}{\ref{exo:symbolic-ds-minimal-aperiodic}}
    It follows from Lemma~\ref{lem:minimal-aperiodic}.
\end{exosolution}

\begin{exosolution}{\ref{exo:PET-on-triangle-nontrivial}}
    We compute the induced map $\widehat{T}|_W$ in SageMath:
\begin{sagecommandlinetcb}
\begin{sagecommandline}
sage: from slabbe import PolyhedronExchangeTransformation as PET
sage: lattice_base = matrix.column([(1,0), (0,1)])
sage: translation = vector((1/3, 1/2))
sage: T = PET.toral_translation(lattice_base, translation)
sage: ieq = [1, -1, -1]     # inequality 0 <= 1 - x - y, that is, x + y <= 1
sage: induced_map,substitution = T.induced_transformation(ieq)
sage: substitution
{0: [0],
 1: [1],
 2: [2],
 3: [0, 1],
 4: [0, 3],
 5: [0, 1, 2],
 6: [0, 1, 2, 1],
 7: [0, 1, 2, 1, 0, 3]}
sage: induced_map.partition().plot()
Graphics object consisting of 53 graphics primitives
sage: induced_map.image_partition().plot()
Graphics object consisting of 53 graphics primitives
\end{sagecommandline}
\end{sagecommandlinetcb}
    \begin{center}
        \begin{tabular}{cc}
            domain partition & image partition\\
        \sageplot[width=.45\linewidth][pdf]{induced_map.partition().plot()}
        & \sageplot[width=.45\linewidth][pdf]{induced_map.image_partition().plot()}
        \end{tabular}
    \end{center}
    %\begin{center}
    %    \sageplot[width=.45\linewidth][pdf]{induced_map.partition().tikz()}
    %    \sageplot[width=.45\linewidth][pdf]{induced_map.image_partition().tikz()}
    %\end{center}
\end{exosolution}

\begin{exosolution}{\ref{exo:induction-z2-action}}
    The first two questions are left to the reader.
    We compute the induced maps in SageMath:
\begin{sagecommandlinetcb}
\begin{sagecommandline}
sage: z = polygen(QQ, "z")
sage: K.<phi> = NumberField(z**2-z-1, "phi", embedding=RR(1.6))
sage: from slabbe import PolyhedronExchangeTransformation as PET
sage: Gamma0 = matrix.column([(1,0), (0,1)])
sage: RZe1 = PET.toral_translation(Gamma0, vector((phi^-2,0)))
sage: RZe2 = PET.toral_translation(Gamma0, vector((0,phi^-2)))
sage: y_ineq = [phi^-1, 0, -1] # y <= phi^-1: the window W_0
sage: R1e1,_ = RZe1.induced_transformation(y_ineq)
sage: R1e2,_ = RZe2.induced_transformation(y_ineq)
sage: R1e1
Polyhedron Exchange Transformation of
Polyhedron partition of 2 atoms with 2 letters
with translations {0: (-phi + 2, 0), 1: (-phi + 1, 0)}
sage: R1e2
Polyhedron Exchange Transformation of
Polyhedron partition of 2 atoms with 2 letters
with translations {0: (0, -phi + 2), 1: (0, -2*phi + 3)}
\end{sagecommandline}
\end{sagecommandlinetcb}
    \begin{center}
        \begin{tabular}{cc}
            \texttt{R1e1.plot()} & \texttt{R1e2.plot()}\\
        \sageplot[width=.45\linewidth][pdf]{R1e1.plot()}
        &
        \sageplot[width=.45\linewidth][pdf]{R1e2.plot()}
        \end{tabular}
    \end{center}
\end{exosolution}

\begin{exosolution}{\ref{exo:induced-partition-on-triangle}}
    We compute the induced substitution and the induced partition
    $\widehat{\Pcal}|_W$ in SageMath:
\begin{sagecommandlinetcb}
\begin{sagecommandline}
sage: from slabbe import PolyhedronExchangeTransformation as PET
sage: from slabbe import PolyhedronPartition
sage: lattice_base = matrix.column([(1,0), (0,1)])
sage: translation = vector((1/3, 1/2))
sage: T = PET.toral_translation(lattice_base, translation)
sage: square = polytopes.hypercube(2, intervals='zero_one')
sage: P = PolyhedronPartition([square]).refine_by_hyperplane([0,1,-1])
sage: ieq = [1, -1, -1]     # inequality 0 <= 1 - x - y, that is, x + y <= 1
sage: induced_partition,substitution = T.induced_partition(ieq, P)
sage: substitution
{0: [0],
 1: [1],
 2: [0, 0],
 3: [0, 1],
 4: [1, 1],
 5: [0, 0, 0],
 6: [0, 1, 0],
 7: [1, 1, 0],
 8: [0, 1, 0, 1],
 9: [1, 1, 0, 1],
 10: [1, 1, 0, 1, 0, 0],
 11: [1, 1, 0, 1, 0, 1]}
\end{sagecommandline}
\end{sagecommandlinetcb}
    \begin{center}
        \begin{tabular}{cc}
            the starting partition & the induced partition\\
            \texttt{P.plot()} & \texttt{induced\_partition.plot()}\\
        \sageplot[width=.45\linewidth][pdf]{P.plot()}
        & \sageplot[width=.45\linewidth][pdf]{induced_partition.plot()}
        \end{tabular}
    \end{center}
\end{exosolution}

\begin{exosolution}{\ref{exo:XPZRZ-domino-language}}
\begin{sagecommandlinetcb}
\begin{sagecommandline}
sage: z = polygen(QQ, "z")
sage: K.<phi> = NumberField(z**2-z-1, "phi", embedding=RR(1.6))
sage: from slabbe import PolyhedronExchangeTransformation as PET
sage: Gamma0 = matrix.column([(1,0), (0,1)])
sage: RZe1 = PET.toral_translation(Gamma0, vector((phi^-2,0)))
sage: RZe2 = PET.toral_translation(Gamma0, vector((0,phi^-2)))
sage: from slabbe.arXiv_1903_06137 import self_similar_19_atoms_partition
sage: PU = self_similar_19_atoms_partition()
sage: merge_dict = {0:0, 1:1, 2:2, 3:3, 4:4, 5:5, 6:6, 7:6, 8:7, 9:8, 10:9, 
....:               11:10, 12:11, 13:11, 14:12, 15:12, 16:13, 17:14, 18:15}
sage: PZ = PU.merge_atoms(merge_dict)
sage: from slabbe import PETsCoding
sage: X_PZ_RZ = PETsCoding((RZe1,RZe2), PZ)
sage: _,d22 = X_PZ_RZ.partition_for_patterns((2,2))
sage: XPZRZ_2x2 = [[list(col) for col in v] for v in d22.values()]
sage: XPZRZ_2x2
[[[0, 7], [3, 13]], [[1, 7], [2, 13]], [[1, 7], [3, 13]], [[1, 8], [6, 12]], [[1, 10], [6, 14]], [[2, 13], [0, 7]], [[2, 13], [4, 11]], [[3, 13], [6, 12]], [[4, 11], [1, 8]], [[5, 11], [1, 8]], [[6, 12], [1, 7]], [[6, 12], [1, 10]], [[6, 12], [5, 11]], [[6, 14], [1, 7]], [[6, 14], [5, 11]], [[7, 0], [13, 3]], [[7, 8], [13, 12]], [[7, 10], [13, 14]], [[8, 1], [12, 2]], [[8, 1], [12, 6]], [[8, 9], [12, 11]], [[9, 1], [11, 6]], [[9, 1], [12, 6]], [[10, 1], [14, 3]], [[10, 9], [14, 12]], [[11, 4], [8, 1]], [[11, 6], [8, 1]], [[11, 15], [8, 9]], [[12, 2], [7, 0]], [[12, 2], [11, 4]], [[12, 6], [10, 1]], [[12, 6], [15, 5]], [[12, 11], [7, 8]], [[12, 15], [10, 9]], [[13, 3], [11, 6]], [[13, 3], [12, 6]], [[13, 12], [7, 10]], [[13, 12], [11, 15]], [[13, 12], [12, 11]], [[13, 12], [12, 15]], [[13, 14], [12, 11]], [[14, 3], [11, 6]], [[14, 12], [7, 10]], [[14, 12], [11, 15]], [[15, 5], [9, 1]]]
sage: _,d12 = X_PZ_RZ.partition_for_patterns((1,2))
sage: XPZRZ_1x2 = [[list(col) for col in v] for v in d12.values()]
sage: XPZRZ_1x2
[[[0, 7]], [[1, 7]], [[1, 8]], [[1, 10]], [[2, 13]], [[3, 13]], [[4, 11]], [[5, 11]], [[6, 12]], [[6, 14]], [[7, 0]], [[7, 8]], [[7, 10]], [[8, 1]], [[8, 9]], [[9, 1]], [[10, 1]], [[10, 9]], [[11, 4]], [[11, 6]], [[11, 15]], [[12, 2]], [[12, 6]], [[12, 11]], [[12, 15]], [[13, 3]], [[13, 12]], [[13, 14]], [[14, 3]], [[14, 12]], [[15, 5]]]
sage: _,d21 = X_PZ_RZ.partition_for_patterns((2,1))
sage: XPZRZ_2x1 = [[list(col) for col in v] for v in d21.values()]
sage: XPZRZ_2x1
[[[0], [3]], [[1], [2]], [[1], [3]], [[1], [6]], [[2], [0]], [[2], [4]], [[3], [6]], [[4], [1]], [[5], [1]], [[6], [1]], [[6], [5]], [[7], [13]], [[8], [12]], [[9], [11]], [[9], [12]], [[10], [14]], [[11], [8]], [[12], [7]], [[12], [10]], [[12], [11]], [[12], [15]], [[13], [7]], [[13], [11]], [[13], [12]], [[14], [7]], [[14], [11]], [[15], [9]]]
\end{sagecommandline}
\end{sagecommandlinetcb}
\end{exosolution}

    \begin{exosolution}{\ref{exo:XPZRZ=XPhi}}
        In Theorem~\ref{thm:main-theoremB}, we proved that the symbolic dynamical system
        $\Xcal_{\Pcal_\Zcal,R_\Zcal}\subset\Zrange{15}^{\Z^2}$ is self-similar satisfying
        $\Xcal_{\Pcal_\Zcal,R_\Zcal}=\shiftclosure{\Phi(\Xcal_{\Pcal_\Zcal,R_\Zcal})}$.
        Thus, we may use the criterion given in Lemma~\ref{lem:criterion-for-minimality}.

        In Exercise~\ref{exo:XPZRZ-domino-language}, 
        we computed
        $\Lcal_{1\times 2}(\Xcal_{\Pcal_\Zcal,R_\Zcal})$,
        $\Lcal_{2\times 1}(\Xcal_{\Pcal_\Zcal,R_\Zcal})$ and
        $\Lcal_{2\times 2}(\Xcal_{\Pcal_\Zcal,R_\Zcal})$.

        Comparing with 
        Exercise~\ref{exo:HV-dominoes} and
        Exercise~\ref{exo:list-45-2x2-elements-in-substitutive-shift},
        we observed that
        \begin{align*}
            \Lcal_{1\times 2}(\Xcal_{\Pcal_\Zcal,R_\Zcal})&=\Lcal_{1\times 2}(\Xcal_\Phi),\\
            \Lcal_{2\times 1}(\Xcal_{\Pcal_\Zcal,R_\Zcal})&=\Lcal_{2\times 1}(\Xcal_\Phi),\\
            \Lcal_{2\times 2}(\Xcal_{\Pcal_\Zcal,R_\Zcal})&=\Lcal_{2\times 2}(\Xcal_\Phi).
        \end{align*}
\begin{sagecommandlinetcb}
\begin{sagecommandline}
sage: sorted(XPhi_2x1) == sorted(XPZRZ_2x1)
True
sage: sorted(XPhi_1x2) == sorted(XPZRZ_1x2) 
True
sage: sorted(XPhi_2x2) == sorted(XPZRZ_2x2)
True
\end{sagecommandline}
\end{sagecommandlinetcb}
We conclude from Lemma~\ref{lem:criterion-for-minimality},
that $\Xcal_{\Pcal_\Zcal,R_\Zcal}$ is minimal and $\Xcal_{\Pcal_\Zcal,R_\Zcal}=\Xcal_\Phi$.
\end{exosolution}

\begin{exosolution}{\ref{exo:induction-first-horizontal}}
    Left to the reader.
\end{exosolution}

\begin{exosolution}{\ref{exo:Markof-Partition}}
    Since $\Omega_\Zcal =\Xcal_\Phi =\Xcal_{\Pcal_\Zcal,R_\Zcal}$
    and $\Omega_\Zcal$ is a subshift of finite type,
    it follows that $\Pcal_\Zcal$ is a Markov partition
    for the dynamical system $(\torus^2,\Z^2,R_\Zcal)$.
\end{exosolution}

\begin{exosolution}{\ref{exo:equality-of-substitutions}}
    Left to the reader.
\end{exosolution}

\begin{exosolution}{\ref{exo:further-simplification}}
    Left to the reader.
\end{exosolution}

%%%%%%%%%%%%%%%%
% Bibliographie %
%%%%%%%%%%%%%%%%%
%\bibliographystyle{plain} %numeros
%\bibliographystyle{alpha} %author+year
\bibliographystyle{myalpha} %initials for first names
%{\footnotesize
\bibliography{bib_Labbe}
%}

\end{document}